\documentclass[a4paper,fleqn,11pt]{article}

\usepackage[a4paper,top=3cm, bottom=3cm, left=3cm, right=3cm]{geometry}
\usepackage[shortlabels]{enumitem}
\usepackage[pdftex,ocgcolorlinks]{hyperref}
\usepackage[affil-it]{authblk}

\usepackage{amsmath}
\usepackage{amsthm}
\usepackage{amssymb}
\usepackage{accents}
\usepackage{cleveref}

\bibliographystyle{alphaurl}

\theoremstyle{plain}
\newtheorem{theorem}{Theorem}[section]
\newtheorem{lemma}[theorem]{Lemma}
\newtheorem{proposition}[theorem]{Proposition}
\newtheorem{corollary}[theorem]{Corollary}

\newtheorem{definition}[theorem]{Definition}

\crefname{enumi}{property}{properties}

\newcommand{\entropy}{H}
\newcommand{\mutualinformation}{I}
\newcommand{\distributions}[1][]{\mathcal{P}_{#1}}
\newcommand{\typeclass}[2]{T^{#1}_{#2}}
\newcommand{\norm}[2][]{\left\|#2\right\|_{#1}}
\newcommand{\ball}[2]{B_{#1}(#2)}
\DeclareMathOperator{\Aut}{Aut}
\DeclareMathOperator{\support}{supp}
\DeclareMathOperator{\probability}{Pr}
\newcommand{\setbuild}[2]{\left\{#1\middle|#2\right\}}

\newcommand{\strongproduct}{\boxtimes}
\newcommand{\bigstrongproduct}{\sideset{}{^\boxtimes}\prod}
\newcommand{\costrongproduct}{*}
\newcommand{\lexproduct}{\circ}
\newcommand{\graphs}{\mathcal{G}}
\newcommand{\probgraphs}{\mathcal{G}_{\textnormal{prob}}}
\newcommand{\typegraph}[3]{#1^{\strongproduct #2}[\typeclass{#2}{#3}]} % G,n,P
\DeclareMathOperator{\ev}{ev}
\renewcommand{\complement}[1]{\overline{#1}}
\newcommand{\thetabody}{\mathrm{TH}}
\newcommand{\vertexpackingpolytope}{\mathrm{VP}}
\newcommand{\fractionalvertexpackingpolytope}{\mathrm{FVP}}

\title{Probabilistic refinement of the asymptotic spectrum of graphs}
\author[1,2]{P\'eter Vrana}
\affil[1]{Institute of Mathematics, Budapest University of Technology and Economics, Egry~J\'ozsef u.~1., Budapest, 1111 Hungary.}
\affil[2]{MTA-BME Lend\"ulet Quantum Information Theory Research Group}

\begin{document}
\maketitle
\begin{abstract}
The asymptotic spectrum of graphs, introduced by Zuiddam (arXiv:1807.00169, 2018), is the space of graph parameters that are additive under disjoint union, multiplicative under the strong product, normalized and monotone under homomorphisms between the complements. He used it to obtain a dual characterization of the Shannon capacity of graphs as the minimum of the evaluation function over the asymptotic spectrum and noted that several known upper bounds, including the Lov\'asz number and the fractional Haemers bounds are in fact elements of the asymptotic spectrum (spectral points).

We show that every spectral point admits a probabilistic refinement and characterize the functions arising in this way. This reveals that the asymptotic spectrum can be parameterized with a convex set and the evaluation function at every graph is logarithmically convex. One consequence is that for any incomparable pair of spectral points $f$ and $g$ there exists a third one $h$ and a graph $G$ such that $h(G)<\min\{f(G),g(G)\}$, thus $h$ gives a better upper bound on the Shannon capacity of $G$. In addition, we show that the (logarithmic) probabilistic refinement of a spectral point on a fixed graph is the entropy function associated with a convex corner.
\end{abstract}

\section{Introduction}

The Shannon capacity of a graph $G$ is \cite{shannon1956zero}
\begin{equation}
\Theta(G)=\lim_{n\to\infty}\sqrt[n]{\alpha(G^{\strongproduct n})},
\end{equation}
where $\alpha$ denotes the independence number and $G^{\strongproduct n}$ is the $n$th strong power (see \Cref{sec:preliminaries} for definitions). In the context of information theory, the optimal rate of zero-error communication over a noisy classical channel is equal to the Shannon capacity of its confusability graph.

In \cite{zuiddam2018asymptotic} Zuiddam introduced the asymptotic spectrum of graphs as follows. Let $\graphs$ denote the set of isomorphism classes of finite undirected simple graphs. The asymptotic spectrum of graphs $\Delta(\graphs)$ is the set of functions $f:\graphs\to\mathbb{R}_{\ge 0}$ which satisfy for all $G,H\in\graphs$
\begin{enumerate}[({S}1)]
\item\label{it:spectraladditive} $f(G\sqcup H)=f(G)+f(H)$ (additive under disjoint union)
\item\label{it:spectralmultiplicative} $f(G\strongproduct H)=f(G)f(H)$ (multiplicative under the strong product)
\item\label{it:spectralmonotone} if there is a homomorphism $\complement{H}\to\complement{G}$ between the complements then $f(H)\le f(G)$
\item\label{it:spectralnormalised} $f(K_1)=1$.
\end{enumerate}
Elements of $\Delta(\graphs)$ are also called spectral points. Using the theory of asymptotic spectra, developed by Strassen in \cite{strassen1988asymptotic}, he found the following characterization of the Shannon capacity:
\begin{equation}\label{eq:minDelta}
\Theta(G)=\min_{f\in\Delta(\graphs)}f(G).
\end{equation}
A number of well-studied graph parameters turn out to be spectral points: the Lov\'asz theta number $\vartheta(G)$ \cite{lovasz1979shannon}, the fractional clique cover number $\complement{\chi}_f(G)$, the complement of the projective rank $\complement{\xi}_f(G)=\xi_f(\complement{G})$ \cite{cubitt2014bounds}, and the fractional Haemers bound $\mathcal{H}^\mathbb{F}_f(G)$ over any field $\mathbb{F}$ \cite{haemers1978upper,blasiak2013graph,bukh2018fractional}. The latter gives rise to an infinite family of distinct points. $\complement{\chi}_f(G)$ is also the maximum of the spectral points. In fact, both this and \cref{eq:minDelta} remains true if we allow optimization over the larger set of functions subject only to \cref{it:spectralmultiplicative,it:spectralmonotone,it:spectralnormalised} \cite[8.1. Example]{fritz2017resource}.

In \cite{csiszar1981capacity} Csisz\'ar and K\"orner introduced a refinement of the Shannon capacity, imposing that the independent set consists of sequences with the same frequency for each vertex of $G$, in the limit approaching a prescribed probability distribution $P$ on the vertex set $V(G)$. Their definition is equivalent to
\begin{equation}\label{eq:constantcompositioncapacity}
\Theta(G,P)=\lim_{\epsilon\to 0}\limsup_{n\to\infty}\sqrt[n]{\alpha(\typegraph{G}{n}{\ball{\epsilon}{P}})},
\end{equation}
where $\typeclass{n}{\ball{\epsilon}{P}}\subseteq V(G)^n$ is the set of those sequences whose type (empirical distribution) is $\epsilon$-close to $P$ and $\typegraph{G}{n}{\ball{\epsilon}{P}}$ is the subgraph induced by this subset. Some properties are more conveniently expressed in terms of $C(G,P)=\log\Theta(G,P)$, which is also called the Shannon capacity.

In information theory, the independent sets in a type class are constant composition codes for zero-error communication, while similar notions in graph theory are sometimes called probabilistic refinements or ``within a type'' versions. To avoid proliferation of notations, we adopt the convention that graph parameters and their probabilistic refinements (defined using strong products) are denoted with the same symbol, even if alternative notation is in use elsewhere.

The aim of this paper is to gain a better understanding of $\Delta(\graphs)$ by studying the probabilistic refinements of spectral points, focusing on those properties which follow from \cref{it:spectraladditive,it:spectralmultiplicative,it:spectralmonotone,it:spectralnormalised} and thus are shared by all of them. Some of these properties were already known to be true for specific ones. 

\subsection{Results}

Before stating the main results we introduce some terminology. A probabilistic graph $(G,P)$ is a nonempty graph $G$ together with a probability measure $P$ on $V(G)$ (notation: $P\in\distributions(V(G))$). Two probabilistic graphs are isomorphic if there is an isomorphism between the underlying graphs that is measure preserving. Let $\probgraphs$ denote the set of isomorphism classes of probabilistic graphs.
\begin{theorem}\label{thm:spectralpointtoprobabilistic}
Let $f\in\Delta(\graphs)$. Then for any probabilistic graph $(G,P)$ the
limit
\begin{equation}
f(G,P):=\lim_{\epsilon\to 0}\limsup_{n\to\infty}\sqrt[n]{f(\typegraph{G}{n}{\ball{\epsilon}{P}})}
\end{equation}
exists. Consider $F(G,P)=\log f(G,P)$ as a function $F:\probgraphs\to\mathbb{R}$. It satisfies the following properties: 
\begin{enumerate}[({P}1)]
\item\label{it:probconcave} for any graph $G$ the map $P\mapsto F(G,P)$ is concave
\item\label{it:probmultiplicative} if $G,H$ are graphs and $P\in\distributions(V(G)\times V(H))$ then
\begin{equation}
F(G\strongproduct H,P)\le F(G,P_G)+F(H,P_H)\le F(G\strongproduct H,P)+\mutualinformation(G:H)_P
\end{equation}
where $P_G$, $P_H$ denote the marginals of $P$ on $V(G)$ and $V(H)$, $\mutualinformation(G:H)_P=\entropy(P_G)+\entropy(P_H)-\entropy(P)$ is the mutual information with $\entropy(P)$ the Shannon entropy
\item\label{it:probadditive} if $G,H$ are graphs, $P_G\in\distributions(V(G))$, $P_H\in\distributions(V(H))$ and $p\in[0,1]$ then
\begin{equation}
F(G\sqcup H,pP_G\oplus(1-p)P_H)=pF(G,P_G)+(1-p)F(H,P_H)+h(p)
\end{equation}
where $h(p)=-p\log p-(1-p)\log(1-p)$
\item\label{it:probmonotone} if $\varphi:\complement{H}\to\complement{G}$ is a homomorphism and $P\in\distributions(V(H))$ then $F(H,P)\le F(G,\varphi_*(P))$
\end{enumerate}
and $f$ can be recovered as $f(G)=\max_{P\in\distributions(V(G))}f(G,P)$.
\end{theorem}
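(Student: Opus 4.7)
\emph{Existence of the limit.} My first step would be to invoke Fekete's lemma. The key observation is that if $x \in \typeclass{n_1}{\ball{\epsilon}{P}}$ and $y \in \typeclass{n_2}{\ball{\epsilon}{P}}$, then their concatenation has empirical distribution in $\ball{\epsilon}{P}$ (being a convex combination), giving $\typeclass{n_1}{\ball{\epsilon}{P}} \times \typeclass{n_2}{\ball{\epsilon}{P}} \subseteq \typeclass{n_1+n_2}{\ball{\epsilon}{P}}$. Together with \cref{it:spectralmultiplicative,it:spectralmonotone} this makes $n \mapsto \log f(\typegraph{G}{n}{\ball{\epsilon}{P}})$ superadditive, so Fekete's lemma produces the inner limit as $n\to\infty$. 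Since $\typeclass{n}{\ball{\epsilon}{P}}$ shrinks as $\epsilon \to 0$, monotonicity yields the outer limit as an infimum.

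\emph{The easy properties.} Concavity \cref{it:probconcave} would follow from the same concatenation trick, used now with two rational $P_1, P_2$ and $\lambda = n_1/(n_1+n_2)$. For monotonicity \cref{it:probmonotone}, the component-wise map $\varphi^n:V(H)^n\to V(G)^n$ sends $\typeclass{n}{P}$ into $\typeclass{n}{\varphi_*(P)}$, and I would verify that it gives a homomorphism $\complement{H^{\strongproduct n}[\typeclass{n}{P}]}\to \complement{G^{\strongproduct n}[\typeclass{n}{\varphi_*(P)}]}$: a non-adjacency of distinct $x,y\in\typeclass{n}{P}$ witnessed at position $i$ means $x_i,y_i$ adjacent in $\complement{H}$, hence $\varphi(x_i),\varphi(y_i)$ adjacent in $\complement{G}$, which witnesses both distinctness and non-adjacency of the images in $G^{\strongproduct n}$. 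For additivity \cref{it:probadditive}, the structure of $(G\sqcup H)^{\strongproduct n}$ forces two sequences with differing $V(G)/V(H)$ support patterns to be non-adjacent, so the induced subgraph on the type class of $pP_G\oplus(1-p)P_H$ splits as a disjoint union indexed by subsets $S\subseteq[n]$ with $|S|\approx np$, each piece being a strong product of type-class subgraphs of $G^{\strongproduct |S|}$ and $H^{\strongproduct (n-|S|)}$. Additivity, multiplicativity, and Stirling's estimate for $\binom{n}{\lfloor np\rfloor}$ deliver the $h(p)$ term.

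\emph{Property \cref{it:probmultiplicative} --- the main obstacle.} The first inequality is immediate from $\typeclass{n}{\ball{\epsilon}{P}} \subseteq \typeclass{n}{\ball{\epsilon'}{P_G}} \times \typeclass{n}{\ball{\epsilon'}{P_H}}$ combined with multiplicativity. The second inequality is the hard part. My plan is a covering argument: the group $S_n$ acts on $V(G)^n\times V(H)^n$ by permuting only the second coordinate, which gives automorphisms of $(G\strongproduct H)^{\strongproduct n}$; each translate $(\mathrm{id}\times\pi)\cdot\typeclass{n}{P}$ sits inside $\typeclass{n}{P_G}\times\typeclass{n}{P_H}$, and a Hall/transportation argument shows they cover it. For uniform random $\pi$ a fixed pair is covered with probability $|\typeclass{n}{P}|/(|\typeclass{n}{P_G}|\,|\typeclass{n}{P_H}|)=2^{-n\mutualinformation(G:H)_P - o(n)}$, so a greedy or probabilistic selection extracts a cover of size $M\le 2^{n\mutualinformation(G:H)_P + o(n)}$. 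To convert the cover into a bound on $f$, I would use the following general observation: if $U=\bigcup_i A_i \subseteq V(\Gamma)$ for any graph $\Gamma$, then any section $U\hookrightarrow\bigsqcup_i A_i$ is a homomorphism $\complement{\Gamma[U]}\to\complement{\bigsqcup_i \Gamma[A_i]}$, because two non-adjacent vertices of $U$ remain non-adjacent in $\bigsqcup_i\Gamma[A_i]$ whether their sections land in the same copy (non-adjacent inside $\Gamma[A_i]$) or in distinct copies (no edges across the disjoint union). By \cref{it:spectralmonotone,it:spectraladditive} this yields $f((G\strongproduct H)^{\strongproduct n}[\typeclass{n}{P_G}\times\typeclass{n}{P_H}])\le M\cdot f((G\strongproduct H)^{\strongproduct n}[\typeclass{n}{P}])$, and taking $\tfrac1n\log$ delivers the desired bound.

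\emph{Recovery.} The inequality $f(G,P)\le f(G)$ follows from $\typeclass{n}{\ball{\epsilon}{P}}\subseteq V(G)^n$. For the reverse direction I would decompose $V(G)^n=\bigsqcup_P \typeclass{n}{P}$ over types of denominator $n$, apply the section lemma above to obtain $f(G)^n\le\sum_P f(\typegraph{G}{n}{P})$, and use that the number of types is polynomial in $n$ to pick some $P_n$ with $f(\typegraph{G}{n}{P_n})\ge f(G)^n/\mathrm{poly}(n)$. Compactness of $\distributions(V(G))$ combined with upper semicontinuity of $P\mapsto F(G,P)$ (a consequence of \cref{it:probconcave}) then produces a limit $P_\infty$ satisfying $\log f(G) \le F(G,P_\infty)$, completing the identity.
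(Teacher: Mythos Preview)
Your proposal follows essentially the same route as the paper: Fekete's lemma for existence, concatenation for \cref{it:probconcave}, a probabilistic covering argument for the hard inequality in \cref{it:probmultiplicative} (your random-translate cover plus the section lemma is exactly the paper's \Cref{lem:transitiveinduced} specialised to the vertex-transitive graph $\typegraph{G}{n}{P_G}\strongproduct\typegraph{H}{n}{P_H}$ with subset $\typeclass{n}{P}$, noting $\complement{K_N}\strongproduct H[S]\cong\bigsqcup_{i=1}^N H[S]$), the structural decomposition of the disjoint-union type class for \cref{it:probadditive}, and the coordinatewise homomorphism for \cref{it:probmonotone}. The paper organises things slightly differently---it first treats exact rational types and extends by a Fannes-type continuity estimate, and it derives \cref{it:probadditive} as a special case of a general $G$-join formula---but the substance is the same.

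There is one genuine slip. In the recovery step you claim that upper semicontinuity of $P\mapsto F(G,P)$ is a consequence of \cref{it:probconcave}. This is false in general: a finite concave function on the simplex is continuous on the relative interior and \emph{lower} semicontinuous along segments into boundary points, but it can drop at the boundary (on $[0,1]$ take $F\equiv 0$ on $[0,1)$ and $F(1)=-1$; this is concave but not upper semicontinuous at $1$). The paper closes this gap by also proving that $P\mapsto \entropy(P)-F(G,P)$ is concave---via the same covering lemma, now applied to the vertex-transitive graph $\typegraph{G}{(m+n)}{\frac{mP+nQ}{m+n}}$ with subset $\typeclass{m}{P}\times\typeclass{n}{Q}$---and then combining the two concavities into an explicit uniform continuity bound. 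Alternatively, you can sidestep semicontinuity altogether: once $P_{n_k}\to P_\infty$, the inclusion $\typeclass{n_k}{P_{n_k}}\subseteq\typeclass{n_k}{\ball{\epsilon}{P_\infty}}$ for large $k$ plugs directly into your $\epsilon$-ball definition of $F(G,P_\infty)$ and gives $F(G,P_\infty)\ge\log f(G)$ without any appeal to regularity of $F$.
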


Unsurprisingly, it turns out that the following counterpart of \cref{eq:minDelta} for probabilistic graphs is true:
\begin{equation}\label{eq:minDeltaprobabilistic}
\Theta(G,P)=\min_{f\in\Delta(\graphs)}f(G,P).
\end{equation}

We prove the following converse to \Cref{thm:spectralpointtoprobabilistic}.
\begin{theorem}\label{thm:probabilistictospectralpoint}
Let $F:\probgraphs\to\mathbb{R}$ be a map satisfying \cref{it:probconcave,it:probmultiplicative,it:probadditive,it:probmonotone}. Consider the function $f:\graphs\to\mathbb{R}_{\ge0}$
\begin{equation}
f(G)=\begin{cases}
\max_{P\in\distributions(V(G))}2^{F(G,P)} & \text{if $G$ is nonempty}  \\
0 & \text{if $G$ is empty.}
\end{cases}
\end{equation}
Then $f\in\Delta(\graphs)$ and its logarithmic probabilistic refinement is $F$.
\end{theorem}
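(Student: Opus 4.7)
The plan is to verify the four axioms \cref{it:spectraladditive,it:spectralmultiplicative,it:spectralmonotone,it:spectralnormalised} for $f$, and then separately establish $F^f=F$, where $F^f$ denotes the logarithmic probabilistic refinement of $f$. Three of the axioms descend from the corresponding probabilistic properties after taking maxima: \cref{it:spectralmonotone} from \cref{it:probmonotone}; \cref{it:spectralmultiplicative} from \cref{it:probmultiplicative} (the left inequality bounds $F(G\strongproduct H,P)$ by $\log f(G)+\log f(H)$, while evaluation at a product distribution, where the mutual information vanishes, gives the matching lower bound); and the normalization \cref{it:spectralnormalised} from \cref{it:probmultiplicative} applied to $G\strongproduct K_1\cong G$, which forces $F(K_1,\delta)=0$ and hence $f(K_1)=1$. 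For additivity \cref{it:spectraladditive}, \cref{it:probadditive} reduces the claim to the elementary identity $\max_{p\in[0,1]}\bigl(f(G)^pf(H)^{1-p}2^{h(p)}\bigr)=f(G)+f(H)$, attained at $p=f(G)/(f(G)+f(H))$.

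Rewriting $F^f(G,P)=\lim_{\epsilon\to0}\limsup_n\max_Q\tfrac{1}{n}F(\typegraph{G}{n}{\ball{\epsilon}{P}},Q)$, I first establish the upper bound $F^f(G,P)\le F(G,P)$. Any distribution $Q$ on $V(\typegraph{G}{n}{\ball{\epsilon}{P}})$ can be pushed forward via the vertex inclusion $\iota\colon V(\typegraph{G}{n}{\ball{\epsilon}{P}})\hookrightarrow V(G^{\strongproduct n})$, which is a homomorphism $\complement{\typegraph{G}{n}{\ball{\epsilon}{P}}}\to\complement{G^{\strongproduct n}}$, so \cref{it:probmonotone} gives $F(\typegraph{G}{n}{\ball{\epsilon}{P}},Q)\le F(G^{\strongproduct n},\iota_*Q)$. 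Iterating the left inequality of \cref{it:probmultiplicative} along the $n$ tensor factors and then applying concavity \cref{it:probconcave}, this is at most $nF(G,\bar Q)$, where $\bar Q$ is the average single-factor marginal of $\iota_*Q$. Because the support of $\iota_*Q$ lies in the $\epsilon$-type class, $\bar Q\in\ball{\epsilon}{P}$, and upper semi-continuity of the real-valued concave function $F(G,\cdot)$ yields the bound upon taking $\limsup_n$ and $\epsilon\to0$.

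For the lower bound $F^f(G,P)\ge F(G,P)$, I first fix a rational $P$ of full support (so that $nP\in\mathbb{Z}^{V(G)}$ for $n$ in an arithmetic progression), let $H_n=\typegraph{G}{n}{P}$, and let $U_n$ be uniform on $V(H_n)=\typeclass{n}{P}$. The identity on $V(G)^n$ is a homomorphism $\complement{G^{\strongproduct n}}\to\complement{\bigsqcup_{[P']}\typegraph{G}{n}{P'}}$, since the strong product contains every edge of the disjoint union. Pushing $U_n$ forward places all its mass on the $P$-component, so \cref{it:probmonotone} combined with iterated \cref{it:probadditive} yields $F(G^{\strongproduct n},U_n)\le F(H_n,U_n)$. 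On the other hand, iterating the right inequality of \cref{it:probmultiplicative} gives $F(G^{\strongproduct n},U_n)\ge\sum_{i=1}^nF(G,(U_n)_i)-I$, where $I$ telescopes into the total correlation $\sum_i\entropy((U_n)_i)-\entropy(U_n)=n\entropy(P)-\log|\typeclass{n}{P}|$. Exchangeability of $U_n$ forces $(U_n)_i=P$, and the standard type-counting estimate $|\typeclass{n}{P}|\ge(n+1)^{-|V(G)|}2^{n\entropy(P)}$ bounds $I=O(\log n)$. Therefore $\log f(H_n)\ge F(H_n,U_n)\ge nF(G,P)-O(\log n)$, and since $H_n$ is an induced subgraph of $\typegraph{G}{n}{\ball{\epsilon}{P}}$, this yields $F^f(G,P)\ge F(G,P)$. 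For irrational or non-full-support $P$, the perturbation $P_\alpha=(1-\alpha)P+\alpha U_{V(G)}$ is full-support; concavity \cref{it:probconcave} gives $F(G,P_\alpha)\ge(1-\alpha)F(G,P)+\alpha F(G,U_{V(G)})$, which together with upper semi-continuity forces $F(G,P_\alpha)\to F(G,P)$ as $\alpha\to0$, and approximating $P_\alpha$ by rationals inside $\ball{\epsilon}{P}$ combined with the monotonicity $f(\typegraph{G}{n}{P'})\le f(\typegraph{G}{n}{\ball{\epsilon}{P}})$ under type-class enlargement transfers the bound.

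The main obstacle is the lower bound calculation: one must identify the error from iterating \cref{it:probmultiplicative} as the total correlation of the uniform distribution on a type class, and then absorb it into $o(n)$ via the type-counting lemma. The two-sided sandwich $F(G^{\strongproduct n},U_n)\le F(H_n,U_n)$ via the disjoint-union complement-homomorphism is the conceptual crux, since it reverses the obvious subgraph inequality. The extension to arbitrary $P$ is a secondary technical point handled by the concavity plus upper semi-continuity of $F(G,\cdot)$ and the monotonicity of $F^f$ in the type-class radius.
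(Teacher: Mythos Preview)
Your proof is correct and follows essentially the same strategy as the paper: the four spectral axioms are derived from \cref{it:probconcave,it:probmultiplicative,it:probadditive,it:probmonotone} exactly as in the paper, and for $F^f=F$ you sandwich $nF(G,P)$ between $F(G^{\strongproduct n},U_n)$ and $F(G^{\strongproduct n},U_n)+(n\entropy(P)-\entropy(U_n))$ via iterated \cref{it:probmultiplicative}, which is precisely the paper's display \eqref{eq:probmultiplicativetypeclass}. The one place you are \emph{more} careful than the paper is in explicitly justifying the inequality $F(G^{\strongproduct n},U_n)\le F(\typegraph{G}{n}{P},U_n)$ via the complement-homomorphism $\complement{G^{\strongproduct n}}\to\complement{\bigsqcup_{P'}\typegraph{G}{n}{P'}}$ together with \cref{it:probadditive} at $p=1$; the paper silently identifies $F(\typegraph{G}{km}{P},U)$ with $F(G^{\strongproduct km},U)$. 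One small point: your appeal to upper semi-continuity of $F(G,\cdot)$ at boundary distributions is not automatic from concavity alone, but it does follow from \cref{it:probmonotone} and \cref{it:probadditive} by comparing $G$ with $G[\support P]\sqcup G[V(G)\setminus\support P]$ and using interior continuity on the smaller simplex; the paper has the same tacit reliance on continuity of $F$ when passing from rational $P$ to general $P$.
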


\Cref{thm:spectralpointtoprobabilistic,thm:probabilistictospectralpoint} set up a bijection between $\Delta(\graphs)$ and the set of functions satisfying \cref{it:probconcave,it:probmultiplicative,it:probadditive,it:probmonotone}. The inequalities defining the latter are affine, therefore it is a convex subset of the space of all functions on $\probgraphs$. Translating back to functions on $\graphs$, it follows that e.g. the graph parameter
\begin{equation}\label{eq:LovaszHaemers}
\begin{split}
f_{1/2}(G)
 & = \max_{P\in\distributions(V(G))}\sqrt{\vartheta(G,P)\mathcal{H}^{\mathbb{F}_2}_f(G,P)}  \\
 & = \max_{P\in\distributions(V(G))}\lim_{\epsilon\to0}\lim_{n\to\infty}\sqrt[2n]{\vartheta(\typegraph{G}{n}{\ball{\epsilon}{P}})\mathcal{H}^{\mathbb{F}_2}_f(\typegraph{G}{n}{\ball{\epsilon}{P}})}
\end{split}
\end{equation}
is an element of $\Delta(\graphs)$. Moreover, the function $f\mapsto \log f(G)$ is the maximum of affine functions for any fixed graph $G$, therefore it is convex. This allows us to find examples of graphs where a combined function like in \cref{eq:LovaszHaemers} gives a strictly better bound than the two spectral points involved.

In addition, we prove analogues of some of the properties that were previously known for specific spectral points. These include subadditivity with respect to the intersection; the value on the join of two graphs; and a characterization of multiplicativity under the lexicographic product. We introduce for each spectral point a complementary function and find a characterization of the Witsenhausen rate \cite{witsenhausen1976zero} and of the complementary graph entropy \cite{korner1973two}.

The probabilistic refinement of the fractional clique cover number (also known as the graph entropy of the complement) is the entropy with respect to the vertex packing polytope \cite{csiszar1990entropy}. Similarly, the probabilistic refinement of the Lov\'asz number is also the entropy with respect to a convex corner \cite{marton1993shannon}, called the theta body \cite{grotschel1986relaxations}. We show that this property is shared by every spectral point, and give another characterization of the probabilistic refinements as the entropy functions associated with certain convex corner-valued functions on $\graphs$.

\subsection{Organization of this paper}

In \Cref{sec:preliminaries} we collect basic definitions and facts from graph theory and information theory, in particular those which are central to the method of types. \Cref{sec:probabilisticrefinement} contains the proof of \Cref{thm:spectralpointtoprobabilistic,thm:probabilistictospectralpoint}. In \Cref{sec:furtherproperties} we discuss a number of properties that have been known for specific spectral points and are true for all (or at least a large subset) of them. These include subadditivity under intersection of graphs with common vertex set and the behaviour under graph join and lexicographic product. We also put some notions related to graph entropy and complementary graph entropy into our more general context. In \Cref{sec:convexcorners} we connect our results to the theory of convex corners. 

\section{Preliminaries}\label{sec:preliminaries}

Every graph in this paper is assumed to be a finite simple undirected graph. The vertex set of a graph $G$ is $V(G)$ and its edge set is $E(G)\subseteq\binom{V(G)}{2}$. The complement $\complement{G}$ is the graph with the same vertex set and edge set $E(G)=\binom{V(G)}{2}\setminus E(G)$. Given a graph $G$ and a subset $S\subseteq V(G)$ the induced subgraph $G[S]$ is the graph with vertex set $V(G[S])=S$ and edge set $E(G[S])=E(G)\cap\binom{S}{2}$. We write $G\subseteq G'$ when $V(G)=V(G')$ and $E(G)\subseteq E(G')$. This is a partial order and the complement operation is order reversing. The complete graph on a set $S$ is $K_S$, for $S=[n]$ the notation is simplified to $K_n$.

For graphs $G$ and $H$ the disjoint union $G\sqcup H$ has vertex set $V(G)\sqcup V(H)$ and edge set $E(G)\sqcup E(H)$. The strong product $G\strongproduct H$ has vertex set $V(G)\times V(H)$ and $\{(g,h),(g',h')\}\in E(G\strongproduct H)$ iff ($\{g,g'\}\in E(G)$ or $g=g'$) and ($\{h,h'\}\in E(H)$ or $h=h'$) but $(g,h)\neq(g',h')$. The join and the costrong product are
\begin{align}
G+H & = \complement{\complement{G}\sqcup\complement{H}}  \\
G\costrongproduct H & = \complement{\complement{G}\strongproduct\complement{H}}.
\end{align}
We use the notation $G^{\strongproduct n}=G\strongproduct G\strongproduct\cdots\strongproduct G$ ($n$ operands), and similarly for other associative binary operations. The lexicographic product $G\lexproduct H$ has vertex set $V(G)\times V(H)$ and $\{(g,h),(g',h')\}\in E(G\strongproduct H)$ iff $\{g,g'\}\in E(G)$ or ($g=g'$ and $\{h,h'\}\in E(H)$). The lexicographic product satisfies $\complement{G\lexproduct H}=\complement{G}\lexproduct\complement{H}$ and the three types of products are ordered as $G\strongproduct H\subseteq G\lexproduct H\subseteq G\costrongproduct H$.

A graph homomorphism $\varphi:G\to H$ is a function $\varphi:V(G)\to V(H)$ such that $\{g,g'\}\in E(G)\implies\{\varphi(g),\varphi(g')\}\in E(H)$ for all $g,g'\in V(G)$. An isomorphism is a homomorphism which is a bijection between the vertex sets and its inverse is also a homomorphism. $G$ and $H$ are isomorphic if there is an isomorphism between them. The set of isomorphism classes of graphs is denoted by $\graphs$. The set of isomorphisms $G\to G$ is $\Aut(G)$. We write $H\le G$ if there is a homomorphism $\varphi:\complement{H}\to\complement{G}$. In particular, $G[S]\le G$ for any $S\subseteq V(G)$, because the inclusion of an induced subgraph is a homomorphism and passing to induced subgraphs commutes with complementation.

A probability distribution $P$ on a finite set $\mathcal{X}$ is a function $P:\mathcal{X}\to[0,1]$ satisfying $\sum_{x\in\mathcal{X}}P(x)=1$. The support of $P$ is $\support P=P^{-1}((0,1])$. For $n\in\mathbb{N}_{>0}$, $P$ is said to be an $n$-type if $nP(x)\in\mathbb{N}$ for all $x\in\mathcal{X}$. The set of probability distributions on $\mathcal{X}$ will be denoted by $\distributions(\mathcal{X})$, the set of $n$-types by $\distributions[n](\mathcal{X})$ and
\begin{equation}
\distributions[\mathbb{Q}](\mathcal{X})=\bigcup_{n=1}^\infty\distributions[n](\mathcal{X}).
\end{equation}
The latter is a dense subset of $\distributions(\mathcal{X})$, equipped with the subspace topology from the Euclidean space $\mathbb{R}^\mathcal{X}$. $\ball{\epsilon}{P}$ denotes the open $\epsilon$-ball in $\distributions(\mathcal{X})$ centered at $P$ with respect to the total variation distance.

For an $n$-type $P\in\distributions[n](\mathcal{X})$ the type class $\typeclass{n}{P}\subseteq\mathcal{X}^n$ is the set of strings in which $x$ occurs exactly $nP(x)$ times for all $x\in\mathcal{X}$. More generally, for a subset $\mathcal{U}\subseteq\distributions(\mathcal{X})$ we define
\begin{equation}
\typeclass{n}{\mathcal{U}}=\bigcup_{P\in\mathcal{U}\cap\distributions[n](\mathcal{X})}\typeclass{n}{P}.
\end{equation}
The number of type classes satisfies $|\distributions[n](\mathcal{X})|\le(n+1)^{|\mathcal{X}|}$ \cite[Lemma 2.2]{csiszar2011information}.

The (Shannon) entropy of a probability distribution $P\in\distributions(\mathcal{X})$ is
\begin{equation}
\entropy(P)=-\sum_{x\in\mathcal{X}}P(x)\log P(x),
\end{equation}
where $\log$ is to base $2$ and by convention $0\log0=0$ (justified by continuous extension). A special case is the entropy of a Bernoulli distribution, $h(p)=-p\log p-(1-p)\log(1-p)$. When $P\in\distributions[n](\mathcal{X})$ we have the cardinality estimates \cite[Lemma 2.3]{csiszar2011information}
\begin{equation}\label{eq:typeclasssize}
\frac{1}{(n+1)^{|\mathcal{X}|}}2^{n\entropy(P)}\le |\typeclass{n}{P}|\le 2^{n\entropy(P)}.
\end{equation}
The relative entropy between two Bernoulli distributions is $d(p\|q)=p\log\frac{p}{q}+(1-p)\log\frac{1-p}{1-q}$, which satisfies $d(p\|q)\ge 0$.

When $\mathcal{X}$ and $\mathcal{Y}$ are finite sets and $P\equiv P_{XY}\in\distributions(\mathcal{X}\times\mathcal{Y})$, the distributions $P_X\in\distributions(\mathcal{X})$ and $P_Y\in\distributions(\mathcal{Y})$ given by
\begin{align}
P_X(x) & = \sum_{y\in \mathcal{Y}}P(x,y)  &
P_Y(x) & = \sum_{x\in \mathcal{X}}P(x,y)
\end{align}
are called the marginals of $P$. The mutual information is $\mutualinformation(X:Y)_P=\entropy(P_X)+\entropy(P_Y)-\entropy(P)$. $P_X\otimes P_Y$ denotes the probability distribution on $\mathcal{X}\times\mathcal{Y}$ given by $(P_X\otimes P_Y)(x,y)=P_X(x)P_Y(y)$, while for $p\in[0,1]$, $pP_X\oplus (1-p)P_Y$ denotes the distribution on $\mathcal{X}\sqcup\mathcal{Y}$ defined as
\begin{equation}
(pP_X\oplus (1-p)P_Y)(x)=\begin{cases}
pP_X(x) & \text{if $x\in\mathcal{X}$}  \\
(1-p)P_Y(y) & \text{if $y\in\mathcal{Y}$.}
\end{cases}
\end{equation}

If $f:\mathcal{X}\to\mathcal{Y}$ is a function between finite sets and $P\in\distributions(\mathcal{X})$, then the pushforward is the distribution $f_*(P)\in\distributions(\mathcal{Y})$ defined as
\begin{equation}
f_*(P)(y)=\sum_{x\in f^{-1}(y)}P(x).
\end{equation}

The probabilistic refinement of a graph parameter $Q:\graphs\to\mathbb{R}_{\ge0}$ is $\displaystyle Q(G,P)=\lim_{\epsilon\to 0}\lim_{n\to\infty}\sqrt[n]{Q(\typegraph{G}{n}{\ball{\epsilon}{P}})}$ whenever the limit exists, where $G$ is a nonempty graph $G$ and $P\in\distributions(V(G))$. In particular, existence is guaranteed if $Q$ is $\strongproduct$-supermultiplicative and nonincreasing under taking induced subgraphs. In all the examples in this paper, when $P\in\distributions[n](V(G))$, this quantity is the same as $\displaystyle\lim_{k\to\infty}\sqrt[kn]{Q(\typegraph{G}{kn}{P})}$.

\section{Probabilistic refinement of spectral points}\label{sec:probabilisticrefinement}

In this section we prove \Cref{thm:spectralpointtoprobabilistic,thm:probabilistictospectralpoint}. We let $f$ be an arbitrary fixed element of $\Delta(\graphs)$, the same symbol is used for its probabilistic refinement and $F(G,P)=\log f(G,P)$.
\begin{lemma}\label{lem:transitiveinduced}
Let $H$ be a vertex-transitive graph and $S\subseteq V(H)$. Then $H\le\complement{K_N}\strongproduct H[S]$ with
\begin{equation}
N=\left\lfloor\frac{|V(H)|}{|S|}\ln|V(H)|\right\rfloor+1.
\end{equation}
\end{lemma}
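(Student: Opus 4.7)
The plan is to first unpack what the inequality $H\le\complement{K_N}\strongproduct H[S]$ really says, and then produce the required map using a union-bound/random-automorphism argument.

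First I would observe that $\complement{K_N}\strongproduct H[S]$ is nothing but the disjoint union of $N$ copies of $H[S]$: since $\complement{K_N}$ has no edges, the strong product definition forces the first coordinate to agree on any edge, so $\{(i,s),(j,t)\}$ is an edge iff $i=j$ and $\{s,t\}\in E(H[S])$. Taking complements, a homomorphism $\complement{H}\to\complement{\complement{K_N}\strongproduct H[S]}$ is exactly a map $\varphi:V(H)\to[N]\times S$ with the property that whenever $u\ne v$, $\{u,v\}\notin E(H)$, and $\varphi(u),\varphi(v)$ lie in the same layer $i$, the $S$-coordinates form a non-edge of $H[S]$. In short, restricted to each fiber $\varphi^{-1}(\{i\}\times S)$, the map $v\mapsto\varphi(v)_2$ should be a homomorphism of complements into $\complement{H[S]}$.

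The most natural way to produce such a map on a single layer is to take an automorphism $\pi\in\Aut(H)$ and, on the set $\pi^{-1}(S)$, define $\varphi(v)=(i,\pi(v))$. Automorphisms preserve non-edges, so this is automatically a valid ``layer''. The real task is to cover $V(H)$ with $N$ such layers, i.e.\ to find $\pi_1,\ldots,\pi_N\in\Aut(H)$ with $\bigcup_{i=1}^N\pi_i^{-1}(S)=V(H)$.

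For this I would use a standard probabilistic covering argument. Pick $\pi_1,\ldots,\pi_N$ independently and uniformly from $\Aut(H)$. By vertex-transitivity, for any fixed $v\in V(H)$ the image $\pi_i(v)$ is uniform on $V(H)$, so
\[
\probability[\pi_i(v)\notin S\text{ for all }i]=\left(1-\frac{|S|}{|V(H)|}\right)^N\le\exp\!\left(-\frac{N|S|}{|V(H)|}\right).
\]
The choice $N=\lfloor \tfrac{|V(H)|}{|S|}\ln|V(H)|\rfloor+1$ gives $N|S|/|V(H)|>\ln|V(H)|$, so this probability is strictly less than $1/|V(H)|$. A union bound over the $|V(H)|$ vertices then gives that the bad event ``some $v$ is missed by all $\pi_i^{-1}(S)$'' has probability strictly less than $1$, so a covering family $\pi_1,\ldots,\pi_N$ exists. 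Fixing one and, for each $v$, choosing any $i(v)$ with $\pi_{i(v)}(v)\in S$, the map $\varphi(v)=(i(v),\pi_{i(v)}(v))$ satisfies the required property.

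I don't foresee a serious obstacle: the combinatorial identification $\complement{K_N}\strongproduct H[S]=N\cdot H[S]$ and the transformation into a covering problem are bookkeeping, and the covering step is the textbook $(1-x)^N\le e^{-Nx}$ plus a union bound; the only thing to be a touch careful about is that vertex-transitivity makes $\pi_i(v)$ uniform on $V(H)$, which is exactly what is needed to get the probability $|S|/|V(H)|$.
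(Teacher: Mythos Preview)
Your proposal is correct and follows essentially the same approach as the paper: a probabilistic covering argument using independent uniform automorphisms, the bound $(1-x)^N\le e^{-Nx}$, and a union bound, followed by defining $\varphi$ via a choice of covering index. The paper phrases the construction slightly differently (it builds a surjection $m:[N]\times S\to V(H)$ via $m(i,u)=\pi_i^{-1}(u)$ and takes $\varphi$ to be a right inverse), but your map $\varphi(v)=(i(v),\pi_{i(v)}(v))$ is exactly such a right inverse, so the arguments coincide.
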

\begin{proof}
The proof is essentially a folklore argument. Draw $\pi_1,\ldots,\pi_N$ at random independently and uniformly from $\Aut(H)$. Define $m:[N]\times S\to V(H)$ as $m(i,u)=\pi_i^{-1}(u)$. For any $v\in V(H)$ and $i\in[N]$,
\begin{equation}
\probability[v\in m(\{i\}\times S)]=\probability[v\in \pi_i^{-1}(S)]=\probability[\pi_i(v)\in S]=\frac{|S|}{|V(H)|},
\end{equation}
because $\pi_i(v)$ is uniformly distributed on $V(H)$ by vertex-transitivity. For fixed $v$ and varying $i$ these events are independent, therefore
\begin{equation}
\begin{split}
\probability[\exists v\in V(H):v\notin m([N]\times S)]
 & \le |V(H)|
\left(1-\frac{|S|}{|V(H)|}\right)^N  \\
 & \le e^{-N\frac{|S|}{|V(H)|}+\ln|V(H)|}<1.
\end{split}
\end{equation}
Thus $m$ is surjective for some choice of the permutations. Fix such a choice and let $\varphi:V(H)\to[N]\times S$ be an arbitrary right inverse of $m$. Suppose that $u,v\in V(H)$ such that $\{u,v\}\notin E(H)$ and let $\varphi(u)=(i,u')$, $\varphi(v)=(j,v')$. If $i\neq j$ then $\{i,j\}$ is not an edge in $\complement{K_N}$, therefore $\{\varphi(u),\varphi(v)\}\notin E(\complement{K_N}\strongproduct H[S])$. Otherwise $\{u',v'\}=\{\pi_i(u),\pi_i(v)\}\notin E(H)$ since $\pi_i$ is an automorphism, therefore $\{\varphi(u),\varphi(v)\}\notin E(\complement{K_N}\strongproduct H[S])$. This proves that $\varphi:\complement{H}\to\complement{\complement{K_N}\strongproduct H[S]}$ is a homomorphism.
\end{proof}

\begin{lemma}\label{prop:convexcombinationbounds}
Let $G$ be a graph, $m,n\in\mathbb{N}$, $P\in\distributions[m](V(G))$ and $Q\in\distributions[n](V(G))$. Then
\begin{equation}
\typegraph{G}{m}{P}\strongproduct\typegraph{G}{n}{Q}\le\typegraph{G}{(m+n)}{\frac{mP+nQ}{m+n}}\le \complement{K_N}\strongproduct \typegraph{G}{m}{P}\strongproduct\typegraph{G}{n}{Q}
\end{equation}
for some $N\in\mathbb{N}$ satisfying
\begin{equation}
N\le(n+1)^{2|V(G)|}(m+1)^{2|V(G)|}2^{(n+m)\entropy(\frac{mP+nQ}{m+n})-m\entropy(P)-n\entropy(Q)}
\end{equation}
\end{lemma}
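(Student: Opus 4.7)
The plan is to establish the two inequalities separately, using concatenation of strings and the vertex-transitivity of type classes under coordinate permutations.

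For the first inequality I would identify the vertex set of $\typegraph{G}{m}{P}\strongproduct\typegraph{G}{n}{Q}$ with $\typeclass{m}{P}\times\typeclass{n}{Q}\subseteq V(G)^{m+n}$ via concatenation. A pair $(x,y)$ with $x$ of type $P$ and $y$ of type $Q$ concatenates to a length-$(m+n)$ string of type $\frac{mP+nQ}{m+n}$, so the image sits inside $V(\typegraph{G}{(m+n)}{\frac{mP+nQ}{m+n}})$. The observation that induced subgraphs distribute over strong products, namely $G^{\strongproduct(m+n)}[\typeclass{m}{P}\times\typeclass{n}{Q}]=G^{\strongproduct m}[\typeclass{m}{P}]\strongproduct G^{\strongproduct n}[\typeclass{n}{Q}]$, then exhibits $\typegraph{G}{m}{P}\strongproduct\typegraph{G}{n}{Q}$ as an induced subgraph of $\typegraph{G}{(m+n)}{\frac{mP+nQ}{m+n}}$, which is a special case of the $\le$ relation.

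For the second inequality I would apply \Cref{lem:transitiveinduced} with $H:=\typegraph{G}{(m+n)}{\frac{mP+nQ}{m+n}}$ and $S$ the image of the concatenation map above. To this end I first need that $H$ is vertex-transitive: this holds because $S_{m+n}$ acts on $V(G)^{m+n}$ by coordinate permutations, the action is by strong-product automorphisms, it preserves every type class, and it is transitive on any one type class. By the same distributivity as in the first step, $H[S]=\typegraph{G}{m}{P}\strongproduct\typegraph{G}{n}{Q}$, so \Cref{lem:transitiveinduced} immediately gives the claimed relation $H\le\complement{K_N}\strongproduct\typegraph{G}{m}{P}\strongproduct\typegraph{G}{n}{Q}$ for some integer $N\le\frac{|V(H)|}{|S|}\ln|V(H)|+1$.

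It then remains to bound this $N$. Combining the standard type-class cardinality estimates $|V(H)|\le 2^{(m+n)\entropy(\frac{mP+nQ}{m+n})}$ and $|S|=|\typeclass{m}{P}|\cdot|\typeclass{n}{Q}|\ge(m+1)^{-|V(G)|}(n+1)^{-|V(G)|}2^{m\entropy(P)+n\entropy(Q)}$ produces one factor of the form $(m+1)^{|V(G)|}(n+1)^{|V(G)|}2^{E}$, where $E=(m+n)\entropy(\frac{mP+nQ}{m+n})-m\entropy(P)-n\entropy(Q)$. The remaining $\ln|V(H)|+1$ is at most $(m+n)|V(G)|+1$, and by Bernoulli's inequality this is absorbed into a second copy of $(m+1)^{|V(G)|}(n+1)^{|V(G)|}\ge(1+m|V(G)|)(1+n|V(G)|)$, producing the stated exponent $2|V(G)|$. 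The only mild obstacle is this last bookkeeping step; everything else is essentially a direct application of \Cref{lem:transitiveinduced} to the natural vertex-transitive envelope of the product of type graphs.
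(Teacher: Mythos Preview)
Your proposal is correct and follows essentially the same route as the paper: both prove the first inequality by realising the product of type graphs as an induced subgraph via concatenation, and both obtain the second inequality by applying \Cref{lem:transitiveinduced} to the vertex-transitive type graph $\typegraph{G}{(m+n)}{\frac{mP+nQ}{m+n}}$ with $S=\typeclass{m}{P}\times\typeclass{n}{Q}$, then bounding $N$ via the standard type-class size estimates. The only difference is cosmetic: the paper bounds $\ln|V(H)|$ by $(m+n)\ln|V(G)|$ rather than your coarser $(m+n)|V(G)|$, but either suffices to absorb the logarithmic factor into the second copy of $(m+1)^{|V(G)|}(n+1)^{|V(G)|}$.
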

\begin{proof}
We start with the first inequality. Both sides can be represented as induced subgraphs of $G^{\strongproduct(m+n)}$, on the vertex sets $\typeclass{m}{P}\times\typeclass{n}{Q}$ and $\typeclass{m+n}{\frac{mP+nQ}{m+n}}$, respectively. Since $\typeclass{m}{P}\times\typeclass{n}{Q}\subseteq\typeclass{m+n}{\frac{mP+nQ}{m+n}}$, the left hand side is an induded subgraph of the right hand side.

For the second inequality we apply \Cref{lem:transitiveinduced} to the graph $\typegraph{G}{(m+n)}{\frac{mP+nQ}{m+n}}$ and the subset $T^m_P\times T^n_Q$ of its vertex set. The upper bound on the resulting $N$ follows from the (crude) estimate
\begin{equation}
\begin{split}
N
 & = \left\lfloor\frac{\Big|\typeclass{m+n}{\frac{mP+nQ}{m+n}}\Big|}{|\typeclass{m}{P}\times\typeclass{n}{Q}|}\ln\Big|\typeclass{m+n}{\frac{mP+nQ}{m+n}}\Big|\right\rfloor+1  \\
 & \le \left\lfloor(m+1)^{|V(G)|}(n+1)^{|V(G)|}2^{(m+n)\entropy(\frac{mP+nQ}{m+n})-m\entropy(P)-n\entropy(Q)}(m+n)\ln|V(G)|\right\rfloor+1  \\
 & \le (m+1)^{2|V(G)|}(n+1)^{2|V(G)|}2^{(m+n)\entropy(\frac{mP+nQ}{m+n})-m\entropy(P)-n\entropy(Q)}.
\end{split}
\end{equation}
\end{proof}

\begin{proposition}\label{prop:probabilisticrefinement}
For every nonempty graph $G$ and $P\in\distributions[m](V(G))$ we have
\begin{equation}
\lim_{k\to\infty}\frac{1}{km}\log f(\typegraph{G}{km}{P})=\sup_{k\in\mathbb{N}}\frac{1}{km}\log f(\typegraph{G}{km}{P}).
\end{equation}
This expression defines a uniformly continuous function $F(G,\cdot)$ on $\distributions[\mathbb{Q}](V(G))$, therefore has a unique continuous extension to $\distributions(V(G))$, which we denote by the same symbol. Moreover,
\begin{enumerate}[(i)]
\item\label{it:Fbounded} $0\le F(G,P)\le\entropy(P)$
\item\label{it:Fconcave} $P\mapsto F(G,P)$ is concave (\cref{it:probconcave})
\item\label{it:HFconcave} $P\mapsto \entropy(P)-F(G,P)$ is concave
\item\label{it:Fcontinuous} $F$ satisfies the continuity estimate
\begin{equation}\label{eq:continuityestimate}
\begin{split}
|F(G,P)-F(G,Q)|
 & \le\frac{\norm[1]{P-Q}}{2}\log(|V(G)|-1)+h\left(\frac{\norm[1]{P-Q}}{2}\right)  \\
 & \quad+2\left(1-h\left(\frac{2+\norm[1]{P-Q}}{4}\right)\right).
\end{split}
\end{equation}
\end{enumerate}
\end{proposition}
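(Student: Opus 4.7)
The plan is to verify the four claims in the order they are listed, using only the properties \cref{it:spectraladditive,it:spectralmultiplicative,it:spectralmonotone,it:spectralnormalised} and \Cref{prop:convexcombinationbounds}. Specializing the first inequality of \Cref{prop:convexcombinationbounds} to $Q=P$ gives $\typegraph{G}{km}{P}\strongproduct\typegraph{G}{\ell m}{P}\le\typegraph{G}{(k+\ell)m}{P}$, so by \cref{it:spectralmultiplicative,it:spectralmonotone} the sequence $\log f(\typegraph{G}{km}{P})$ is superadditive in $k$, and Fekete's lemma yields existence of the limit and its equality with $\sup_k$. For \cref{it:Fbounded}, I would use that $\complement{K_n}=K_1\sqcup\cdots\sqcup K_1$, so \cref{it:spectraladditive,it:spectralnormalised} give $f(\complement{K_n})=n$; any bijection $V(G)\to[n]$ is a homomorphism $\complement{G}\to K_n$, hence $G\le\complement{K_n}$ and $f(G)\le|V(G)|$. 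Applied to $\typegraph{G}{km}{P}$, whose vertex count is at most $2^{km\entropy(P)}$, this gives $F(G,P)\le\entropy(P)$, while the lower bound $F(G,P)\ge 0$ is immediate since $f\ge 1$ on any nonempty graph.

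For \cref{it:Fconcave}, I would apply the first inequality of \Cref{prop:convexcombinationbounds} to rescaled types $aP$ and $bQ$ with $P\in\distributions[m](V(G))$, $Q\in\distributions[n](V(G))$, take logarithms, divide by $am+bn$, and let $a,b\to\infty$ with $\frac{am}{am+bn}\to t$; one recovers $F(G,tP+(1-t)Q)\ge tF(G,P)+(1-t)F(G,Q)$ for rational $t$. For \cref{it:HFconcave} the same limiting procedure is applied to the second inequality; here the factor $f(\complement{K_N})=N$ contributes a logarithmic term equal to $(am+bn)\entropy(tP+(1-t)Q)-am\,\entropy(P)-bn\,\entropy(Q)$ up to $o(am+bn)$, so that rearranging gives the convexity of $F-\entropy$, i.e., the concavity of $\entropy-F$.

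For \cref{it:Fcontinuous}, I would combine the two concavities through an Alicki--Fannes--Winter style argument. Setting $\epsilon=\tfrac{1}{2}\norm[1]{P-Q}$ and using the canonical decomposition $P=(1-\epsilon)C+\epsilon A$, $Q=(1-\epsilon)C+\epsilon B$ with $A,B$ supported on disjoint sets, concavity of $F$ bounds $F(G,P)$ and $F(G,Q)$ from below in terms of $F(G,C),F(G,A),F(G,B)$, while concavity of $\entropy-F$ converts these into an upper bound on $F(G,P)-F(G,Q)$ modulo differences of Shannon entropies. The latter are handled by the Audenaert sharpening of Fannes' inequality, which is responsible for the first two terms on the right hand side of \cref{eq:continuityestimate}; the refined third term $2\bigl(1-h\bigl(\tfrac{2+\norm[1]{P-Q}}{4}\bigr)\bigr)$ arises by choosing the AFW mixing weight slightly displaced from $\tfrac{1}{2}$ so as to absorb the second-order error from interpolation. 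Uniform continuity on $\distributions[\mathbb{Q}](V(G))$ then follows because the right hand side of \cref{eq:continuityestimate} tends to $0$ with $\norm[1]{P-Q}$, and the unique extension to $\distributions(V(G))$ is automatic by density; \cref{it:Fbounded,it:Fconcave,it:HFconcave} pass to the extension by continuity.

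The main obstacle will be obtaining the continuity bound in the sharp form \cref{eq:continuityestimate}. A naive use of the two concavities together with $0\le F\le\entropy$ gives only a crude bound of order $\epsilon\log|V(G)|$, so extracting the refined coefficient $\log(|V(G)|-1)$ and the quadratic-order correction requires a careful choice of coupling and a tight bookkeeping of the mixing parameters; this is the step where the argument has to avoid spending an additive $O(\epsilon)$ that would destroy the Fannes-type form of the bound.
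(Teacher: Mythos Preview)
Your treatment of the limit, of \cref{it:Fbounded}, and of the two concavities \cref{it:Fconcave,it:HFconcave} is essentially the paper's argument: Fekete applied to $a_k=\log f(\typegraph{G}{km}{P})$, the bound $f(\typegraph{G}{km}{P})\le|\typeclass{km}{P}|\le 2^{km\entropy(P)}$, and the two inequalities of \Cref{prop:convexcombinationbounds} pushed through $\frac{1}{k(m+n)}\log f(\cdot)$ and $k\to\infty$.

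For \cref{it:Fcontinuous} your plan diverges from the paper, and the description contains a couple of inaccuracies. The paper does \emph{not} use the common-part decomposition $P=(1-\epsilon)C+\epsilon A$, $Q=(1-\epsilon)C+\epsilon B$. Instead it takes $P'=\delta^{-1}(P-Q)_-$, $Q'=\delta^{-1}(P-Q)_+$ (your $B$ and $A$) and the mixing weight $\lambda=\delta/(1+\delta)$, chosen precisely so that $\lambda P'+(1-\lambda)P=\lambda Q'+(1-\lambda)Q$. One then applies concavity of $F$ on the $P$-side and concavity of $\entropy-F$ on the $Q$-side of this common point, adds, and divides by $1-\lambda$. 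The third term $2(1-h((1+\delta)/2))$ is purely the algebraic remainder in the identity $h(\lambda)/(1-\lambda)=h(\delta)+2(1-h((1+\delta)/2))$; it has nothing to do with ``displacing the mixing weight from $\tfrac12$'', and Audenaert's inequality is never invoked---the $\log(|V(G)|-1)$ comes simply from $F(G,Q')\le\entropy(Q')\le\log|\support Q'|$.

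Your common-part route can in fact be made to work, but not as you describe it: bounding \emph{both} $F(G,P)$ and $F(G,Q)$ from below via concavity of $F$ yields two inequalities pointing the same way and no control on their difference. The correct asymmetric use is concavity of $\entropy-F$ on $P=(1-\epsilon)C+\epsilon A$ (giving an \emph{upper} bound on $F(G,P)$) together with concavity of $F$ on $Q=(1-\epsilon)C+\epsilon B$; subtracting and using $H(P)\le(1-\epsilon)H(C)+\epsilon H(A)+h(\epsilon)$ then eliminates $C$ and yields $F(G,P)-F(G,Q)\le\epsilon\log(|V(G)|-1)+h(\epsilon)$, which is actually \emph{stronger} than \cref{eq:continuityestimate}. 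So your decomposition is viable and even preferable, but the mechanism you sketched for combining the two concavities and your account of the origin of the extra term both need correcting.
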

\begin{proof}
It is enough to establish existence of the limit and verify \cref{it:Fbounded,it:Fconcave,it:HFconcave,it:Fcontinuous} on $\distributions[\mathbb{Q}](V(G))$. \Cref{it:Fcontinuous} will then imply uniform continuity, hence existence of the continuous extension, which is unique since $\distributions[\mathbb{Q}](V(G))$ is dense in $\distributions(V(G))$.

Let $m\in\mathbb{N}$ and $P\in\distributions[m](V(G))$. Let $a_k=\log f(\typegraph{G}{km}{P})$. By the first inequality of \Cref{prop:convexcombinationbounds}, $\typegraph{G}{k_1m}{P}\strongproduct \typegraph{G}{k_2m}{P}\le \typegraph{G}{(k_1+k_2)m}{P}$. Apply $f$ to both sides. Using that $f$ is monotone, multiplicative under the strong product, and $f(\complement{K_N})=N$ we get
\begin{equation}
0\le a_{k_1}+a_{k_2}\le a_{k_1+k_2}\le\log|\typeclass{(k_1+k_2)m}{P}|\le(k_1+k_2)m\entropy(P).
\end{equation}
By Fekete's lemma $\frac{a_k}{km}$ converges to its supremum, which is in the interval $[0,\entropy(P)]$ (\cref{it:Fbounded}). If $m,m'\in\mathbb{N}$ and $P\in\distributions[m](V(G))\cap\distributions[m'](V(G))$ then also $P\in\distributions[mm'](V(G))$ and
\begin{equation}
\begin{split}
\lim_{k\to\infty}\frac{1}{km}\log f(\typegraph{G}{km}{P})
 & = \lim_{k\to\infty}\frac{1}{kmm'}\log f(\typegraph{G}{kmm'}{P})  \\
 & = \lim_{k\to\infty}\frac{1}{km'}\log f(\typegraph{G}{km'}{P}),
\end{split}
\end{equation}
because the sequence in the middle is a subsequence of the other two. Therefore the limit defines a function on $\distributions[\mathbb{Q}](V(G))$.

Let $P,Q\in\distributions[\mathbb{Q}](V(G))$ and $\lambda\in[0,1]\cap\mathbb{Q}$. Choose $m,n\in\mathbb{N}$ such that $m=\lambda(m+n)$, $P\in\distributions[m](V(G))$ and $Q\in\distributions[n](V(G))$. By \Cref{prop:convexcombinationbounds} we have
\begin{equation}
\begin{split}
\typegraph{G}{km}{P}\strongproduct\typegraph{G}{kn}{Q}
 & \le\typegraph{G}{k(m+n)}{\frac{mP+nQ}{m+n}}  \\
 & \le\complement{K_N}\strongproduct\typegraph{G}{km}{P}\strongproduct\typegraph{G}{kn}{Q}
\end{split}
\end{equation}
with
\begin{equation}
N\le(km+1)^{2|V(G)|}(kn+1)^{2|V(G)|}2^{k(m+n)\left[\entropy(\lambda P+(1-\lambda)Q)-\lambda\entropy(P)-(1-\lambda)\entropy(Q)\right]}
\end{equation}
Apply $f$, take the logarithm and divide by $k(m+n)$ to get
\begin{multline}
\lambda\frac{1}{km}\log f(\typegraph{G}{km}{P})+(1-\lambda)\frac{1}{kn}\log f(\typegraph{G}{kn}{Q})  \\  \le\frac{1}{k(m+n)}\log f(\typegraph{G}{k(m+n)}{\lambda P+(1-\lambda)Q})  \\  \le\lambda\frac{1}{km}\log f(\typegraph{G}{km}{P})+(1-\lambda)\frac{1}{kn}\log f(\typegraph{G}{kn}{Q})+\frac{\log N}{k(m+n)}.
\end{multline}
and take the limit $k\to\infty$:
\begin{equation}
\begin{split}
\lambda F(G,P)+(1-\lambda)F(G,Q)
 & \le F(G,\lambda P+(1-\lambda)Q)  \\
 & \le \lambda F(G,P)+(1-\lambda)F(G,Q)+\entropy(\lambda P+(1-\lambda)Q)-\lambda\entropy(P)-(1-\lambda)\entropy(Q).
\end{split}
\end{equation}
This proves \cref{it:Fconcave,it:HFconcave}.

Let $P,Q\in\distributions[\mathbb{Q}](V(G))$, $\delta=\frac{1}{2}\norm[1]{P-Q}\neq 0$ and define
\begin{align}
P' & = \delta^{-1}(P-Q)_-  \\
Q' & = \delta^{-1}(P-Q)_+
\end{align}
and $\lambda=\delta/(1+\delta)$. Then $\lambda P'+(1-\lambda)P=\lambda Q'+(1-\lambda)Q$. By concavity of $F$ and $H-F$ and using $\entropy(\lambda P+(1-\lambda)Q)-\lambda\entropy(P)-(1-\lambda)\entropy(Q)\le h(\lambda)$ we get the estimates
\begin{align}
F(G,P)-F(G,\lambda P'+(1-\lambda)P) & \le \lambda(F(G,P)-F(G,P'))  \\
F(G,\lambda Q'+(1-\lambda)Q)-F(G,Q) & \le \lambda(F(G,Q')-F(G,Q))+h(\lambda).
\end{align}
We add the two inequalities and rearrange:
\begin{equation}
\begin{split}
(1-\lambda)(F(G,P)-F(G,Q))
 & \le\lambda(F(G,Q')-F(G,P'))+h(\lambda)  \\
 & \le\lambda\log|\support Q'|+h(\lambda)  \\
 & \le\lambda\log(|V(G)|-1)+h(\lambda)
\end{split}
\end{equation}
Finally, divide by $1-\lambda$ and use the definition of $\lambda$:
\begin{equation}\label{eq:Fannestype}
F(G,P)-F(G,Q)\le \delta\log(|V(G)|-1)+h(\delta)+2\left(1-h\left(\frac{1+\delta}{2}\right)\right).
\end{equation}
The expression on the right hand side is symmetric in $P$ and $Q$, therefore it is also an upper bound on the absolute value of the left hand side, which proves \cref{it:Fcontinuous}.
\end{proof}
The probabilistic refinement of the Lov\'asz theta number was defined and studied by Marton in \cite{marton1993shannon} via a non-asymptotic formula. The probabilistic refinement of the fractional clique cover number is related to the graph entropy as $\entropy(G,P)=\log\complement{\chi}_f(\complement{G},P)$ \cite{korner1973coding}.

Clearly, $F(G,P)$ only depends on $G[\support P]$ and $P$.

We remark that the upper bound in eq. \eqref{eq:continuityestimate} is close to optimal among the expressions depending only on $|V(G)|$ and $\norm[1]{P-Q}$: if we omit the last term and specialise to $F(\complement{K_N},P)=\entropy(P)$ then it becomes sharp, see \cite[Theorem 3.8]{petz2007quantum} and \cite{audenaert2007sharp}.

The route we followed is not the only way to arrive at the probabilistic refinement. We now state its equivalence with other common definitions.
\begin{proposition}
Let $G$ be a graph and $P\in\distributions(V(G))$. Then
\begin{align}
F(G,P) & =\lim_{n\to\infty}\frac{1}{n}\log f(\typegraph{G}{n}{P_n})  \label{eq:probrefinementsequence}  \\
 & = \lim_{\epsilon\to 0}\lim_{n\to\infty}\frac{1}{n}\log f(\typegraph{G}{n}{\ball{\epsilon}{P}})  \label{eq:probrefinementtypical}  \\
 & = \lim_{n\to\infty}\min_{\substack{S\subseteq V(G)^n  \\  P^{\otimes n}(S)>c}}\frac{1}{n}\log f(G^{\strongproduct n}[S]),  \label{eq:probrefinementessential}
\end{align}
for any sequence $(P_n)_{n\in\mathbb{N}}$ such that $P_n\in\distributions[n](V(G))$ and $P_n\to P$, and any $c\in(0,1)$.
\end{proposition}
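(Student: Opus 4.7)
The plan is to establish \eqref{eq:probrefinementsequence} first by a vertex-transitivity argument, and then deduce \eqref{eq:probrefinementtypical} and \eqref{eq:probrefinementessential} from it by a standard decomposition into type classes combined with Sanov- and Pinsker-type estimates. The main obstacle will be the lower bound in \eqref{eq:probrefinementsequence}; everything else is combinatorial book-keeping on top of it.

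For \eqref{eq:probrefinementsequence} the upper bound is immediate: since $P_n\in\distributions[n](V(G))$, the supremum characterisation in \Cref{prop:probabilisticrefinement} gives $\frac{1}{n}\log f(\typegraph{G}{n}{P_n})\le F(G,P_n)$, and $F(G,P_n)\to F(G,P)$ by the continuity estimate \eqref{eq:continuityestimate}. For the matching lower bound I would apply \Cref{lem:transitiveinduced} to the vertex-transitive graph $H=\typegraph{G}{kn}{P_n}$ and the subset $S=(\typeclass{n}{P_n})^k\subseteq\typeclass{kn}{P_n}$ obtained by concatenation, observing that $H[S]$ is exactly $(\typegraph{G}{n}{P_n})^{\strongproduct k}$. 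The type class estimates \eqref{eq:typeclasssize} give $|V(H)|/|S|\le(n+1)^{k|V(G)|}$, hence the $N$ furnished by the lemma satisfies $\log N\le k|V(G)|\log(n+1)+O(\log(kn))$. Applying $f$, dividing by $kn$ and sending $k\to\infty$ (so that the left-hand side becomes $F(G,P_n)$) yields
\begin{equation*}
F(G,P_n)\le\frac{1}{n}\log f(\typegraph{G}{n}{P_n})+\frac{|V(G)|\log(n+1)}{n}.
\end{equation*}
The additive correction vanishes as $n\to\infty$ and continuity of $F$ closes the gap, so $\liminf_n\frac{1}{n}\log f(\typegraph{G}{n}{P_n})\ge F(G,P)$.

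Once \eqref{eq:probrefinementsequence} is established, equation \eqref{eq:probrefinementtypical} is quick. Existence of the inner limit is Fekete applied to $\log f(\typegraph{G}{n}{\ball{\epsilon}{P}})$, which is superadditive because convexity of the ball implies $\typeclass{m}{\ball{\epsilon}{P}}\times\typeclass{n}{\ball{\epsilon}{P}}\subseteq\typeclass{m+n}{\ball{\epsilon}{P}}$. The upper bound uses $\typegraph{G}{n}{\ball{\epsilon}{P}}\le\bigsqcup_Q\typegraph{G}{n}{Q}$ (the disjoint union retains only intra-type edges) together with \cref{it:spectraladditive,it:spectralmonotone}, the estimate $\frac{1}{n}\log f(\typegraph{G}{n}{Q})\le F(G,Q)$, and the polynomial bound on the number of $n$-types, reducing to uniform continuity of $F$ as $\epsilon\to 0$. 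The lower bound picks any sequence $P_n\in\ball{\epsilon}{P}\cap\distributions[n](V(G))$ with $P_n\to P$ and uses $\typegraph{G}{n}{P_n}\le\typegraph{G}{n}{\ball{\epsilon}{P}}$ together with \eqref{eq:probrefinementsequence}. For \eqref{eq:probrefinementessential}, the upper bound takes $S_n=\typeclass{n}{\ball{\epsilon}{P}}$, whose $P^{\otimes n}$-mass tends to $1$ by the law of large numbers and is therefore admissible for large $n$. For the lower bound, given an admissible $S_n$, Sanov-type bounds discard types outside $\ball{\epsilon}{P}$ (their total $P^{\otimes n}$-mass is exponentially small), and then pigeonhole over the polynomially many remaining types selects a dominant type $Q_n^*\in\ball{\epsilon}{P}$ with $|S_n\cap\typeclass{n}{Q_n^*}|/|\typeclass{n}{Q_n^*}|\ge c/(2(n+1)^{|V(G)|})$; the type class size bound then forces the relative entropy $D(Q_n^*\|P)=O((\log n)/n)$, so $Q_n^*\to P$ by Pinsker. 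A final application of \Cref{lem:transitiveinduced} to $\typegraph{G}{n}{Q_n^*}$ and $S_n\cap\typeclass{n}{Q_n^*}$ then translates a lower bound on $\frac{1}{n}\log f(G^{\strongproduct n}[S_n])$ into $\frac{1}{n}\log f(\typegraph{G}{n}{Q_n^*})-o(1)$, which tends to $F(G,P)$ by \eqref{eq:probrefinementsequence}.
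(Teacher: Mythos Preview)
Your proposal is correct, and the overall architecture (prove \eqref{eq:probrefinementsequence} first, reduce the other two to it via type-decomposition and \Cref{lem:transitiveinduced}) matches the paper's. The individual arguments, however, differ in a couple of places.

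For the lower bound in \eqref{eq:probrefinementsequence} you apply \Cref{lem:transitiveinduced} to $H=\typegraph{G}{kn}{P_n}$ with the concatenation subset $S=(\typeclass{n}{P_n})^k$, getting directly $F(G,P_n)\le\frac{1}{n}\log f(\typegraph{G}{n}{P_n})+\frac{|V(G)|\log(n+1)}{n}$; continuity then closes the gap. The paper instead fixes a rational $Q$ with $\support Q=\support P$ close to $P$ and uses \Cref{prop:convexcombinationbounds} to embed $\typegraph{G}{k_nm}{Q}\le\typegraph{G}{n}{P_n}$, with $k_n\sim n/m$, and only afterwards lets $Q\to P$. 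Your route is a little more direct and avoids introducing the auxiliary $Q$; the paper's route reuses the concatenation lemma it has already set up.

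For the lower bound in \eqref{eq:probrefinementessential} you force convergence of the dominant type via the large-deviation bound $P^{\otimes n}(\typeclass{n}{Q})\le 2^{-nD(Q\|P)}$: since the pigeonholed type carries mass at least $c/(n+1)^{|V(G)|}$, one gets $D(Q_n^*\|P)=O((\log n)/n)$ and hence $Q_n^*\to P$ by Pinsker. (This actually makes your Sanov discard step redundant.) The paper instead picks a shrinking sequence $\epsilon_n\to 0$ with $P^{\otimes n}(\typeclass{n}{\ball{\epsilon_n}{P}})\to 1$, intersects with $S_n$, and pigeonholes among types in $\ball{\epsilon_n}{P}$, so convergence of the selected type is built in. Both methods finish the same way, with \Cref{lem:transitiveinduced} and \eqref{eq:probrefinementsequence}. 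Your relative-entropy argument is self-contained and slightly sharper; the paper's is more in line with its running $\epsilon$-ball formalism. For \eqref{eq:probrefinementtypical} the two proofs are essentially identical.
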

For the proof see \Cref{sec:proof}.
\begin{proposition}\label{prop:ffromF}
For any graph $G$ we have $\displaystyle f(G)=\max_{P\in\distributions(V(G))}f(G,P)$.
\end{proposition}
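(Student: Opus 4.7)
My plan is to prove the two inequalities separately. For the easy direction $\max_P f(G,P) \le f(G)$, I would use that $\typegraph{G}{n}{\ball{\epsilon}{P}}$ is an induced subgraph of $G^{\strongproduct n}$, hence $\typegraph{G}{n}{\ball{\epsilon}{P}} \le G^{\strongproduct n}$. Combining monotonicity \cref{it:spectralmonotone} and multiplicativity \cref{it:spectralmultiplicative} gives $f(\typegraph{G}{n}{\ball{\epsilon}{P}}) \le f(G)^n$; taking the $n$th root and the appropriate limits yields $f(G,P) \le f(G)$ for every $P \in \distributions(V(G))$.

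The substantive direction is $f(G) \le \max_P f(G,P)$, and the key will be the observation that the identity map of $V(G)^n$ is a graph homomorphism
\begin{equation}
\complement{G^{\strongproduct n}} \to \complement{\bigsqcup_{P \in \distributions[n](V(G))} \typegraph{G}{n}{P}}.
\end{equation}
Indeed, any non-edge of $G^{\strongproduct n}$ either has endpoints in distinct type classes, in which case it is an edge on the right (the complement of a disjoint union is a join, which contains all cross-edges), or it has endpoints in a common type class $\typeclass{n}{P}$, in which case it is also a non-edge of $\typegraph{G}{n}{P}$. This yields $G^{\strongproduct n} \le \bigsqcup_P \typegraph{G}{n}{P}$; applying $f$, using additivity \cref{it:spectraladditive}, multiplicativity, and the polynomial bound $|\distributions[n](V(G))| \le (n+1)^{|V(G)|}$ on the number of types, I obtain
\begin{equation}
f(G)^n \le \sum_{P \in \distributions[n](V(G))} f(\typegraph{G}{n}{P}) \le (n+1)^{|V(G)|} \max_{P \in \distributions[n](V(G))} f(\typegraph{G}{n}{P}).
\end{equation}

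To finish, I invoke \Cref{prop:probabilisticrefinement}: Fekete's lemma (as used in its proof) shows that for every $n$-type $P$ one has $\frac{1}{n}\log f(\typegraph{G}{n}{P}) \le F(G,P)$. Taking logarithms of the display above and letting $n \to \infty$ gives $\log f(G) \le \sup_{P \in \distributions[\mathbb{Q}](V(G))} F(G,P)$, and since $F(G,\cdot)$ extends continuously to the compact simplex $\distributions(V(G))$, this supremum is attained and coincides with $\max_{P \in \distributions(V(G))} F(G,P)$. Exponentiating and combining with the first direction yields the claim. I do not anticipate a serious obstacle here: the combinatorial homomorphism above is the only genuinely new ingredient, while the passage from $n$-types to arbitrary distributions is routine given the continuity established in \Cref{prop:probabilisticrefinement}.
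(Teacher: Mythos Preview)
Your proposal is correct and follows essentially the same route as the paper: the paper factors the argument through the more general \Cref{prop:supFopen} (applied with $\mathcal{U}=\distributions(V(G))$), but the proof of that proposition is precisely your type-class decomposition $G^{\strongproduct n}\le\bigsqcup_{P}\typegraph{G}{n}{P}$, additivity, the polynomial bound on the number of types, and the Fekete inequality $f(\typegraph{G}{n}{P})\le f(G,P)^n$. You have simply inlined that argument for the special case needed here.
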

For the proof see \Cref{sec:proof}.

\begin{lemma}\label{prop:marginaltypeclassproduct}
Let $G,H$ be graphs and $P\in\distributions[n](V(G)\times V(H))$. Let $P_G$ and $P_H$ denote its marginals on $V(G)$ and $V(H)$, respectively. Then
\begin{equation}
\typegraph{(G\strongproduct H)}{n}{P}\le\typegraph{G}{n}{P_G}\strongproduct\typegraph{H}{n}{P_H}\le\complement{K_N}\strongproduct\typegraph{(G\strongproduct H)}{n}{P}
\end{equation}
holds in $\graphs$ for some $N\in\mathbb{N}$ satisfying $N\le(n+1)^{2|V(G)||V(H)|}2^{n\mutualinformation(G:H)_P}$.
\end{lemma}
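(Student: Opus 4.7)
The plan is to mirror the argument of \Cref{prop:convexcombinationbounds}: derive the first inequality from an induced-subgraph inclusion inside a common graph, and obtain the second by applying \Cref{lem:transitiveinduced} to a suitable vertex-transitive graph.

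For the first inequality, I would invoke the natural graph isomorphism $(G\strongproduct H)^{\strongproduct n}\cong G^{\strongproduct n}\strongproduct H^{\strongproduct n}$ coming from shuffling coordinates $(V(G)\times V(H))^n\cong V(G)^n\times V(H)^n$. Under this identification, any sequence of joint type $P$ projects to a pair of sequences of marginal types $P_G$ and $P_H$, so $\typeclass{n}{P}\subseteq \typeclass{n}{P_G}\times\typeclass{n}{P_H}$. Since restricting a strong product to a product set yields the strong product of the restrictions, $\typegraph{(G\strongproduct H)}{n}{P}$ sits inside $\typegraph{G}{n}{P_G}\strongproduct\typegraph{H}{n}{P_H}$ as an induced subgraph, and this inclusion gives the desired homomorphism on complements.

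For the second inequality, the key observation is that $\typegraph{G}{n}{P_G}\strongproduct\typegraph{H}{n}{P_H}$ is vertex-transitive: the group $S_n\times S_n$ acts by coordinate permutations on the two factors independently, and for any two vertices $(u,v),(u',v')$ with $u,u'\in\typeclass{n}{P_G}$ and $v,v'\in\typeclass{n}{P_H}$ one can find $(\sigma,\tau)\in S_n\times S_n$ with $\sigma\cdot u=u'$ and $\tau\cdot v=v'$. Applying \Cref{lem:transitiveinduced} with $S=\typeclass{n}{P}$ yields the second inequality together with
\begin{equation}
N\le\left\lfloor\frac{|\typeclass{n}{P_G}||\typeclass{n}{P_H}|}{|\typeclass{n}{P}|}\ln\bigl(|\typeclass{n}{P_G}||\typeclass{n}{P_H}|\bigr)\right\rfloor+1.
\end{equation}

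The remaining step is purely arithmetic. The standard estimates $|\typeclass{n}{P_G}|\le 2^{n\entropy(P_G)}$, $|\typeclass{n}{P_H}|\le 2^{n\entropy(P_H)}$ and $|\typeclass{n}{P}|\ge (n+1)^{-|V(G)||V(H)|}2^{n\entropy(P)}$ bound the ratio by $(n+1)^{|V(G)||V(H)|}2^{n\mutualinformation(G:H)_P}$, and the logarithmic factor together with the $+1$ are absorbed into an additional $(n+1)^{|V(G)||V(H)|}$, giving the claimed bound on $N$. I expect no serious obstacle; the one point that requires a moment of care is the vertex-transitivity claim, which is however immediate once one recognizes that the two coordinate symmetric groups act independently on the two tensor factors.
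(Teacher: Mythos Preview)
Your proposal is correct and follows essentially the same argument as the paper: the first inequality via the induced-subgraph inclusion $\typeclass{n}{P}\subseteq\typeclass{n}{P_G}\times\typeclass{n}{P_H}$ inside $(G\strongproduct H)^{\strongproduct n}\cong G^{\strongproduct n}\strongproduct H^{\strongproduct n}$, and the second via \Cref{lem:transitiveinduced} applied to the vertex-transitive graph $\typegraph{G}{n}{P_G}\strongproduct\typegraph{H}{n}{P_H}$ under the $S_n\times S_n$-action, with the same type-class size estimates for the bound on $N$.
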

\begin{proof}
The marginal types of any sequence in $\typeclass{n}{P}$ are $P_G$ and $P_H$, therefore $\typegraph{(G\strongproduct H)}{n}{P}$ is an induced subgraph of $\typegraph{G}{n}{P_G}\strongproduct\typegraph{H}{n}{P_H}$.

For the second inequality we apply \Cref{lem:transitiveinduced} to the graph $\typegraph{G}{n}{P_G}\strongproduct\typegraph{H}{n}{P_H}$, which comes equipped with a transitive action of $S_n\times S_n$. The upper bound on $N$ can be seen from
\begin{equation}
\begin{split}
N
 & = \left\lfloor\frac{|\typeclass{n}{P_G}\times\typeclass{n}{P_H}|}{|\typeclass{n}{P}|}\ln|\typeclass{n}{P_G}\times\typeclass{n}{P_H}|\right\rfloor+1  \\
 & \le \left\lfloor(n+1)^{|V(G)||V(H)|}2^{n\mutualinformation(G:H)_P}n\ln|V(G)||V(H)|\right\rfloor+1  \\
 & \le (n+1)^{2|V(G)||V(H)|}2^{n\mutualinformation(G:H)_P}.
\end{split}
\end{equation}
\end{proof}

\begin{proposition}\label{prop:probmultiplative}
$F$ satisfies \cref{it:probmultiplicative}.
\end{proposition}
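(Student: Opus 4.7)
The plan is to apply \Cref{prop:marginaltypeclassproduct} together with the defining properties of a spectral point, and then push the inequalities through the limit defining $F$.

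First, I would reduce to the case $P\in\distributions[\mathbb{Q}](V(G)\times V(H))$. For such $P$, pick $n$ with $P\in\distributions[n](V(G)\times V(H))$; the marginals $P_G$ and $P_H$ are then automatically $n$-types. Since every $n$-type is also a $kn$-type, \Cref{prop:marginaltypeclassproduct} applied at length $kn$ gives
\begin{equation}
\typegraph{(G\strongproduct H)}{kn}{P}\le\typegraph{G}{kn}{P_G}\strongproduct\typegraph{H}{kn}{P_H}\le\complement{K_N}\strongproduct\typegraph{(G\strongproduct H)}{kn}{P}
\end{equation}
with $N\le(kn+1)^{2|V(G)||V(H)|}2^{kn\mutualinformation(G:H)_P}$.

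Next, apply $f$. Using monotonicity (\ref{it:spectralmonotone}), multiplicativity under $\strongproduct$ (\ref{it:spectralmultiplicative}), and $f(\complement{K_N})=N$, I obtain
\begin{equation}
f(\typegraph{(G\strongproduct H)}{kn}{P})\le f(\typegraph{G}{kn}{P_G})f(\typegraph{H}{kn}{P_H})\le N\cdot f(\typegraph{(G\strongproduct H)}{kn}{P}).
\end{equation}
Taking $\log$, dividing by $kn$, and letting $k\to\infty$, the outer terms converge to $F(G\strongproduct H,P)$ by \Cref{prop:probabilisticrefinement}, the product term splits into $F(G,P_G)+F(H,P_H)$, and the correction satisfies
\begin{equation}
\frac{\log N}{kn}\le \frac{2|V(G)||V(H)|\log(kn+1)}{kn}+\mutualinformation(G:H)_P\xrightarrow{k\to\infty}\mutualinformation(G:H)_P.
\end{equation}
This proves \cref{it:probmultiplicative} on $\distributions[\mathbb{Q}](V(G)\times V(H))$.

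Finally, I would extend to arbitrary $P\in\distributions(V(G)\times V(H))$ by continuity. The marginal maps $P\mapsto P_G$, $P\mapsto P_H$ are affine, hence continuous; $F(G,\cdot)$, $F(H,\cdot)$, and $F(G\strongproduct H,\cdot)$ are continuous by \Cref{prop:probabilisticrefinement}\cref{it:Fcontinuous}; and $\mutualinformation(G:H)_P$ is a continuous function of $P$. Since $\distributions[\mathbb{Q}](V(G)\times V(H))$ is dense, the inequality passes to all $P$. There is no real obstacle here; the only point requiring some care is that the $\log N$ term must be handled as $k\to\infty$ rather than at fixed $n$, which is why I work with $kn$-types rather than just $n$-types at the start.
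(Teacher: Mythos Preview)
Your proposal is correct and follows essentially the same approach as the paper: reduce to rational $P$ by continuity, apply \Cref{prop:marginaltypeclassproduct} at level $kn$, apply $f$ using monotonicity, $\strongproduct$-multiplicativity and $f(\complement{K_N})=N$, then take $\frac{1}{kn}\log$ and let $k\to\infty$. The paper is slightly terser about the continuity step and the handling of the $\log N$ term, but the argument is the same.
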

\begin{proof}
By continuity, it is enough to verify the inequalities for distributions with rational probabilities. Let $G,H$ be graphs and $P\in\distributions[n](V(G)\times V(H))$. \Cref{prop:marginaltypeclassproduct} implies
\begin{equation}
\typegraph{(G\strongproduct H)}{kn}{P}\le\typegraph{G}{kn}{P_G}\strongproduct\typegraph{H}{kn}{P_H}\le\complement{K_N}\strongproduct\typegraph{(G\strongproduct H)}{kn}{P}
\end{equation}
where $N=(kn+1)^{2|V(G)||V(H)|}2^{kn\mutualinformation(G:H)_P}$.
Apply $f$ and $\log$ then divide by $kn$ and take the limit $k\to\infty$ to get
\begin{equation}
F(G\strongproduct H,P)\le F(G,P_G)+F(H,P_H)\le F(G\strongproduct H,P)+\mutualinformation(G:H)_P.
\end{equation}
\end{proof}

Our next goal is to study the behaviour of the probabilistic refinement under the disjoint union. We prove a more general statement because another special case will be used later and also because we believe it is interesting in itself.
\begin{definition}[{\cite[(6.1) Definition]{sabidussi1961graph}}]
Let $G$ and $(H_v)_{v\in V(G)}$ be graphs. The $G$-join $G\lexproduct(H_v)_{v\in V(G)}$ is defined as
\begin{align}
V(G\lexproduct(H_v)_{v\in V(G)}) & = \setbuild{(v,h)}{v\in V(G),h\in V(H_v)}  \\
E(G\lexproduct(H_v)_{v\in V(G)}) & = \setbuild{\{(v,h),(v',h')\}}{\{v,v'\}\in E(G)\text{ or }(v=v'\text{ and }\{h,h'\}\in E(H_v))}.
\end{align}
\end{definition}
This operation simultaneously generalizes the lexicographic product (when $H_v=H$ is independent of $v$), the join (when $G=K_2$), the disjoint union (when $G=\complement{K_2}$) and substitution \cite[\S 1]{lovasz1972normal} (when $K_v=K_1$ for all except one vertex $v\in V(G)$).

\begin{lemma}
Let $G$ and $(H_v)_{v\in V(G)}$ be graphs, $P\in\distributions[n](V(G\lexproduct(H_v)_{v\in V(G)}))$ and write it as $P(v,h)=P_G(v)P_v(h)$ with $P_G\in\distributions[n](V(G))$ and $\forall v\in V(G):P_v\in\distributions[nP_G(v)](V(H_v))$. Then
\begin{equation}
\begin{split}
\complement{K_{\alpha(\typegraph{G}{n}{P_G})}}\strongproduct\bigstrongproduct_{v\in V(G)}\typegraph{H_v}{nP_G(v)}{P_v}
 & \le\typegraph{(G\lexproduct(H_v)_{v\in V(G)})}{n}{P}  \\
 & \le \typegraph{G}{n}{P_G}\strongproduct\bigstrongproduct_{v\in V(G)}\typegraph{H_v}{nP_G(v)}{P_v}
\end{split}
\end{equation}
\end{lemma}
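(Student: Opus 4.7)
The plan is to exhibit explicit maps realising each of the two inequalities and check that they descend to homomorphisms between the complements. Write $L$ for the middle graph and $R$ for the right-hand graph.

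For the right-hand inequality, the natural candidate is the bijection $\varphi\colon V(L)\to V(R)$ that sends a sequence $s=((v_i,h_i))_{i=1}^n$ of type $P$ to the tuple $(\mathbf{v},(\mathbf{h}_v)_{v\in V(G)})$, where $\mathbf{v}=(v_1,\ldots,v_n)$ is the $V(G)$-marginal and $\mathbf{h}_v$ is the subsequence of $h$-entries at the positions where $v_i=v$, read off in increasing order of those positions. The factorisation $P(v,h)=P_G(v)P_v(h)$ ensures $\mathbf{v}\in\typeclass{n}{P_G}$ and $\mathbf{h}_v\in\typeclass{nP_G(v)}{P_v}$, and a multinomial count shows $\varphi$ is a bijection of vertex sets. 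To verify $\varphi\colon\complement{L}\to\complement{R}$ is a graph homomorphism, I would take $s\ne s'$ non-adjacent in $L$, witnessed by a position $i$ at which $(v_i,h_i)$ and $(v'_i,h'_i)$ are distinct and non-adjacent in $G\lexproduct(H_v)$. By the definition of the $G$-join this splits into two subcases: either $v_i\ne v'_i$ with $\{v_i,v'_i\}\notin E(G)$, which immediately gives non-adjacency of $\mathbf{v},\mathbf{v}'$ at coordinate $i$ in $G^{\strongproduct n}$, or $v_i=v'_i=v$ with $\{h_i,h'_i\}\notin E(H_v)$, which one then transfers to non-adjacency of the corresponding coordinate of $\mathbf{h}_v$ in $H_v^{\strongproduct nP_G(v)}$.

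For the left-hand inequality, fix an independent set $I\subseteq\typeclass{n}{P_G}$ of maximum size $\alpha=\alpha(\typegraph{G}{n}{P_G})$ and define $\psi\colon I\times\prod_v\typeclass{nP_G(v)}{P_v}\to V(L)$ by sending $(\mathbf{v},(\mathbf{h}_v)_v)$ to the sequence whose $i$-th entry is $(v_i,h_{v_i,k(i)})$, with $k(i)$ the rank of $i$ among the positions of $v_i$ in $\mathbf{v}$. Within a single $\mathbf{v}$-fibre the induced subgraph of $L$ on $\psi(\{\mathbf{v}\}\times\prod_v\typeclass{nP_G(v)}{P_v})$ is, by direct inspection of the $G\lexproduct(H_v)$-adjacency relation, isomorphic to $\bigstrongproduct_v\typegraph{H_v}{nP_G(v)}{P_v}$. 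Across distinct $\mathbf{v}\ne\mathbf{v}'\in I$, the independence of $I$ in $\typegraph{G}{n}{P_G}$ supplies a position $i$ with $v_i\ne v'_i$ and $\{v_i,v'_i\}\notin E(G)$, so for any choice of $h$-entries the pair $(v_i,h_i),(v'_i,h'_i)$ is non-adjacent in $G\lexproduct(H_v)$, forcing any two images from different $\mathbf{v}$-fibres to be non-adjacent in $L$. Since $\complement{K_\alpha}\strongproduct\bigstrongproduct_v\typegraph{H_v}{nP_G(v)}{P_v}$ is precisely the disjoint union of $\alpha$ copies of the inner strong product, $\psi$ is the required homomorphism between complements.

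The main delicate point will be the second subcase of the right-hand inequality: when the marginals $\mathbf{v},\mathbf{v}'$ differ, the subsequences $\mathbf{h}_v$ and $\mathbf{h}'_v$ are indexed by different position-sets inside $[n]$, so a local disagreement $h_i\ne h'_i$ at a shared $V(G)$-position $i$ does not automatically lift to a disagreement of the subsequences at a common coordinate of $H_v^{\strongproduct nP_G(v)}$. Handling this case carefully, by exploiting the combinatorial constraints that the fixed marginal type $P_G$ places on how the positions of $v$ in $\mathbf{v}$ and $\mathbf{v}'$ can interleave, is where the substance of the proof will lie.
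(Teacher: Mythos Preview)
Your treatment of the left-hand inequality matches the paper's and is correct: a maximum independent set $S\subseteq\typeclass{n}{P_G}$ indexes pairwise non-adjacent copies of $\prod_v\typeclass{nP_G(v)}{P_v}$ inside $\typeclass{n}{P}$, exhibiting the left-hand side as an induced subgraph of the middle graph.

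For the right-hand inequality your approach again coincides with the paper's (which simply asserts that under the natural bijection every edge of $R$ is an edge of $L$), and you are right to be uneasy about the second subcase. But the obstacle cannot be overcome by ``exploiting the combinatorial constraints that the fixed marginal type $P_G$ places on how the positions interleave'': the inequality $L\le R$ is \emph{false} as stated. Take $G=K_2$ on $\{a,b\}$, $H_a=H_b=\complement{K_2}$ on $\{0,1\}$, $n=4$, and $P$ uniform. Then $G\lexproduct(H_v)=K_{2,2}$ and
\[
R\ \simeq\ \typegraph{K_2}{4}{P_G}\strongproduct\typegraph{\complement{K_2}}{2}{P_a}\strongproduct\typegraph{\complement{K_2}}{2}{P_b}\ \simeq\ K_6\strongproduct\complement{K_2}\strongproduct\complement{K_2},
\]
so $\alpha(R)=1\cdot 2\cdot 2=4$. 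On the other hand the five sequences (writing $vh$ for $(v,h)$)
\[
(a0,b0,b1,a1),\quad(a0,a1,b0,b1),\quad(a0,b1,a1,b0),\quad(a1,a0,b0,b1),\quad(a1,a0,b1,b0)
\]
lie in $\typeclass{4}{P}$ and are pairwise non-adjacent in $L=K_{2,2}^{\strongproduct 4}[\typeclass{4}{P}]$: for each pair one finds a coordinate where the $v$-parts agree and the $h$-parts differ. Hence $\alpha(L)\ge 5>\alpha(R)$, which is incompatible with $L\le R$. (Equivalently, $\omega(R)=6$ while every clique of $L$ lies in a product of four $K_{2,2}$-edges and meets $\typeclass{4}{P}$ in at most $4$ points, so $\omega(L)=4$; thus no bijection whatsoever can carry $E(R)$ into $E(L)$.) The downstream proposition on the $G$-join only needs the asymptotic statement $\frac{1}{n}\log f(L)\le\frac{1}{n}\log f(R)+o(1)$, so a corrected version of the upper bound with a subexponential $\complement{K_N}$ factor on the right would suffice there; but the bijection route you and the paper sketch cannot be completed.
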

\begin{proof}
For the first inequality, let $S$ be an independent set of size $\alpha(\typegraph{G}{n}{P_G})$ in $\typegraph{G}{n}{P_G}$ and identify the vertex set of the left hand side with $S\times\prod_{v\in V(G)}\typeclass{nP_G(v)}{P_v}\subseteq\typeclass{n}{P}$. Then the left hand side is an induced subgraph of $\typegraph{(G\lexproduct(H_v)_{v\in V(G)})}{n}{P}$.

In the second inequality, the vertex sets of both sides are in a natural bijection and every edge of the graph in the right hand side is also an edge of $\typegraph{(G\lexproduct(H_v)_{v\in V(G)})}{n}{P}$.
\end{proof}

\begin{proposition}\label{prop:Gjoin}
Let $G$ and $(H_v)_{v\in V(G)}$ be graphs, $P\in\distributions(V(G\lexproduct(H_v)_{v\in V(G)}))$ and write it as $P(v,h)=P_G(v)P_v(h)$ with $P_G\in\distributions(V(G))$ and $\forall v\in V(G):P_v\in\distributions(V(H_v))$. Then
\begin{equation}
\begin{split}
\log\Theta(G,P_G)+\sum_{v\in V(G)}P_G(v)F(H_v,P_v)
 & \le F(G\lexproduct(H_v)_{v\in V(G)},P)  \\
 & \le F(G,P_G)+\sum_{v\in V(G)}P_G(v)F(H_v,P_v)
\end{split}
\end{equation}
where $\displaystyle\log\Theta(G,P_G)$ is the probabilistic refinement of the Shannon capacity.
\end{proposition}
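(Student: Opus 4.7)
The plan is to apply $f$ to the chain of inequalities in the preceding lemma, take logarithms, divide by the exponent of the type, and pass to the limit; the three limits will recover exactly the three quantities in the statement.

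First I would reduce to rational $P$. The functions $F(G\lexproduct(H_v)_{v\in V(G)},\cdot)$, $F(G,\cdot)$, and each $F(H_v,\cdot)$ are continuous by \Cref{prop:probabilisticrefinement}\cref{it:Fcontinuous}. The term $\log\Theta(G,\cdot)$ is also continuous in $P_G$: since the modulus of continuity in \cref{it:Fcontinuous} depends only on $|V(G)|$ and $\norm[1]{P-Q}$ and so applies uniformly to every spectral point, the characterization $\log\Theta(G,\cdot)=\min_{f\in\Delta(\graphs)}F(G,\cdot)$ from \cref{eq:minDeltaprobabilistic} transfers the same modulus to $\log\Theta$. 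Hence both sides of the desired inequality depend continuously on $P$, and I may choose $n\in\mathbb{N}$ with $P\in\distributions[n](V(G\lexproduct(H_v)_{v\in V(G)}))$; this forces $P_G\in\distributions[n](V(G))$ and $P_v\in\distributions[nP_G(v)](V(H_v))$ for each $v$ with $P_G(v)>0$, while those with $P_G(v)=0$ contribute trivial factors $f(K_1)=1$ and are discarded.

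Replacing $n$ by $kn$ in the preceding lemma for each $k\in\mathbb{N}$ and applying $f$, using monotonicity under $\le$, multiplicativity under $\strongproduct$, and $f(\complement{K_N})=N$, I obtain
\begin{equation*}
\alpha(\typegraph{G}{kn}{P_G})\prod_{v}f(\typegraph{H_v}{knP_G(v)}{P_v})\le f(\typegraph{(G\lexproduct(H_v)_{v\in V(G)})}{kn}{P})\le f(\typegraph{G}{kn}{P_G})\prod_{v}f(\typegraph{H_v}{knP_G(v)}{P_v}).
\end{equation*}
Taking $\log$, dividing by $kn$, and rewriting $\frac{1}{kn}\log f(\typegraph{H_v}{knP_G(v)}{P_v})$ as $P_G(v)\cdot\frac{1}{knP_G(v)}\log f(\typegraph{H_v}{knP_G(v)}{P_v})$, the central and rightmost terms converge as $k\to\infty$ to $F(G\lexproduct(H_v)_{v\in V(G)},P)$ and $F(G,P_G)+\sum_v P_G(v)F(H_v,P_v)$ by \Cref{prop:probabilisticrefinement}.

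The step requiring most attention is the leftmost factor $\frac{1}{kn}\log\alpha(\typegraph{G}{kn}{P_G})$. The first inequality of \Cref{prop:convexcombinationbounds}, together with the $\strongproduct$-supermultiplicativity of $\alpha$, makes $k\mapsto\log\alpha(\typegraph{G}{kn}{P_G})$ superadditive, so Fekete's lemma furnishes a limit which, by the remark at the end of \Cref{sec:preliminaries} applied to $Q=\alpha$, equals $\log\Theta(G,P_G)$. Assembling the three limits gives both claimed inequalities for rational $P$, and the continuity established above extends them to arbitrary $P$.
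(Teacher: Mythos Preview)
Your proof is correct and follows essentially the same approach as the paper, which simply writes ``Similar to the proof of \Cref{prop:probmultiplative}''; you have carefully filled in the details the paper leaves implicit, in particular the convergence of $\frac{1}{kn}\log\alpha(\typegraph{G}{kn}{P_G})$ and the continuity of $\log\Theta(G,\cdot)$. One remark: your appeal to \cref{eq:minDeltaprobabilistic} for that continuity is a forward reference to \Cref{prop:capacitywithintype}, but there is no circularity since that proposition relies only on \Cref{prop:probabilisticrefinement} and \Cref{prop:probmultiplative}, not on the present result.
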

\begin{proof}
Similar to the proof of \Cref{prop:probmultiplative}.
\end{proof}
In particular, if $G$ is perfect then the upper and lower bounds are the same. We will primarily be interested in two special cases:
\begin{corollary}\label{cor:joindisjointunion}
Let $H_1,H_2$ be graphs, $p\in[0,1]$, $P_1\in\distributions(V(H_1))$ and $P_2\in\distributions(V(H_2))$. Then
\begin{align}
F(H_1\sqcup H_2,pP_1\oplus(1-p)P_2) & = pF(H_1,P_1)+(1-p)F(H_2,P_2)+h(p)
\intertext{and}
F(H_1+ H_2,pP_1\oplus(1-p)P_2) & = pF(H_1,P_1)+(1-p)F(H_2,P_2).
\end{align}
In particular, $F$ satisfies \cref{it:probadditive}.
\end{corollary}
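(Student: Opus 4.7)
The plan is to specialize Proposition~\ref{prop:Gjoin} to two particular choices of the base graph: $G=\complement{K_2}$, which reproduces the disjoint union, and $G=K_2$, which reproduces the join. Under the factorization $P(v,h)=P_G(v)P_v(h)$ used in that proposition, the distribution $pP_1\oplus(1-p)P_2$ corresponds to $P_G=(p,1-p)$ on the two vertices of $G$, together with $P_{v_1}=P_1$ and $P_{v_2}=P_2$.

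The key step is to observe that for each of these two choices of $G$, the upper and lower bounds in Proposition~\ref{prop:Gjoin} coincide, so the sandwich becomes an equality. For $G=\complement{K_2}$, the strong power $\complement{K_2}^{\strongproduct n}$ is still edgeless, so $\typegraph{\complement{K_2}}{n}{P_G}$ is an edgeless graph on $|\typeclass{n}{P_G}|$ vertices. Additivity and normalization of a spectral point yield $f(\complement{K_m})=m$, and together with eq.~\eqref{eq:typeclasssize} this gives
\begin{equation*}
F(\complement{K_2},P_G)=\lim_{n\to\infty}\frac{1}{n}\log|\typeclass{n}{P_G}|=\entropy(P_G)=h(p),
\end{equation*}
while the identical computation with the independence number in place of $f$ gives $\log\Theta(\complement{K_2},P_G)=h(p)$ as well. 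For $G=K_2$ the strong power is the complete graph $K_{2^n}$, every induced subgraph of it is complete, and $f(K_m)=1$ for all $m$ (since both $K_1\le K_m$ and $K_m\le K_1$ hold, any map between the one-vertex and $m$-vertex edgeless graphs being trivially a homomorphism); consequently $F(K_2,P_G)=\log\Theta(K_2,P_G)=0$.

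Substituting these values into Proposition~\ref{prop:Gjoin} produces the two stated identities, and property (P3) of \Cref{thm:spectralpointtoprobabilistic} is simply the disjoint-union identity restated. I do not foresee any real obstacle: the main work has already been carried out in Proposition~\ref{prop:Gjoin}, and what remains is the routine identification of $F$ and $\log\Theta$ on the two-vertex graphs $\complement{K_2}$ and $K_2$, together with the equally routine check that $pP_1\oplus(1-p)P_2$ on $V(H_1)\sqcup V(H_2)$ matches the product form $P_G(v)P_v(h)$ on the $G$-join.
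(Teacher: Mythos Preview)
Your proposal is correct and follows exactly the paper's approach: specialize Proposition~\ref{prop:Gjoin} to $G=\complement{K_2}$ and $G=K_2$, and verify that in both cases the lower bound $\log\Theta(G,P_G)$ and the upper bound $F(G,P_G)$ agree (equal to $h(p)$ and $0$, respectively). The additional justifications you supply for $f(\complement{K_m})=m$, $f(K_m)=1$, and the identification of the distribution $pP_1\oplus(1-p)P_2$ with the $G$-join factorization are all accurate and simply flesh out what the paper leaves implicit.
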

\begin{proof}
Both statements follow from \Cref{prop:Gjoin}, the first one with $G=\complement{K_2}$ and using $\log\Theta(\complement{K_2},(p,1-p))=h(p)$, while the second one with $G=K_2$ and using $\log\Theta(K_2,(p,1-p))=0$.
\end{proof}

\begin{proposition}
Let $G,H$ be graphs, $\varphi:\complement{H}\to\complement{G}$ a homomorphism and $P\in\distributions(V(H))$. Then $F(H,P)\le F(G,\varphi_*(P))$, i.e. \cref{it:probmonotone} is satisfied.
\end{proposition}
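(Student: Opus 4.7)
The plan is to reduce to the case of rational distributions using continuity and then build a homomorphism between the complements of the relevant type graphs by applying $\varphi$ coordinate-wise, at which point the monotonicity \cref{it:spectralmonotone} of $f$ gives the desired inequality. Since $F(H,\cdot)$ is continuous on $\distributions(V(H))$ by \Cref{prop:probabilisticrefinement}, and the pushforward $\varphi_*$ is affine (hence continuous), the composition $P\mapsto F(G,\varphi_*(P))$ is also continuous. It therefore suffices to fix $m\in\mathbb{N}$ and prove the inequality for $P\in\distributions[m](V(H))$, since these are dense. For such a $P$ the pushforward $\varphi_*(P)$ is again an $m$-type.

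Given such a $P$, I would define $\psi:\typeclass{km}{P}\to V(G)^{km}$ by $\psi(h_1,\dots,h_{km})=(\varphi(h_1),\dots,\varphi(h_{km}))$. The empirical distribution of $\psi(\mathbf{h})$ is the pushforward under $\varphi$ of that of $\mathbf{h}$, so $\psi$ lands in $\typeclass{km}{\varphi_*(P)}$. The key step is then to verify that $\psi$ is a graph homomorphism $\complement{\typegraph{H}{km}{P}}\to\complement{\typegraph{G}{km}{\varphi_*(P)}}$. If $\mathbf{h}\neq\mathbf{h}'$ are non-adjacent in $H^{\strongproduct km}$, the definition of the strong product produces some coordinate $i$ with $h_i\neq h'_i$ and $\{h_i,h'_i\}\notin E(H)$; since $\varphi:\complement{H}\to\complement{G}$ is a homomorphism, this forces $\varphi(h_i)\neq\varphi(h'_i)$ and $\{\varphi(h_i),\varphi(h'_i)\}\notin E(G)$, so $\psi(\mathbf{h})$ and $\psi(\mathbf{h}')$ are distinct and non-adjacent in $G^{\strongproduct km}$, equivalently adjacent in $\complement{G^{\strongproduct km}}$ and hence in its induced subgraph on $\typeclass{km}{\varphi_*(P)}$. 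This yields $\typegraph{H}{km}{P}\le\typegraph{G}{km}{\varphi_*(P)}$.

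Applying \cref{it:spectralmonotone} to $f$, taking $\frac{1}{km}\log$ of both sides, and letting $k\to\infty$ (using the limit characterization from \Cref{prop:probabilisticrefinement}), one obtains $F(H,P)\le F(G,\varphi_*(P))$ for rational $P$, and continuity completes the argument. The only real delicacy is bookkeeping the strong-product adjacency condition when passing to complements: a non-edge in $H^{\strongproduct km}$ is witnessed by a single coordinate at which the vertices are both distinct and non-adjacent in $H$, which is exactly the piece of data that a homomorphism $\complement{H}\to\complement{G}$ is guaranteed to preserve. Everything else is routine method-of-types bookkeeping.
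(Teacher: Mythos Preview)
Your proof is correct and follows essentially the same approach as the paper: reduce to rational types by continuity, use the coordinate-wise map $\varphi^{\strongproduct km}$ (which the paper invokes by name while you spell out the edge-check explicitly), apply monotonicity of $f$, and pass to the limit. The only difference is the level of detail in verifying that the induced map on type classes is a homomorphism of complements.
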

\begin{proof}
We prove the statement for $P\in\distributions[n](V(H))$, the general case follows by continuity. For any $k\in\mathbb{N}$ we have a homomorphism $\varphi^{\strongproduct kn}:\complement{H^{\strongproduct kn}}\to\complement{G^{\strongproduct kn}}$, and the image of $\typeclass{kn}{P}$ is $\typeclass{kn}{\varphi_*(P)}$. Thus $\typegraph{H}{kn}{P}\le\typegraph{G}{kn}{\varphi_*(P)}$. Apply $\frac{1}{kn}\log f$ to both sides and let $k\to\infty$.
\end{proof}
This finishes the proof of \Cref{thm:spectralpointtoprobabilistic}. Now we turn to the converse direction.
\begin{proof}[Proof of \Cref{thm:probabilistictospectralpoint}.]
$K_1\strongproduct K_1$ is isomorphic to $K_1$ and there is only one probability distribution on its vertex set, therefore $F(K_1,P)=2F(K_1,P)=0$ by \cref{it:probmultiplicative}. It follows that $f(K_1)=1$, proving \cref{it:spectralnormalised}.

We prove supermultiplicativity. Let $G,H$ be nonempty graphs, $P_G\in\distributions(V(G))$ and $P_H\in\distributions(V(H))$ such that $\log f(G)=F(G,P_G)$ and $\log f(H)=F(H,P_H)$. Using $\mutualinformation(G:H)_{P_G\otimes P_H}=0$ and \cref{it:probmultiplicative} we have
\begin{equation}
\log f(G\strongproduct H) \ge F(G\strongproduct H,P_G\otimes P_H) = F(G,P_G)+F(H,P_H)=\log f(G)+\log f(H).
\end{equation}

We prove submultiplicativity. Let $G,H$ be nonempty graphs, $P\in\distributions(V(G\strongproduct H))$ such that $\log f(G\strongproduct H)=F(G\strongproduct H,P)$. Let $P_G$ and $P_H$ be the marginals of $P$. Then
\begin{equation}
\log f(G\strongproduct H) = F(G\strongproduct H,P)\le F(G,P_G)+F(H,P_H)\le\log f(G)+\log f(H).
\end{equation}
This proves \cref{it:spectralmultiplicative}.

We prove additivity (\cref{it:spectraladditive}). Let $G,H$ be nonempty graphs, $P_G\in\distributions(V(G))$ and $P_H\in\distributions(V(H))$ and $p\in[0,1]$. Then $pP_G\oplus(1-p)P_H\in\distributions(V(G\sqcup H))$ and any distribution on the union arises in this way. Thus
\begin{equation}
\begin{split}
\log f(G\sqcup H)
 & = \max_{p,P_G,P_H}F(G\sqcup H,pP_G\oplus(1-p)P_H)  \\
 & = \max_{p,P_G,P_H}(pF(G,P_G)+(1-p)F(H,P_H)+h(p))  \\
 & = \max_{p\in[0,1]}(p\log f(G)+(1-p)\log f(H)+h(p))  \\
 & = \log (f(G)+f(H))
\end{split}
\end{equation}
by \cref{it:probadditive}.

We prove that $f$ is monotone (\cref{it:spectralmonotone}). Let $G,H$ be graphs, $\varphi:\complement{H}\to\complement{G}$ be a homomorphism and $P\in\distributions(V(H))$ such that $\log f(H)=F(H,P)$. Then
\begin{equation}
\log f(G)=\max_{Q\in\distributions(V(G))}F(G,Q)\ge F(G,\varphi_*(P))\ge F(H,P)=\log f(H)
\end{equation}
by \cref{it:probmonotone}.

We prove that $F$ is the logarithmic probabilistic refinement of $f$. Let $G$ be a graph and $P\in\distributions[m](V(G))$. $\typegraph{G}{km}{P}$ is vertex-transitive, $F(G,\cdot)$ is concave (\cref{it:probconcave}), therefore
\begin{equation}
\log f(\typegraph{G}{km}{P})=F(\typegraph{G}{km}{P},U)
\end{equation}
where $U$ denotes the uniform distribution. $U$ can be considered a distribution on $V(G)^{km}$, and then all $km$ marginals have distribution $P$. From \cref{it:probmultiplicative} follows by induction that
\begin{equation}\label{eq:probmultiplicativetypeclass}
F(G^{\strongproduct km},U)\le km F(G,P)\le F(G^{\strongproduct km},U)+km\entropy(P)-\entropy(U),
\end{equation}
which in turn implies
\begin{equation}
\begin{split}
\lim_{k\to\infty}\frac{1}{km}F(G^{\strongproduct km},U)
 & \le F(G,P)  \\
 & \le \lim_{k\to\infty}\frac{1}{km}F(G^{\strongproduct km},U)+\entropy(P)-\frac{1}{km}\entropy(U)  \\
 & \le \lim_{k\to\infty}\frac{1}{km}F(G^{\strongproduct km},U).
\end{split}
\end{equation}
\end{proof}

\section{Further properties}\label{sec:furtherproperties}

\subsection{Capacity within a fixed distribution}

Recall that the Shannon capacity $\Theta(G)$ can be expressed as the minimum of $f(G)$ as $f$ runs over the asymptotic spectrum $\Delta(\graphs)$. We prove the analogous statement for the probabilistic refinement.
\begin{proposition}\label{prop:capacitywithintype}
Let $G$ be a graph and $P\in\distributions(V(G))$. Then
\begin{equation}
\Theta(G,P)=\min_{f\in\Delta(\graphs)}f(G,P).
\end{equation}
\end{proposition}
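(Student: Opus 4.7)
The plan is to prove $\Theta(G,P) \le \min_{f \in \Delta(\graphs)} f(G,P)$ and the reverse inequality separately. The easy direction follows from Zuiddam's identity $\alpha(H) \le \Theta(H) = \min_g g(H)$, specialised to $\alpha(H) \le f(H)$ for every $f \in \Delta(\graphs)$ and every graph $H$. Applying this to $H = \typegraph{G}{n}{\ball{\epsilon}{P}}$, taking $n$-th roots, and letting $n \to \infty$ then $\epsilon \to 0$ yields $\Theta(G,P) \le f(G,P)$ by \eqref{eq:constantcompositioncapacity} and \Cref{thm:spectralpointtoprobabilistic}; infimum over $f$ gives one inequality.

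For the converse, reduce by continuity in $P$ (of both sides) to $P \in \distributions[m](V(G))$. Iterating \Cref{prop:convexcombinationbounds} with $Q = P$ and $n = m$ yields $(\typegraph{G}{m}{P})^{\strongproduct k} \le \typegraph{G}{km}{P} \le \complement{K_N} \strongproduct (\typegraph{G}{m}{P})^{\strongproduct k}$ with $N$ polynomial in $k$; applying $\alpha$, taking $k$-th roots, and using $N^{1/k} \to 1$ yields the auxiliary identity $\Theta(\typegraph{G}{k_0 m}{P}) = \Theta(G,P)^{k_0 m}$ for every $k_0 \in \mathbb{N}$. For each $k$ I would then invoke Zuiddam's theorem on the single graph $\typegraph{G}{km}{P}$ to pick $f_k \in \Delta(\graphs)$ with $f_k(\typegraph{G}{km}{P}) = \Theta(G,P)^{km}$. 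By compactness of $\Delta(\graphs)$ in the product topology (standard in Strassen's framework, cf.\ \cite{strassen1988asymptotic,zuiddam2018asymptotic}) combined with a diagonal argument, extract a subsequence converging pointwise to some $f^* \in \Delta(\graphs)$ along $k$-values that are eventually divisible by any prescribed $k_0$.

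The combinatorial input is the inclusion $(\typeclass{k_0 m}{P})^{k/k_0} \subseteq \typeclass{km}{P}$ (valid when $k_0 \mid k$), which gives the induced-subgraph relation $(\typegraph{G}{k_0 m}{P})^{\strongproduct (k/k_0)} \le \typegraph{G}{km}{P}$. Multiplicativity and monotonicity of $f_k$ then yield
\[
f_k(\typegraph{G}{k_0 m}{P})^{k/k_0} \le f_k(\typegraph{G}{km}{P}) = \Theta(G,P)^{km};
\]
taking logarithms, dividing by $km$, and letting $k \to \infty$ along the chosen subsequence gives $\tfrac{1}{k_0 m}\log f^*(\typegraph{G}{k_0 m}{P}) \le \log\Theta(G,P)$. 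Taking the supremum over $k_0$ and invoking the sup formula $F^*(G,P) = \sup_{k_0}\tfrac{1}{k_0 m}\log f^*(\typegraph{G}{k_0 m}{P})$ from \Cref{prop:probabilisticrefinement} yields $f^*(G,P) \le \Theta(G,P)$, completing the proof. The main obstacle will be the combined compactness and diagonal extraction together with the integer divisibility requirement $k_0 \mid k$ needed to make the strong-product embedding apply with integer exponent; the auxiliary identity $\Theta(\typegraph{G}{k_0 m}{P}) = \Theta(G,P)^{k_0 m}$ is the technical glue that transfers the Zuiddam minimization at the single graph $\typegraph{G}{km}{P}$ into a bound on the limit point $f^*$ at the smaller fixed type graph $\typegraph{G}{k_0 m}{P}$ for every $k_0$.
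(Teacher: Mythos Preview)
Your easy direction matches the paper. The converse via compactness of $\Delta(\graphs)$ and a diagonal extraction is a genuinely different route and can be made to work, but one step is wrong as stated.

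The claim that iterating \Cref{prop:convexcombinationbounds} with $Q=P$ yields $\typegraph{G}{km}{P}\le\complement{K_N}\strongproduct(\typegraph{G}{m}{P})^{\strongproduct k}$ with $N$ polynomial in $k$ is false. A single application with $Q=P$ has vanishing entropy exponent, so its $N$ is polynomial in $m$, but you need $k-1$ applications and the accumulated $N$ is a product of $k-1$ such factors, hence exponential in $k$. Equivalently, one direct application of \Cref{lem:transitiveinduced} gives $N$ comparable to $|\typeclass{km}{P}|/|\typeclass{m}{P}|^k$, which by Stirling is exponential in $k$ for fixed $m>1$. Thus $N^{1/k}\not\to 1$ and the ``auxiliary identity'' $\Theta(\typegraph{G}{k_0m}{P})=\Theta(G,P)^{k_0m}$ is not established by this argument. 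The repair is that you only ever use the inequality $f_k(\typegraph{G}{km}{P})=\Theta(\typegraph{G}{km}{P})\le\Theta(G,P)^{km}$, which follows from the one-sided inclusion $(\typegraph{G}{km}{P})^{\strongproduct j}\le\typegraph{G}{jkm}{\ball{\epsilon}{P}}$ alone; with that substitution your compactness argument goes through.

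The paper's argument avoids compactness entirely. It applies Zuiddam's identity once at a finite stage, $\Theta(\typegraph{G}{m}{Q})=\min_f f(\typegraph{G}{m}{Q})$, and then lower-bounds $f(\typegraph{G}{m}{Q})\ge f(G,Q)^m(m+1)^{-|V(G)|}$ via \cref{eq:probmultiplicativetypeclass} and \cref{eq:typeclasssize}. Because this correction is \emph{uniform in $f$}, the minimum over $f$ passes through directly, after which one sends $m\to\infty$ and $Q\to P$ using the equicontinuity estimate \cref{eq:continuityestimate}. This uniform-in-$f$ finite-stage bound is exactly what replaces your compactness and diagonal extraction.
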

\begin{proof}
$\alpha(H)\le f(H)$ for any graph $H$ and spectral point $f$. Therefore
\begin{equation}
\begin{split}
\Theta(G,P)
 & = \lim_{\epsilon\to 0}\limsup_{n\to\infty}\sqrt[n]{\alpha(\typegraph{G}{n}{\ball{\epsilon}{P}})}  \\
 & \le \lim_{\epsilon\to 0}\limsup_{n\to\infty}\sqrt[n]{f(\typegraph{G}{n}{\ball{\epsilon}{P}})} = f(G,P).
\end{split}
\end{equation}

Let $\epsilon>0$ and $Q\in\ball{\epsilon}{P}\cap\distributions[m](V(G))$ for some $m\in\mathbb{N}$. Then $(\typeclass{m}{Q})^k\subseteq\typeclass{km}{Q}\subseteq\typeclass{km}{\ball{\epsilon}{P}}$ for any $k\in\mathbb{N}$, therefore
\begin{equation}
\begin{split}
\limsup_{n\to\infty}\sqrt[n]{\alpha(\typegraph{G}{n}{\ball{\epsilon}{P}})}
 & \ge \limsup_{k\to\infty}\sqrt[km]{\alpha((\typegraph{G}{m}{Q})^{\strongproduct k})}  \\
 & = \sqrt[m]{\Theta(\typegraph{G}{m}{Q})}  \\
 & = \min_{f\in\Delta(\graphs)}\sqrt[m]{f(\typegraph{G}{m}{Q})}  \\
 & \ge \min_{f\in\Delta(\graphs)}f(G,Q)\sqrt[m]{2^{m\entropy(Q)}|\typeclass{m}{Q}|}  \\
 & \ge \min_{f\in\Delta(\graphs)}f(G,Q)(m+1)^{|V(G)|/m}
\end{split}
\end{equation}
by \cref{eq:probmultiplicativetypeclass,eq:typeclasssize}. The functions $Q\mapsto f(G,Q)$ form an equicontinuous family by \cref{eq:continuityestimate}, therefore as we let $m\to\infty$ and $Q\to P$, the limit of the right hand side is $\min_{f\in\Delta(\graphs)}f(G,P)$.
\end{proof}

\subsection{Subadditivity}

Graph entropy, defined in \cite{korner1973coding}, satisfies the following subadditivity property as shown by K\"orner in \cite[Lemma 1.]{korner1986fredman}:
\begin{equation}
\entropy(G\cup H,P)\le\entropy(G,P)+\entropy(H,P)
\end{equation}
for any graphs $G,H$ with common vertex set. The same inequality is true for $\mu(G,P)=\log\vartheta(\complement{G},P)$ \cite[Lemma 2.]{marton1993shannon}. Noting that $\entropy(G,P)=\log\complement{\chi}_f(\complement{G},P)$, the following can be seen as the analogous statement for the logarithmic probabilistic refinement of an arbitrary spectral point:
\begin{proposition}\label{prop:subadditive}
Let $G,H$ be graphs on the same vertex set $V(G)$. Then for any $P\in\distributions(V(G))$ we have $F(G\cap H,P)\le F(G,P)+F(H,P)$. In particular, $\entropy(P)\le F(G,P)+F(\complement{G},P)$.
\end{proposition}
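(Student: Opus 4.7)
The plan is to realize $G\cap H$ as an induced subgraph of $G\strongproduct H$ via the diagonal and then invoke the already established monotonicity and multiplicativity properties. Concretely, let $\Delta:V(G)\to V(G)\times V(G)$ be $v\mapsto(v,v)$. For distinct $u,v$, the vertices $(u,u),(v,v)$ are adjacent in $G\strongproduct H$ exactly when $\{u,v\}\in E(G)$ and $\{u,v\}\in E(H)$, i.e.\ when $\{u,v\}\in E(G\cap H)$. Hence $\Delta$ identifies $G\cap H$ with the induced subgraph of $G\strongproduct H$ on the diagonal, giving $G\cap H\le G\strongproduct H$.

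From here the argument is essentially forced. First, \cref{it:probmonotone} applied to this homomorphism yields $F(G\cap H,P)\le F(G\strongproduct H,\Delta_*(P))$, where $\Delta_*(P)$ is concentrated on the diagonal of $V(G)\times V(G)$ and has both marginals equal to $P$. Second, the upper bound in \cref{it:probmultiplicative} (with the mutual information term dropped, since only one direction is needed) gives $F(G\strongproduct H,\Delta_*(P))\le F(G,P)+F(H,P)$. Chaining the two estimates proves the subadditivity claim.

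For the second part, I would take $H=\complement{G}$; then $G\cap\complement{G}$ is the edgeless graph $\complement{K_{|V(G)|}}$, so the inequality reduces to $F(\complement{K_n},P)\le F(G,P)+F(\complement{G},P)$ and it suffices to verify $F(\complement{K_n},P)=\entropy(P)$. This I would obtain by a short induction on $n$ using \cref{it:probadditive}: peeling off one vertex at a time via $\complement{K_n}=K_1\sqcup\complement{K_{n-1}}$ converts the claim into the Shannon chain rule, with base case $F(K_1,\cdot)=0$ coming from normalization, as $V(K_1)$ admits only one distribution and $f(K_1)=1$. The only creative step is spotting the diagonal embedding; once it is in hand the rest is immediate from the toolbox already developed.
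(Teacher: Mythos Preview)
Your proof is correct and follows exactly the paper's approach: the diagonal map $\Delta$ exhibits $\complement{G\cap H}\to\complement{G\strongproduct H}$ as a homomorphism, then \cref{it:probmonotone} and the first inequality of \cref{it:probmultiplicative} combine to give the result. The only difference is cosmetic: the paper simply asserts $F(\complement{K_{|V(G)|}},P)=\entropy(P)$ (it follows immediately from the definition since $f(\complement{K_N})=N$ and the type-class subgraph is again edgeless), whereas you supply an inductive argument via \cref{it:probadditive}, which is also fine.
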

\begin{proof}
Let $\Delta:V(G)\to V(G)^2$ be the diagonal map $v\mapsto(v,v)$. This is a homomorphism from $\complement{G\cap H}$ to $\complement{G\strongproduct H}$, therefore
\begin{equation}
F(G\cap H,P)\le F(G\strongproduct H,\Delta_*P)\le F(G,P)+F(H,P),
\end{equation}
where we used \cref{it:probmultiplicative,it:probmonotone} and that both marginals of $\Delta_*P$ are equal to $P$.

The second claim follows from the first one and $F(G\cap\complement{G},P)=F(\complement{K_{|V(G)|}},P)=\entropy(P)$.
\end{proof}

It follows that any spectral point $f$ satisfies $|V(G)|=2^{\entropy(U)}\le f(G,U)f(\complement{G},U)\le f(G)f(\complement{G})$, where $G$ is any nonempty graph and $U$ is the uniform distribution on $V(G)$.

\subsection{Graph join}

It is easy to see that value of the spectral points mentioned in the introduction on the join of two graphs is the maximum of their values on the terms. One consequence of \Cref{cor:joindisjointunion} is that this is true for every spectral point.
\begin{proposition}\label{prop:fjoin}
Let $G,H$ be graphs, $f\in\Delta(\graphs)$. Then $f(G+H)=\max\{f(G),f(H)\}$.
\end{proposition}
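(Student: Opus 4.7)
The plan is to reduce the statement to a one-dimensional optimization over the convex combination parameter, using the formula $f(G) = \max_P 2^{F(G,P)}$ from \Cref{prop:ffromF} together with the join identity from \Cref{cor:joindisjointunion}. The key observation is that the join formula has no $h(p)$ entropy term, which makes the resulting objective affine in $p$.

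First I would assume both $G$ and $H$ are nonempty; the case where one of them is empty is trivial since then $G+H$ is isomorphic to the other graph and $f$ of the empty graph is $0$. For the main case, note that $V(G+H) = V(G) \sqcup V(H)$ as a set, so every $P \in \distributions(V(G+H))$ can be written uniquely as $P = pP_G \oplus (1-p)P_H$ for some $p \in [0,1]$ and $P_G \in \distributions(V(G))$, $P_H \in \distributions(V(H))$ (with the usual convention at the endpoints $p \in \{0,1\}$).

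Next I would apply \Cref{prop:ffromF} and then the second identity of \Cref{cor:joindisjointunion}:
\begin{equation}
\log f(G+H) = \max_{p, P_G, P_H} F(G+H, pP_G \oplus (1-p)P_H) = \max_{p, P_G, P_H} \bigl[pF(G,P_G) + (1-p)F(H,P_H)\bigr].
\end{equation}
Since the objective is separated in $P_G$ and $P_H$, the inner maxima give $\log f(G)$ and $\log f(H)$ respectively (again by \Cref{prop:ffromF}), reducing the problem to
\begin{equation}
\log f(G+H) = \max_{p \in [0,1]} \bigl[p \log f(G) + (1-p) \log f(H)\bigr] = \max\{\log f(G), \log f(H)\},
\end{equation}
where the last equality holds because a linear function on $[0,1]$ attains its maximum at an endpoint. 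Exponentiating yields $f(G+H) = \max\{f(G), f(H)\}$.

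There is no real obstacle here; the entire argument is a clean corollary of the machinery already established. The only thing worth flagging is the contrast with the disjoint union case, where the extra $h(p)$ term in \Cref{cor:joindisjointunion} produces the strict additivity $f(G \sqcup H) = f(G) + f(H)$ of \cref{it:spectraladditive} rather than a maximum — it is precisely the absence of that entropy term in the join formula that causes the optimizer to lie at an endpoint and yields $\max$ instead of sum.
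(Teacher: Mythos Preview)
Your proof is correct and follows essentially the same route as the paper: apply \Cref{prop:ffromF}, decompose every distribution on $V(G+H)$ as $pP_G\oplus(1-p)P_H$, invoke the second identity of \Cref{cor:joindisjointunion}, optimize separately in $P_G,P_H$ and then in $p$. Your handling of the empty-graph edge case and the closing remark about the missing $h(p)$ term are nice additions but not needed for the argument.
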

\begin{proof}
Using the second equality in \Cref{cor:joindisjointunion} we have
\begin{equation}
\begin{split}
\log f(G+H)
 & = \max_{P\in\distributions(V(G)\sqcup V(H))}F(G+H,P)  \\
 & = \max_{p\in[0,1]}\max_{\substack{P_G\in\distributions(V(G))  \\  P_H\in\distributions(V(H))}}F(G+H,pP_G\oplus(1-p)P_H)  \\
 & = \max_{p\in[0,1]}\max_{\substack{P_G\in\distributions(V(G))  \\  P_H\in\distributions(V(H))}}pF(G,P_G)+(1-p)F(H,P_H)  \\
 & = \max_{p\in[0,1]}p\log f(G)+(1-p)\log f(H)  \\
 & = \max\{\log f(G),\log f(H)\}.
\end{split}
\end{equation}
\end{proof}

\subsection{Lexicographic product}

From $G\strongproduct H\subseteq G\lexproduct H$, $\strongproduct$-multiplicativity and monotonicity follows that any spectral point is $\lexproduct$-submultiplicative. In fact, all known elements of $\Delta(\graphs)$ are multiplicative with respect to the lexicographic product. For $\vartheta$ this follows from \cite[Section 21.]{knuth1994sandwich}, for $\xi_f$ it is proved in \cite[Theorem 27.]{cubitt2014bounds}, while for $\complement{\chi}_f$ it is an immediate consequence of \cite[Corollary 3.4.5]{scheinerman1997fractional} and $\complement{G\lexproduct H}=\complement{G}\lexproduct\complement{H}$. Although \cite[Theorem 3.]{bukh2018fractional} only states $\strongproduct$-multiplicativity of $\mathcal{H}^{\mathbb{F}}_f$, their proof actually shows $\lexproduct$-multiplicativity as well.

However, it is not clear whether this property, henceforth referred to as
\begin{enumerate}[({L})]
\item\label{it:lexicographicproduct} $f(G\lexproduct H)=f(G)f(H)$ for any pair of graphs $G,H$.
\end{enumerate}
is satisfied by every spectral point. In terms of the probabilistic refinement, we have the following characterization:
\begin{proposition}\label{prop:lexmultiplicative}
Let $f\in\Delta(\graphs)$ be a spectral point and $F$ the logarithm of its probabilistic refinement. The following are equivalent
\begin{enumerate}[(i)]
\item\label{it:propertyL} $f$ satisfies \cref{it:lexicographicproduct}
\item\label{it:probGjoineq} $F(G\lexproduct(H_v)_{v\in V(G)},P)=F(G,P_G)+\sum_{v\in V(G)}P_G(v)F(H_v,P_v)$ for any graphs $G$ and $(H_v)_{v\in V(G)}$, and probability distribution $P\in\distributions(V(G\lexproduct(H_v)_{v\in V(G)}))$, where $P(v,h)=P_G(v)P_v(h)$ with $P_G\in\distributions(V(G))$ and $\forall v\in V(G):P_v\in\distributions(V(H_v))$.
\item\label{it:problexproducteq} $F(G\lexproduct H,P)=F(G,P_G)+\sum_{v\in V(G)}P_G(v)F(H,P_v)$ for any graphs $G$ and $H$, and probability distribution $P\in\distributions(V(G\lexproduct H))$, where $P(v,h)=P_G(v)P_v(h)$ with $P_G\in\distributions(V(G))$ and $\forall v\in V(G):P_v\in\distributions(V(H))$.
\end{enumerate}
\end{proposition}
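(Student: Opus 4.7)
The plan is to prove the cycle (ii) $\Rightarrow$ (iii) $\Rightarrow$ (i) $\Rightarrow$ (ii). The first implication is immediate by specializing all $H_v$ to a common graph $H$. For (iii) $\Rightarrow$ (i), \Cref{prop:ffromF} combined with the hypothesis gives
\begin{equation*}
\log f(G \lexproduct H) = \max_{P_G,(P_v)_v} \Bigl[ F(G, P_G) + \sum_{v \in V(G)} P_G(v)\, F(H, P_v) \Bigr],
\end{equation*}
and the maximum separates: for each $v \in \support P_G$ the optimal choice of $P_v$ contributes $\log f(H)$, and then optimizing $P_G$ gives $\log f(G)$.

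The core is (i) $\Rightarrow$ (ii). By continuity of $F$ it suffices to treat rational $P$; write $P(v,h)=P_G(v)P_v(h)$ with $P_G \in \distributions[n](V(G))$ and $P_v \in \distributions[nP_G(v)](V(H_v))$, and set $K = G \lexproduct (H_v)_{v\in V(G)}$. The upper bound of (ii) is supplied by \Cref{prop:Gjoin}, so only the matching lower bound has to be proved. The key structural claim is the spectral comparison
\begin{equation*}
G^{\strongproduct n}[\typeclass{n}{P_G}] \lexproduct \bigstrongproduct_{v \in V(G)} H_v^{\strongproduct nP_G(v)}[\typeclass{nP_G(v)}{P_v}] \le K^{\strongproduct n}[\typeclass{n}{P}],
\end{equation*}
where the common vertex set is identified via the natural bijection between $\typeclass{n}{P}$ and $\typeclass{n}{P_G} \times \prod_v \typeclass{nP_G(v)}{P_v}$ sending a sequence to its $G$-projection together with, for each $v$, the subsequence of $H_v$-values at the positions where the projection equals $v$. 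Verifying the inequality amounts to checking that every edge of the right-hand side is also an edge of the left-hand side: a case split on whether the two $G$-projections coincide shows that intra-fiber adjacency in $K^{\strongproduct n}$ reduces exactly to adjacency in $\bigstrongproduct_v H_v^{\strongproduct nP_G(v)}[\typeclass{nP_G(v)}{P_v}]$, while inter-fiber adjacency forces the $G$-projections to be adjacent in $G^{\strongproduct n}[\typeclass{n}{P_G}]$, which already produces an edge in the lex product irrespective of the $H_v$-values.

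Applying $f$ to this inequality and using \cref{it:lexicographicproduct} together with \cref{it:spectralmultiplicative} on the left-hand side yields
\begin{equation*}
f\bigl(\typegraph{G}{n}{P_G}\bigr) \prod_{v} f\bigl(\typegraph{H_v}{nP_G(v)}{P_v}\bigr) \le f\bigl(\typegraph{K}{n}{P}\bigr).
\end{equation*}
Taking $\tfrac1n \log$ and passing to the limit along $n \to kn$ as $k \to \infty$ produces the lower bound $F(K,P) \ge F(G,P_G) + \sum_v P_G(v) F(H_v, P_v)$; combined with the upper bound of \Cref{prop:Gjoin} this gives (ii). The main obstacle is identifying and verifying the correct graph inequality; everything else is formal bookkeeping with limits together with the multiplicativity axioms \cref{it:spectralmultiplicative} and \cref{it:lexicographicproduct}.
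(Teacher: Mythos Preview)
Your proof is correct and follows essentially the same route as the paper. The paper uses the identical graph inequality
\[
\typegraph{G}{n}{P_G}\lexproduct\bigstrongproduct_{v\in V(G)}\typegraph{H_v}{nP_G(v)}{P_v}\le\typegraph{(G\lexproduct(H_v)_{v\in V(G)})}{n}{P}
\]
for (i)$\Rightarrow$(ii), applies $\tfrac{1}{n}\log f$ using \cref{it:lexicographicproduct} and \cref{it:spectralmultiplicative}, and lets $n\to\infty$; the remaining implications are handled the same way you do. Your case split verifying the inequality is a bit more explicit than what the paper writes, but the argument is the same.
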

\begin{proof}
\ref{it:propertyL}$\implies$\ref{it:probGjoineq}: We use the following strengthening of the lower bound from \Cref{prop:Gjoin}:
\begin{equation}
\typegraph{G}{n}{P_G}\lexproduct\bigstrongproduct_{v\in V(G)}\typegraph{H_v}{nP_G(v)}{P_v}
\le\typegraph{(G\lexproduct(H_v)_{v\in V(G)})}{n}{P}.
\end{equation}
Apply $\frac{1}{n}\log f$ and let $n\to\infty$. Since $f$ is assumed to be $\lexproduct$-multiplicative, the obtained lower bound on $F(G\lexproduct(H_v)_{v\in V(G)},P)$ matches the upper bound from \Cref{cor:joindisjointunion}.

\ref{it:probGjoineq}$\implies$\ref{it:problexproducteq}: The $G$-join specializes to the lexicographic product when $H=H_v$ for all $v\in V(G)$.

\ref{it:problexproducteq}$\implies$\ref{it:propertyL}: We can perform maximization over $P$ by first maximizing over each $P_v$ separately with fixed $P_G$ and then over $P_G$.
\begin{equation}
\begin{split}
\log f(G\lexproduct H)
 & = \max_{P\in\distributions(V(G\lexproduct H))}F(G\lexproduct H,P)  \\
 & = \max_{P\in\distributions(V(G\lexproduct H))}F(G,P_G)+\sum_{v\in V(G)}P_G(v)F(H,P_v)  \\
 & = \max_{P_G\in\distributions(V(G))}F(G,P_G)+\sum_{v\in V(G)}P_G(v)\max_{P_v\in\distributions(V(H))}F(H,P_v)  \\
 & = \max_{P_G\in\distributions(V(G))}F(G,P_G)+\sum_{v\in V(G)}P_G(v)\log f(H)  \\
 & = \max_{P_G\in\distributions(V(G))}F(G,P_G)+\log f(H)  \\
 & = \log f(G)+\log f(H).
\end{split}
\end{equation}
\end{proof}
A consequence of \Cref{prop:lexmultiplicative} is that the subset of spectral points satisfying \cref{it:lexicographicproduct} is convex (with respect to the convex combination taken on the logarithmic probabilistic refinements).

We do not know if there are any spectral points for which \cref{it:lexicographicproduct} is false. If it turned out that \cref{it:lexicographicproduct} is true for every $f\in\Delta(\graphs)$, then in particular $\Theta(G\strongproduct H)=\Theta(G\lexproduct H)$ would be true for any graphs $G,H$.

\subsection{Logarithmic convexity}

The set of functions satisfying \cref{it:probconcave,it:probmultiplicative,it:probadditive,it:probmonotone} is convex which, by virtue of the bijection proved in \Cref{sec:probabilisticrefinement}, endows $\Delta(\graphs)$ with a convex structure. For any $(G,P)$ the function $f\mapsto \log f(G,P)$ is affine, therefore the logarithmic evaluation map
\begin{equation}
\log\ev_G:f\mapsto\log f(G)=\max_{P\in\distributions(V(G))}\log f(G,P)
\end{equation}
is a convex function on $\Delta(\graphs)$. We have the following interesting consequence.
\begin{proposition}\label{prop:incomparable}
Let $f_0,f_1\in\Delta(\graphs)$ be incomparable in the sense that there are graphs $G_0,G_1$ such that
\begin{align}
f_0(G_0) & < f_1(G_0)  \\
f_1(G_1) & < f_0(G_1).
\end{align}
Then there is an $f\in\Delta(\graphs)$ and a graph $G$ such that $f(G)<\min\{f_0(G),f_1(G)\}$, i.e. $f$ gives a better bound on the Shannon capacity of $G$ than either of $f_0,f_1$.
\end{proposition}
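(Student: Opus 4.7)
The plan is to take $f$ to be the \emph{midpoint} of $f_0$ and $f_1$ in the convex structure on $\Delta(\graphs)$ provided by \Cref{thm:spectralpointtoprobabilistic,thm:probabilistictospectralpoint}, and to exhibit a witness graph of the form $G = G_0^{\sqcup m}\sqcup G_1^{\sqcup n}$ for carefully chosen positive integers $m,n$. Let $F_i = \log f_i(\cdot,\cdot)$ for $i=0,1$ denote the logarithmic probabilistic refinements from \Cref{thm:spectralpointtoprobabilistic}; both satisfy \cref{it:probconcave,it:probmultiplicative,it:probadditive,it:probmonotone}. These conditions are affine, so $F:=\tfrac{1}{2}(F_0+F_1)$ satisfies them as well, and \Cref{thm:probabilistictospectralpoint} produces a spectral point $f\in\Delta(\graphs)$ whose logarithmic probabilistic refinement is $F$. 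By construction $f(G,P)=\sqrt{f_0(G,P)f_1(G,P)}$ pointwise in $P$, so maximizing over $P$ yields the uniform upper bound $f(G)\le\sqrt{f_0(G)f_1(G)}$ for every graph $G$.

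Write $a=f_0(G_0)$, $b=f_1(G_0)$, $c=f_0(G_1)$, $d=f_1(G_1)$, so the incomparability hypothesis reads $a<b$ and $d<c$. For $G = G_0^{\sqcup m}\sqcup G_1^{\sqcup n}$, additivity of spectral points under disjoint union (\cref{it:spectraladditive}) gives $f_0(G)=ma+nc$ and $f_1(G)=mb+nd$, while the bound from the previous paragraph applied to $G_0$ and $G_1$ yields $f(G)=mf(G_0)+nf(G_1)\le m\sqrt{ab}+n\sqrt{cd}$. It therefore suffices to choose $m,n$ such that
\begin{equation*}
m\sqrt{ab}+n\sqrt{cd} < \min\{ma+nc,\;mb+nd\}.
\end{equation*}

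A direct rearrangement turns these two strict inequalities into the single two-sided condition
\begin{equation*}
\frac{\sqrt{d}(\sqrt{c}-\sqrt{d})}{\sqrt{b}(\sqrt{b}-\sqrt{a})} < \frac{m}{n} < \frac{\sqrt{c}(\sqrt{c}-\sqrt{d})}{\sqrt{a}(\sqrt{b}-\sqrt{a})}.
\end{equation*}
After cancelling the common positive factor $(\sqrt{c}-\sqrt{d})/(\sqrt{b}-\sqrt{a})$ from both endpoints, the interval is nonempty iff $\sqrt{c}/\sqrt{a}>\sqrt{d}/\sqrt{b}$, i.e., $bc>ad$; this holds because incomparability yields $b/a>1>d/c$. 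Any positive rational $m/n$ lying in this open interval then gives the required $G$. The only mildly subtle point is checking that the two one-sided inequalities jointly determine a nonempty range of admissible ratios $m/n$; once that elementary computation is done, the desired strict bound follows at once from the convex-combination construction of $f$ and the additivity of spectral points under disjoint union.
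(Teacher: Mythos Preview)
Your argument is correct. You use the same midpoint $f=f_{1/2}$ in the convex structure on $\Delta(\graphs)$ and the same key inequality $f(G)\le\sqrt{f_0(G)f_1(G)}$, but the witness graph and the combinatorics are genuinely different from the paper's. The paper builds auxiliary graphs $G_0'=G_0^{\strongproduct kn_0}\strongproduct\complement{K_m}$ and $G_1'=G_1^{\strongproduct kn_1}$, carefully balancing the parameters so that on a logarithmic scale the four values $\log f_i(G_j')$ sit near the corners of a square, and then takes the \emph{join} $G=G_0'+G_1'$, exploiting \Cref{prop:fjoin} ($f(A+B)=\max\{f(A),f(B)\}$). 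Your construction instead takes a \emph{disjoint union} $G=G_0^{\sqcup m}\sqcup G_1^{\sqcup n}$ and works directly with additivity, reducing everything to an elementary one-line check that the open interval for $m/n$ is nonempty. The paper itself remarks after its proof that $f_{1/2}$ also improves on both $f_0,f_1$ for the disjoint union $G_0'\sqcup G_1'$, so your route is in the same spirit; its advantage is that it bypasses the balancing step (strong powers and the $\complement{K_m}$ factor) entirely and needs only \cref{it:spectraladditive}, not \Cref{prop:fjoin}. The paper's join construction, on the other hand, makes the logarithmic-convexity picture more transparent and yields a witness on which the gap $\min\{\log f_0,\log f_1\}-\log f_{1/2}$ is explicitly quantified in terms of the original gaps.
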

\begin{proof}
Let $F_i(G,P)=\log f_i(G,P)$ (for $i=0,1$) be the (logarithmic) probabilistic refinements and for $\lambda\in[0,1]$ set $F_\lambda(G,P)=(1-\lambda)F_0(G,P)+\lambda F_1(G,P)$ and $f_\lambda(G):=\max_{P\in\distributions(V(G))}2^{F(G,P)}$. Then $f_\lambda\in\Delta(\graphs)$ and $\lambda\mapsto \log f_\lambda(G)$ is convex for any $G$. In particular, $f_{1/2}(G)\le\sqrt{f_0(G)f_1(G)}$ for any graph $G$.

Choose graphs $G_0$ and $G_1$ as in the condition. First we construct new graphs $G_0'$ and $G_1'$ such that $f_0(G_0')\approx f_1(G_1')<f_0(G_1')\approx f_1(G_0)$. To this end, for $\epsilon\in(0,\frac{1}{7})$ choose $n_0,n_1\in\mathbb{N}$ such that
\begin{equation}
\left|\frac{n_0}{n_1}\frac{\log f_1(G_0)-\log f_0(G_0)}{\log f_0(G_1)-\log f_1(G_1)}-1\right|<\epsilon.
\end{equation}
Assume without loss of generality that $f_0(G_0)^{n_0}\le f_1(G_1)^{n_1}$, and choose $k,m\in\mathbb{N}$ such that
\begin{equation}
\left|n_0\log f_0(G_0)+\frac{1}{k}\log m-n_1\log f_1(G_1)\right|<\epsilon n_1(\log f_0(G_1)-\log f_1(G_1)),
\end{equation}
and let $G_0'=G_0^{\strongproduct kn_0}\strongproduct\complement{K_m}$, $G_1'=G_1^{\strongproduct kn_1}$, $G=G_0'+G_1'$. Introducing
\begin{align}
a & := kn_1\log f_1(G_1)  \\
d & := kn_1(\log f_0(G_1)-\log f_1(G_1)),
\end{align}
the choices ensure the following estimates:
\begin{align}
\log f_0(G_0') & = kn_0\log f_0(G_0)+\log m \in (a-\epsilon d,a+\epsilon d)  \\
\log f_1(G_0') & = kn_0\log f_1(G_0)+\log m \in (a+(1-2\epsilon)d,a+(1+2\epsilon) d)  \\
\log f_0(G_1') & = kn_1\log f_0(G_1) = a+d  \\
\log f_1(G_1') & = a.
\end{align}
It follows that
\begin{align}
\log f_0(G) & = \max\{\log f_0(G_0'),\log f_0(G_1')\} = a+d  \\
\log f_1(G) & = \max\{\log f_1(G_0'),\log f_1(G_1')\} > a+(1-2\epsilon)d  \\
\begin{split}
\log f_{1/2}(G)
 & = \max\{\log f_{1/2}(G_0'),\log f_{1/2}(G_1')\}  \\
 & \le\frac{1}{2}\max\{\log f_0(G_0')+\log f_1(G_0'),\log f_0(G_1')+\log f_1(G_1')\}  \\
 & <\frac{1}{2}\max\{(a+\epsilon d)+(a+(1+2\epsilon)d),(a+d)+a\}=a+\frac{1+3\epsilon}{2}d,
\end{split}
\end{align}
therefore $f_{1/2}(G)<\min\{f_0(G),f_1(G)\}$.
\end{proof}
We remark that $f_{1/2}$ also gives some improvement over both $f_0$ and $f_1$ for the graph $G_0'\sqcup G_1'$ when $\epsilon$ is small. Using the upper bound $f_\lambda(G)\le f_0(G)^\lambda f_1(G)^{1-\lambda}$ we also get a new proof of \cite[Corollary 2.]{hu2018bound}.

To illustrate \Cref{prop:incomparable} with a concrete example, let $J^p_n$ denote the graph on $\binom{[n]}{p+1}$ where the subsets $X$ and $Y$ are adjacent iff $p$ does not divide $|X\cap Y|$ \cite{haemers1978upper,bukh2018fractional}, and choose $G_0=J^2_{12}$, and $G_1=C_5$ (five-cycle). Consider the Lov\'asz number and the fractional Haemers bound over $\mathbb{F}_2$, which evaluate to $\vartheta(G_0)=\frac{260}{11}$, $\mathcal{H}^{\mathbb{F}_2}_f(G_0)=12$ \cite[Lemma 12.]{bukh2018fractional}, $\vartheta(G_1)=\sqrt{5}$ \cite{lovasz1979shannon} and $\mathcal{H}^{\mathbb{F}_2}_f(G_1)=\frac{5}{2}$ \cite[Proposition 4.]{bukh2018fractional}. With $k=n_0=1$, $m=10$, $n_1=6$ we get $G_0'=J^2_{12}\strongproduct\complement{K_{10}}$ and $G_1'=C_5^{\strongproduct 6}$ and for $G=G_0'+G_1'$ the bounds
\begin{align}
\vartheta(G) & = \frac{2600}{11} > 236  \\
\mathcal{H}^{\mathbb{F}_2}_f(G) & = \frac{15625}{64} > 244  \\
f_{1/2}(G) & \le \frac{625\sqrt{5}}{8} < 175.
\end{align}

\subsection{Complementary function}

The complementary graph entropy
\begin{equation}
\entropy^*(G,P)=\lim_{\epsilon\to 0}\limsup_{n\to\infty}\frac{1}{n}\log\chi(\typegraph{G}{n}{\ball{\epsilon}{P}})
\end{equation}
was introduced by K\"orner and Longo in \cite{korner1973two}, and can be seen as the probabilistic refinement of the Witsenhausen rate \cite{witsenhausen1976zero}
\begin{equation}
R(G)=\lim_{n\to\infty}\frac{1}{n}\log\chi(G^{\strongproduct n}).
\end{equation}
Complementary graph entropy is related to the Shannon capacity as $\entropy^*(G,P)=\entropy(P)-C(G,P)$ \cite{marton1993shannon}. This expression motivates the following definition.
\begin{definition}
Let $f\in\Delta(\graphs)$ and $F$ its logarithmic probabilistic refinement. We define
\begin{align}
F^*(G,P) & = \entropy(P)-F(\complement{G},P)
\intertext{and the complementary function}
f^*(G) & = \begin{cases}
\max_{P\in\distributions(V(G))}2^{F^*(G,P)} & \text{if $G$ is nonempty}  \\
0 & \text{if $G$ is empty.}
\end{cases}
\end{align}
\end{definition}
We choose to include the complementation in the definition to ease comparison with the original function: by \Cref{prop:subadditive} we have $F^*(G,P)\le F(G,P)$. With this definition
\begin{align}
\entropy(G,P) & = \max_{f\in\Delta(\graphs)}F(\complement{G},P)  \label{eq:graphentropy}  \\
\entropy^*(G,P) & = \max_{f\in\Delta(\graphs)}F^*(\complement{G},P).  \label{eq:complementarygraphentropy}
\end{align}
The non-probabilistic version of \cref{eq:graphentropy} was noted in \cite[Remark 1.2.]{zuiddam2018asymptotic}, whereas \cref{eq:complementarygraphentropy} follows from $\entropy^*(G,P)=\entropy(P)-C(G,P)$ and \Cref{prop:capacitywithintype}.

\begin{proposition}\label{prop:Fstarproperties}
\leavevmode
\begin{enumerate}[(i)]
\item Let $G,H$ be graphs, $p\in[0,1]$, $P_G\in\distributions(V(G))$ and $P_H\in\distributions(V(H))$. Then
\begin{align}
F^*(G\sqcup H,pP_G\oplus(1-p)P_H) & = pF^*(G,P_G)+(1-p)F^*(H,P_H)+h(p)  \\
F^*(G+ H,pP_G\oplus(1-p)P_H) & = pF^*(G,P_G)+(1-p)F^*(H,P_H).
\end{align}
\item Let $G,H$ be graphs, $P\in\distributions(V(G\costrongproduct H))$ and let $P_G$, $P_H$ denote its marginals on $V(G)$ and $V(H)$. Then
\begin{equation}
F^*(G\costrongproduct H,P)\le F^*(G,P_G)+F^*(H,P_H)\le F^*(G\costrongproduct H,P)+\mutualinformation(G:H)_P
\end{equation}
\end{enumerate}
\end{proposition}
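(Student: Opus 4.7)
The strategy is to reduce everything to the corresponding statements for $F$ via the definition $F^*(G,P)=\entropy(P)-F(\complement{G},P)$, using the complementation identities $\complement{G\sqcup H}=\complement{G}+\complement{H}$, $\complement{G+H}=\complement{G}\sqcup\complement{H}$, and $\complement{G\costrongproduct H}=\complement{G}\strongproduct\complement{H}$. Two elementary arithmetic facts will do all the bookkeeping: the entropy identity
\begin{equation*}
\entropy(pP_G\oplus(1-p)P_H)=p\entropy(P_G)+(1-p)\entropy(P_H)+h(p),
\end{equation*}
and the definitional rewriting $\mutualinformation(G:H)_P=\entropy(P_G)+\entropy(P_H)-\entropy(P)$.

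For part (i), I would expand $F^*(G\sqcup H,pP_G\oplus(1-p)P_H)=\entropy(pP_G\oplus(1-p)P_H)-F(\complement{G}+\complement{H},pP_G\oplus(1-p)P_H)$, substitute the second formula of \Cref{cor:joindisjointunion} applied to $\complement{G}+\complement{H}$ (which lacks the $h(p)$ term), and collect to obtain the desired $pF^*(G,P_G)+(1-p)F^*(H,P_H)+h(p)$. The second identity is symmetric: expand $F^*(G+H,\ldots)=\entropy(\ldots)-F(\complement{G}\sqcup\complement{H},\ldots)$, apply the first formula of \Cref{cor:joindisjointunion} (which contributes its own $h(p)$), and watch the two $h(p)$ terms cancel. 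This is essentially just accounting for which side of the duality carries the $h(p)$.

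For part (ii), I would write $F^*(G\costrongproduct H,P)=\entropy(P)-F(\complement{G}\strongproduct\complement{H},P)$ and apply \cref{it:probmultiplicative} to $F$ on $\complement{G}\strongproduct\complement{H}$, which gives
\begin{equation*}
F(\complement{G},P_G)+F(\complement{H},P_H)-\mutualinformation(G:H)_P\le F(\complement{G}\strongproduct\complement{H},P)\le F(\complement{G},P_G)+F(\complement{H},P_H).
\end{equation*}
Substituting into the defining expression and using $\entropy(P)+\mutualinformation(G:H)_P=\entropy(P_G)+\entropy(P_H)$, the upper bound on $F$ yields the lower bound $F^*(G\costrongproduct H,P)\ge F^*(G,P_G)+F^*(H,P_H)-\mutualinformation(G:H)_P$, and the lower bound on $F$ yields the upper bound $F^*(G\costrongproduct H,P)\le F^*(G,P_G)+F^*(H,P_H)$; rearranging gives the claim.

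There is no real obstacle — the only thing to be careful about is tracking signs and the placement of the $h(p)$ and $\mutualinformation(G:H)_P$ terms under complementation. Once the complementation identities and the mutual-information rewriting are in place, both parts fall out by direct substitution.
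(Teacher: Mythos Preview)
Your proposal is correct and follows essentially the same approach as the paper: unwind $F^*$ via the definition, apply the complementation identities $\complement{G\sqcup H}=\complement{G}+\complement{H}$, $\complement{G+H}=\complement{G}\sqcup\complement{H}$, $\complement{G\costrongproduct H}=\complement{G}\strongproduct\complement{H}$, and then invoke \Cref{cor:joindisjointunion} for part (i) and \cref{it:probmultiplicative} for part (ii), tracking the $h(p)$ and $\mutualinformation(G:H)_P$ terms exactly as you describe.
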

The proof is a straightforward calculation (see \Cref{sec:proof}).

\begin{theorem}
$f^*(\complement{K_n})=n$ and for any pair of graphs $G,H$ we have
\begin{align}
f^*(G\costrongproduct H) & = f^*(G)f^*(H)  \\
f^*(G\sqcup H) & = f^*(G)+f^*(H)  \\
f^*(G+ H) & = \max\{f^*(G),f^*(H)\}.
\end{align}

Suppose that $f$ satisfies \cref{it:lexicographicproduct}. Then $f^*$ also satisfies \cref{it:lexicographicproduct} and in addition, $H\le G$ implies $f^*(H)\le f(G)$ for any graphs $G,H$.
\end{theorem}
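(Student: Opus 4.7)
The plan is to reduce each of the stated identities to the corresponding property of $F$ via the defining identity $F^*(G,P)=\entropy(P)-F(\complement{G},P)$, together with the formulas already collected in \Cref{prop:Fstarproperties}. Each claim then follows the template of \Cref{thm:probabilistictospectralpoint} and \Cref{prop:fjoin}: start from an additive or multiplicative formula for $F^*$, maximize over the relevant distributions, and invoke the definition $\log f^*(G)=\max_{P}F^*(G,P)$.

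For the normalization, write $K_n=K_1+K_1+\cdots+K_1$ and apply \Cref{prop:fjoin} to obtain $f(K_n)=1$. Combined with the two-sided bound $0\le F(K_n,P)\le\log f(K_n)=0$ (from \Cref{prop:probabilisticrefinement} and \Cref{prop:ffromF}), this forces $F(K_n,\cdot)\equiv 0$, so $F^*(\complement{K_n},P)=\entropy(P)$, and the uniform distribution achieves the max, yielding $f^*(\complement{K_n})=n$.

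For the $\costrongproduct$, $\sqcup$ and $+$ statements I apply \Cref{prop:Fstarproperties} directly. The costrong identity uses both inequalities of part~(ii): the right-hand inequality, maximized over $P$, gives submultiplicativity of $f^*$; the left-hand inequality, evaluated at the product distribution $P=P_G\otimes P_H$ (where the mutual information vanishes) and then maximized over $P_G$ and $P_H$, gives supermultiplicativity. The $\sqcup$ and $+$ identities follow from the two formulas in part~(i) by the same optimization over $p\in[0,1]$ used in the proofs of \Cref{thm:probabilistictospectralpoint} (for additivity) and \Cref{prop:fjoin}, after observing that every probability distribution on $V(G)\sqcup V(H)$ has the form $pP_G\oplus(1-p)P_H$.

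The (L)-clause hinges on $\complement{G\lexproduct H}=\complement{G}\lexproduct\complement{H}$. Applying characterization~\ref{it:problexproducteq} of \Cref{prop:lexmultiplicative} to the pair $(\complement{G},\complement{H})$ gives
\[
F(\complement{G\lexproduct H},P)=F(\complement{G},P_G)+\sum_{v\in V(G)}P_G(v)F(\complement{H},P_v),
\]
and combining this with the chain rule $\entropy(P)=\entropy(P_G)+\sum_v P_G(v)\entropy(P_v)$ for the factorization $P(v,h)=P_G(v)P_v(h)$ yields the analogous identity for $F^*$. The implication \ref{it:problexproducteq}$\implies$\ref{it:propertyL} in \Cref{prop:lexmultiplicative} only uses the defining formula $\log f^*(\cdot)=\max_PF^*(\cdot,P)$, so it transfers verbatim to give $f^*(G\lexproduct H)=f^*(G)f^*(H)$. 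The final assertion $H\le G\implies f^*(H)\le f(G)$ in fact needs no appeal to (L): \Cref{prop:subadditive} gives $F^*\le F$ pointwise and hence $f^*\le f$, while $f(H)\le f(G)$ is property~\cref{it:spectralmonotone}, so $f^*(H)\le f(H)\le f(G)$. The only step requiring any care---and hence the main obstacle---is the chain-rule bookkeeping in the (L) part, where one must verify that the factorization of $P$ propagates correctly through complementation; none of the arguments is deep.
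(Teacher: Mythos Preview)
Your treatment of the first four identities and of the (L)-multiplicativity of $f^*$ matches the paper's approach essentially verbatim: the paper says these are ``identical to the proof of parts of \Cref{thm:probabilistictospectralpoint} and \Cref{prop:fjoin}'', and for (L) it invokes \Cref{prop:lexmultiplicative} applied to $(\complement{G},\complement{H})$ exactly as you do.

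The one substantive divergence is in the final clause. Your argument $f^*(H)\le f(H)\le f(G)$ is correct and indeed does not use (L); it proves the inequality exactly as stated. The paper, however, actually proves the stronger conclusion $f^*(H)\le f^*(G)$ (monotonicity of $f^*$ itself), and this \emph{does} seem to require (L). Its argument is different from yours: given $\varphi:\complement{H}\to\complement{G}$, it introduces the auxiliary graph $H'$ on $V(H)$ with $\{u,v\}\in E(H')$ iff $\{\varphi(u),\varphi(v)\}\in E(G)$ or $\varphi(u)=\varphi(v)$; then $H'\subseteq H$ gives $f^*(H)\le f^*(H')$, and recognizing $H'$ as the $G$-join of the complete graphs $(K_{\varphi^{-1}(v)})_{v\in V(G)}$ allows \Cref{prop:lexmultiplicative} to compute $\log f^*(H')\le\log f^*(G)$. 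So your shortcut is valid for the literal statement but does not recover what the paper's proof actually establishes; the stated inequality $f^*(H)\le f(G)$ appears to be a typo for $f^*(H)\le f^*(G)$, and for that version your argument would not suffice.
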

\begin{proof}
The first part is identical to the proof of parts of \Cref{thm:probabilistictospectralpoint} and \Cref{prop:fjoin}.

Suppose that $f$ is $\lexproduct$-multiplicative and let $G,H$ be nonempty graphs. By \Cref{prop:lexmultiplicative},
\begin{equation}
\begin{split}
\log f^*(G\lexproduct H)
 & = \max_{P\in V(G\lexproduct H)}F^*(G\lexproduct H,P)  \\
 & = \max_{P\in V(G\lexproduct H)}\entropy(P)-F(\complement{G}\lexproduct\complement{H},P)  \\
 & = \max_{P\in V(G)}\entropy(P_G)-F(\complement{G},P_G)+\sum_{v\in V(G)}P_G(v)\max_{P_v\in V(H)}\left(\entropy(P_v)-F(\complement{H},P_v)\right)  \\
 & = \max_{P\in V(G)}F^*(G,P_G)+\sum_{v\in V(G)}P_G(v)\max_{P_v\in V(H)}F^*(H,P_v)  \\
 & = \max_{P\in V(G)}F^*(G,P_G)+\sum_{v\in V(G)}P_G(v)\log f^*(H)  \\
 & = \log f^*(G)+\log f^*(H).
\end{split}
\end{equation}

Suppose again that $f$ is $\lexproduct$-multiplicative and let $G,H$ be nonempty graphs such that $H\le G$. Let $\varphi:\complement{H}\to\complement{G}$ be a homomorphism. Define $H'$ as the graph with $V(H')=V(H)$ and $\{u,v\}\in E(H')$ iff $\{\varphi(u),\varphi(v)\}\in E(G)$ or $\varphi(u)=\varphi(v)$ (but $u\neq v$). Then $H'\subseteq H$, therefore
\begin{equation}
\log f^*(H')=\max_{P\in\distributions(V(H'))}\entropy(P)-F(\complement{H'},P)\ge\max_{P\in\distributions(V(H))}\entropy(P)-F(\complement{H},P)=\log f^*(H).
\end{equation}
On the other hand, $H'$ is the $G$-join of the family of complete graphs $(K_{\varphi^{-1}(v)})_{v\in V(G)}$, therefore
\begin{equation}
\begin{split}
\log f^*(H')
 & = \max_{P\in\distributions(V(H'))}\entropy(P)-F(\complement{H'},P)  \\
 & = \max_{P\in\distributions(V(H'))}\entropy(P)-F(\complement{G}\lexproduct(\complement{K_{\varphi^{-1}(v)}})_{v\in V(G)},P)  \\
 & = \max_{P_G\in\distributions(\varphi(V(H)))}\entropy(P_G)-F(\complement{G},P_G)+\sum_{v\in\varphi(V(H))}P_G(v)\max_{P_v\in\distributions(\varphi^{-1}(v))}\left(\entropy(P_v)-F(\complement{K_{\varphi^{-1}(v)}},P_v)\right)  \\
 & = \max_{P_G\in\distributions(\varphi(V(H)))}\entropy(P_G)-F(\complement{G},P_G)  \\
 & \le \log f^*(G)
\end{split}
\end{equation}
by \Cref{prop:lexmultiplicative} and using $F(\complement{K_{\varphi^{-1}(v)}},P_v)=\entropy(P_v)$.
\end{proof}
For example, the Lov\'asz theta function $\vartheta$ satisfies $\log\vartheta(G,P)+\log\vartheta(\complement{G},P)=\entropy(P)$ by \cite[Theorem 2.]{marton1993shannon}, which implies $\vartheta^*(G)=\vartheta(G)$ and the well-known property that $\vartheta$ is also multiplicative under the costrong product.

The complementary function for $\complement{\chi}_f$ is the independence number $\alpha$ (this is a consequence of the equality $\complement{\chi_f}(\complement{H})\alpha(H)=|V(H)|$, valid for vertex-transitive $H$). One also verifies that
\begin{align}
\min_{f\in\Delta(\graphs)}f^*(G) & = \alpha(G)  \\
\max_{f\in\Delta(\graphs)}f^*(G) & = 2^{R(\complement{G})}.
\end{align}

\subsection{Splitting}

A graph $G$ is called strongly splitting if $\entropy(P)=\entropy(G,P)+\entropy(\complement{G},P)$ for every $P\in\distributions(V(G))$. In \cite{korner1988graphs} K\"orner and Marton conjectured that a graph is strong splitting if and only if it is perfect, which was proved in \cite[1.2 Theorem]{csiszar1990entropy}. This motivates the following definition.
\begin{definition}
A graph $G$ strongly splits $F$ if $\entropy(P)=F(G,P)+F(\complement{G},P)$ for all $P\in\distributions(V(G))$.
\end{definition}
For example, perfect graphs strongly split $F$ for every spectral point $f$, while every graph strongly splits $\log\vartheta$. Clearly, if $G$ strongly splits $F$ then $\complement{G}$ as well as any induced subgraph of $G$ strongly splits $F$. In terms of the complementary function $F^*$ we may say that $G$ strongly splits $F$ if $F(G,\cdot)=F^*(G,\cdot)$.
\begin{proposition}
If $G$ and $H$ both strongly split $F$ then so do $G\sqcup H$ and $G\lexproduct H$.
\end{proposition}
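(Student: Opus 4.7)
The plan is to handle the two claims in parallel, reducing each to the strong splitting hypothesis on $G$ and $H$ together with the known behaviour of $F$ under the relevant operation and the chain rule for Shannon entropy.

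For $G\sqcup H$, I would write an arbitrary $P\in\distributions(V(G)\sqcup V(H))$ as $pP_G\oplus(1-p)P_H$ and exploit $\complement{G\sqcup H}=\complement{G}+\complement{H}$. Both identities of \Cref{cor:joindisjointunion} then apply and give
\begin{align*}
F(G\sqcup H,P) &= pF(G,P_G)+(1-p)F(H,P_H)+h(p), \\
F(\complement{G\sqcup H},P) &= pF(\complement{G},P_G)+(1-p)F(\complement{H},P_H).
\end{align*}
Adding, using strong splitting of $G$ and $H$, and the identity $\entropy(pP_G\oplus(1-p)P_H)=p\entropy(P_G)+(1-p)\entropy(P_H)+h(p)$, the right hand side collapses to $\entropy(P)$.

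For $G\lexproduct H$, the strategy is a sandwich. Decompose $P(v,h)=P_G(v)P_v(h)$ and apply the upper bound from \Cref{prop:Gjoin} once to $G\lexproduct H$ and once to $\complement{G\lexproduct H}=\complement{G}\lexproduct\complement{H}$. Summing the two upper bounds and invoking strong splitting on each factor yields
\begin{equation*}
F(G\lexproduct H,P)+F(\complement{G\lexproduct H},P)\le\entropy(P_G)+\sum_{v\in V(G)}P_G(v)\entropy(P_v)=\entropy(P),
\end{equation*}
the last equality being the chain rule for Shannon entropy. The matching lower bound is precisely the second claim of \Cref{prop:subadditive}, so equality holds throughout.

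There is no real obstacle: the only slightly delicate point is the edge cases where $p\in\{0,1\}$ or some $P_G(v)$ vanishes, which I would dispatch by the remark following \Cref{prop:probabilisticrefinement} that $F(G,P)$ depends only on $G[\support P]$ and $P$. What I find conceptually noteworthy is that the lex-product argument does not need to assume \cref{it:lexicographicproduct}: the upper bound from \Cref{prop:Gjoin} is forced to be tight by the matching lower bound coming from subadditivity, so strong splitting propagates to $G\lexproduct H$ for every spectral point, not merely for the $\lexproduct$-multiplicative ones.
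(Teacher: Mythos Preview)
Your proposal is correct and follows essentially the same route as the paper: for $G\sqcup H$ you add the two identities of \Cref{cor:joindisjointunion} and use the entropy chain rule, and for $G\lexproduct H$ you sandwich between the second claim of \Cref{prop:subadditive} and the sum of two instances of the upper bound in \Cref{prop:Gjoin}. Your closing observation that the upper bound of \Cref{prop:Gjoin} is forced to be tight here is also noted in the paper immediately after the proof.
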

\begin{proof}
Let $P_G\in\distributions(V(G))$, $P_H\in\distributions(V(H))$ and $p\in[0,1]$. Then
\begin{equation}
\begin{split}
& F(G\sqcup H,pP_G\oplus(1-p)P_H)+F(\complement{G\sqcup H},pP_G\oplus(1-p)P_H)  \\
 & = F(G\sqcup H,pP_G\oplus(1-p)P_H)+F(\complement{G}+\complement{H},pP_G\oplus(1-p)P_H)  \\
 & = pF(G,P_G)+(1-p)F(H,P_H)+h(p)+pF(\complement{G},P_G)+(1-p)F(\complement{H},P_H)  \\
 & = p\entropy(P_G)+(1-p)\entropy(P_H)+h(p)=\entropy(pP_G\oplus(1-p)P_H).
\end{split}
\end{equation}

For the lexicographic product we use the upper bound from \Cref{prop:Gjoin} and \Cref{prop:subadditive}. Let $P\in\distributions(V(G\lexproduct H))$, $P_G$ its marginal on $V(G)$ and for $v\in V(G)$ let $P_v$ be the conditional distribution such that $P(v,h)=P_G(v)P_v(h)$. Then
\begin{equation}
\begin{split}
\entropy(P)
 & \le F(G\lexproduct H,P)+F(\complement{G\lexproduct H},P)  \\
 & = F(G\lexproduct H,P)+F(\complement{G}\lexproduct\complement{H},P)  \\
 & \le F(G,P_G)+\sum_{v\in V(G)}P_G(v)F(H,P_v)+F(\complement{G},P_G)+\sum_{v\in V(G)}P_G(v)F(\complement{H},P_v)  \\
 & = \entropy(P_G)+\sum_{v\in V(G)}P_G(v)\entropy(P_v)=\entropy(P).
\end{split}
\end{equation}
\end{proof}
It also follows that the upper bound in \Cref{prop:Gjoin} is an equality in this case.

\section{Convex corners}\label{sec:convexcorners}

The logarithm of the probabilistic refinement of the fractional clique cover number is equal to the graph entropy of the complement:
\begin{equation}
\begin{split}
\log\complement{\chi}_f(G,P)
 & = \lim_{\epsilon\to0}\lim_{n\to\infty}\frac{1}{n}\log\complement{\chi}_f(G^{\strongproduct n}[\typeclass{n}{\ball{\epsilon}{P}}])  \\
 & = \lim_{\epsilon\to0}\lim_{n\to\infty}\frac{1}{n}\log\chi_f(\complement{G}^{\costrongproduct n}[\typeclass{n}{\ball{\epsilon}{P}}])  \\
 & = \entropy(\complement{G},P).
\end{split}
\end{equation}
In \cite[3.1 Lemma]{csiszar1990entropy}, graph entropy was expressed as a special case of a new entropy concept, as the entropy function with respect to the vertex packing polytope. In this section we show that the logarithmic probabilistic refinement of every spectral point on a fixed graph is the entropy function of a convex corner and provide a characterization of convex corner-valued graph parameters arising in this way.

\begin{definition}[\cite{fulkerson1971blocking,grotschel1986relaxations}]
Let $\mathcal{X}$ be a finite set. A convex corner on $\mathcal{X}$ is a subset $A\subseteq\mathbb{R}_{\ge 0}^{\mathcal{X}}$ which is compact, convex, has nonempty interior and for any $a\in A$ and $0\le a'\le a$ (componentwise) satisfies $a'\in A$. The antiblocker of $A$ is the set
\begin{equation}
A^*=\setbuild{b\in\mathbb{R}_{\ge 0}^{\mathcal{X}}}{\forall a\in A:\langle a,b\rangle\le 1},
\end{equation}
where $\langle\cdot,\cdot\rangle$ is the standard inner product.
\end{definition}
The antiblocker operation is an involution, i.e. $(A^*)^*=A$ for any convex corner $A$.
\begin{definition}[\cite{csiszar1990entropy}]
The entropy of $P\in\distributions(\mathcal{X})$ with respect to $A$ is
\begin{equation}
\entropy_A(P)=\min_{a\in A}-\sum_{x\in\mathcal{X}}P(x)\log a_x,
\end{equation}
where $\log 0$ is interpreted as $-\infty$.
\end{definition}
For example, the unit corner $S=\setbuild{\mathbb{R}_{\ge 0}^{\mathcal{X}}}{\sum_{x\in\mathcal{X}}a_x\le 1}$ is a convex corner and $\entropy_S(P)=\entropy(P)$ is the Shannon entropy, whereas $\entropy_{[0,1]^{\mathcal{X}}}(P)=0$. Examples of convex corners arising in graph theory are the vertex packing polytope $\vertexpackingpolytope(G)\subseteq\mathbb{R}_{\ge0}^{V(G)}$, which is the convex hull of the characteristic vectors of independent sets of $G$, and the theta body $\thetabody(G)=\setbuild{(\langle u_v,c\rangle^2)_{v\in V(G)}}{(u,c)\in T(G)}\subseteq\mathbb{R}_{\ge0}^{V(G)}$, where $T(G)$ is the set of orthonormal representations with handle, i.e. $(u,c)\in T(G)$ if $u:V(G)\to\mathbb{R}^n$ and $c\in\mathbb{R}^n$ for some $n$, $\norm{u_v}=\norm{c}=1$ for all $v\in V(G)$ and $\langle u_v,u_w\rangle=0$ whenever $\{v,w\}\in E(G)$ \cite{grotschel1986relaxations}.

The entropy with respect to $A$ is the minimum of a set of affine functions, therefore it is a concave function of the distribution. The entropy and the antiblocker are related as $\entropy(P)=\entropy_A(P)+\entropy_{A^*}(P)$ \cite[2.4 Corollary]{csiszar1990entropy}.

\begin{lemma}[{\cite[2.1 Lemma]{csiszar1990entropy}}]\label{lem:subcorner}
For two convex corners $A,B\subseteq\mathbb{R}_{\ge 0}^{\mathcal{X}}$, we have $\entropy_A(P)\ge\entropy_B(P)$ for all $P\in\distributions(\mathcal{X})$ if and only if $A\subseteq B$.
\end{lemma}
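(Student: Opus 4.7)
The plan is to treat the two implications asymmetrically. The direction ``$A\subseteq B\Rightarrow \entropy_A\ge\entropy_B$'' is immediate, since $\entropy_A(P)$ is the minimum of the functional $a\mapsto-\sum_x P(x)\log a_x$ over $A$, and enlarging the feasible set to $B$ can only decrease the minimum.

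For the converse I would argue by contrapositive: given $a^*\in A\setminus B$, I produce a single $P\in\distributions(\mathcal{X})$ with $\entropy_A(P)<\entropy_B(P)$. The first step is a standard Hahn--Banach separation between the compact convex set $B$ and the point $a^*$, which gives a vector $c\in\mathbb{R}^{\mathcal{X}}$ and $\beta\in\mathbb{R}$ with $\langle c,a^*\rangle>\beta\ge\sup_{b\in B}\langle c,b\rangle$. Since $B$ is downward closed and $a^*\ge 0$, replacing $c$ by its componentwise positive part preserves both inequalities (for any $b\in B$, the vector obtained by zeroing out the coordinates where $c_x<0$ still lies in $B$ and has the same inner product with the modified $c$), so I may assume $c\ge 0$ and in particular $\langle c,a^*\rangle>0$.

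The key step is to convert this additive separation into a logarithmic one. I would set $P(x)=c_x a^*_x/\langle c,a^*\rangle$ and apply Jensen's inequality (concavity of $\log$) to obtain, for every $b\in B$,
\begin{equation*}
\sum_x P(x)\log\frac{b_x}{a^*_x}\le\log\sum_x P(x)\frac{b_x}{a^*_x}=\log\frac{\langle c,b\rangle}{\langle c,a^*\rangle}\le\log\frac{\beta}{\langle c,a^*\rangle}<0.
\end{equation*}
Rearranging and taking the infimum over $b\in B$ (attained by compactness) yields $\entropy_B(P)\ge-\sum_x P(x)\log a^*_x+\log\frac{\langle c,a^*\rangle}{\beta}>-\sum_x P(x)\log a^*_x\ge\entropy_A(P)$, which is the desired contradiction. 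The main obstacle I would expect is precisely this passage from a linear to a logarithmic separation; the Jensen step above is the device that accomplishes it, and degenerate coordinates where $a^*_x=0$ cause no trouble since then $P(x)=0$ and the corresponding term drops out under the convention $0\log 0=0$.
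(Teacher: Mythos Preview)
The paper does not prove this lemma: it is stated with a citation to \cite[2.1 Lemma]{csiszar1990entropy} and used as a black box, so there is no in-paper argument to compare against.

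Your proof is correct. The easy direction is indeed immediate from the definition. For the hard direction, your separation-plus-Jensen argument works; a few small points are worth making explicit. After replacing $c$ by $c^+$ you should note that $c^+\neq 0$ (since $\langle c^+,a^*\rangle\ge\langle c,a^*\rangle>\beta\ge\langle c,0\rangle=0$), and that because $B$ has nonempty interior the supremum $\sup_{b\in B}\langle c^+,b\rangle$ is strictly positive, so your $\beta$ may be taken strictly positive and the logarithm $\log(\beta/\langle c,a^*\rangle)$ is finite. In the Jensen step, when you simplify $\sum_x P(x)\,b_x/a^*_x$ you are summing only over $\support P$, so the result is $\le \langle c,b\rangle/\langle c,a^*\rangle$ rather than equal; this only helps. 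Finally, it is worth saying that $-\sum_x P(x)\log a^*_x$ is finite because $P(x)>0$ forces $a^*_x>0$ and $a^*$ lies in the compact set $A$, so the strict inequality $\entropy_B(P)>\entropy_A(P)$ is genuine.
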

By \Cref{lem:subcorner}, a convex corner can be recovered from its entropy function. This is true essentially because the entropy is also the minimum of a linear objective function on $\Lambda(A\cap\mathbb{R}_{>0}^{\mathcal{X}})$ where $\Lambda(a)=(-\log a_x)_{x\in\mathcal{X}}$. However, it is not immediately clear when is a function on probability distributions the entropy function of a convex corner. This question is answered by the following proposition.
\begin{proposition}\label{prop:entropycharacterization}
Let $\mathcal{X}$ be a finite set and $F:\distributions(\mathcal{X})\to\mathbb{R}$. The following conditions are equivalent:
\begin{enumerate}[(i)]
\item\label{it:convexcornerentropy} $F=\entropy_A$ for some convex corner $A$ on $\mathcal{X}$
\item\label{it:concaveHminusconcave} $F$ and $\entropy-F$ are concave.
\end{enumerate}
\end{proposition}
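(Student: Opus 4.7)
The forward direction \ref{it:convexcornerentropy} $\Rightarrow$ \ref{it:concaveHminusconcave} should be immediate from the definition: $\entropy_A(P) = \min_{a \in A}\sum_x P(x)(-\log a_x)$ is the pointwise minimum of a family of linear functions of $P$, hence concave, and by the identity $\entropy_A + \entropy_{A^*} = \entropy$ recalled just above, $\entropy - F = \entropy_{A^*}$ is concave for the same reason.

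For the converse \ref{it:concaveHminusconcave} $\Rightarrow$ \ref{it:convexcornerentropy}, writing $G = \entropy - F$, my plan is to define
\begin{equation*}
A = \left\{a \in \mathbb{R}_{\ge 0}^\mathcal{X} : -\sum_x P(x)\log a_x \ge F(P) \text{ for all } P \in \distributions(\mathcal{X})\right\}
\end{equation*}
and $B$ analogously with $G$ replacing $F$, then prove the duality $A = B^*$. Each of $A, B$ is visibly closed, down-closed, bounded (test against $P = \delta_x$), and has nonempty interior (take $a = \epsilon\mathbf{1}$ with $\epsilon$ small, using boundedness of $F$ on the simplex), so once $A = B^*$ is established, $A$ inherits convexity from the antiblocker operation and qualifies as a convex corner. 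To show $A \subseteq B^*$, I would take $a \in A$ and $b \in B$ with $\sum_x a_x b_x > 0$ and feed the distribution $P_x = a_x b_x/\sum_y a_y b_y$ into the defining inequalities of $A$ and $B$; summed, they rewrite as $\entropy(P) - \log\sum_y a_y b_y \ge F(P) + G(P) = \entropy(P)$, forcing $\sum_x a_x b_x \le 1$. For $B^* \subseteq A$, the concavity of $G$ supplies at each $P$ in the relative interior of $\distributions(\mathcal{X})$ a supergradient representative $c_G(P) \in \mathbb{R}^\mathcal{X}$ with $\sum_x P(x) c_G(P)_x = G(P)$ and $\sum_x Q(x) c_G(P)_x \ge G(Q)$ for all $Q \in \distributions(\mathcal{X})$; then $b^P_x := 2^{-c_G(P)_x}$ lies in $B$, so $\sum_x a_x 2^{-c_G(P)_x} \le 1$ for any $a \in B^*$, and Jensen's inequality applied to the concave logarithm with weights $P$ unpacks exactly to $-\sum_x P(x)\log a_x \ge F(P)$.

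Once $A = B^*$ is in hand, the identity $\entropy_A = F$ follows: the inequality $\ge$ is the defining property of $A$, while $\le$ is obtained by the symmetric construction $a^P_x := 2^{-c_F(P)_x}$ from a supergradient $c_F(P)$ of $F$, which lies in $A$ and realizes $-\sum_x P(x)\log a^P_x = F(P)$. Boundary points of the simplex will be handled by approximation, using that the two concavity hypotheses together with continuity of $\entropy$ force $F$ to be continuous on all of $\distributions(\mathcal{X})$ (both $F$ and $\entropy - F$ are upper semi-continuous, so $F = \entropy - (\entropy - F)$ is also lower semi-continuous). The main obstacle is the convexity of $A$, which is not automatic from concavity of $F$ alone: a direct attempt via Jensen applied to $-\log$ on a convex combination points in the wrong direction, and indeed one can exhibit concave $F$ with $\entropy - F$ nonconcave for which the above-defined $A$ genuinely fails to be convex. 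The role of the second hypothesis is precisely to supply enough supergradients $c_G(P)$ to populate $B$ richly enough that its antiblocker coincides with $A$.
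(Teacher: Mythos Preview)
Your argument is correct and takes a genuinely different route from the paper's for \ref{it:concaveHminusconcave}$\Rightarrow$\ref{it:convexcornerentropy}. The paper also forms your set $A$ and obtains $F=\entropy_A$, but it verifies convexity of $A$ by a direct calculation: for $a,b\in A\cap\mathbb{R}_{>0}^{\mathcal{X}}$ and $\lambda\in(0,1)$ it sets $q_x=\lambda a_x/(\lambda a_x+(1-\lambda)b_x)$, expands $\log(\lambda a_x+(1-\lambda)b_x)=q_x\log(\lambda a_x)+(1-q_x)\log((1-\lambda)b_x)+h(q_x)$, introduces auxiliary distributions $Q,R$ with $P=qQ+(1-q)R$, and after invoking concavity of $\entropy-F$ once arrives at $\langle P,\Lambda(\lambda a+(1-\lambda)b)\rangle\ge F(P)+d(q\|\lambda)\ge F(P)$. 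You instead introduce the companion set $B$ built from $G=\entropy-F$ and prove $A=B^*$, after which convexity is free because antiblockers are intersections of half-spaces. Your $A\subseteq B^*$ step via the distribution $P_x\propto a_xb_x$ is slick, and the $B^*\subseteq A$ step makes the role of the second hypothesis transparent: concavity of $G$ is exactly what supplies the supergradient vectors $b^P\in B$ needed to squeeze $B^*$ down to $A$. Your route also yields $A^*=B$ without extra work. The paper's computation is self-contained but somewhat opaque; yours exposes the underlying antiblocker duality.

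One small slip: concavity gives \emph{lower} semicontinuity at boundary points of the simplex, not upper (on a segment, $F(s)\ge(1-s)F(0)+sF(1)$ yields $\liminf_{s\to 0}F(s)\ge F(0)$; the function $F(0)=-1$, $F|_{(0,1]}\equiv 0$ is concave but not usc at $0$). Your continuity argument survives with the roles swapped: $F$ and $\entropy-F$ are both lsc by concavity, so $F=\entropy-(\entropy-F)$ is also usc, hence continuous.
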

\begin{proof}
We prove that \ref{it:convexcornerentropy} implies \ref{it:concaveHminusconcave}. As remarked above, $\entropy_A$ is concave. $\entropy-\entropy_A$ is equal to $\entropy_{A^*}$, hence also concave.

We prove that \ref{it:concaveHminusconcave} implies \ref{it:convexcornerentropy}. Define
\begin{equation}
L=\setbuild{l\in\mathbb{R}^{\mathcal{X}}}{\forall P\in\distributions(\mathcal{X}):\langle P,l\rangle\ge F(P)}.
\end{equation}
$L$ is closed, convex and $l'\ge l\in L$ implies $l'\in L$. Let $\delta_x$ denote the probability measure concentrated at $x$. By concavity of $F$ and $\entropy-F$ we get the estimates
\begin{equation}
\min_{x\in\mathcal{X}}F(\delta_x)\le F(P)\le\max_{x\in\mathcal{X}}F(\delta_x)+\log|\mathcal{X}|,
\end{equation}
which in particular implies $[\max_{x\in\mathcal{X}}F(\delta_x)+\log|\mathcal{X}|,\infty)\subseteq L\subseteq[\min_{x\in\mathcal{X}}F(\delta_x),\infty)^{\mathcal{X}}$.
By linear programming duality, $F(P)=\min_{l\in L}\langle P,l\rangle$, i.e. $F=\entropy_A$ with
\begin{equation}
A=\overline{\Lambda^{-1}(L)}=\overline{\setbuild{a\in\mathbb{R}_{>0}^{\mathcal{X}}}{\forall P\in\distributions(\mathcal{X}):-\sum_{x\in\mathcal{X}}P(x)\log a_x\ge F(P)}}.
\end{equation}
From the properties of $L$ we see that $A$ is compact, has nonempty interior and $0\le a'\le a\in A$ implies $a'\in A$. We need to show that $A$ is convex.

Let $a,b\in A\cap\mathbb{R}_{>0}^{\mathcal{X}}$ and $\lambda\in(0,1)$. Introduce
\begin{equation}
q_x=\frac{\lambda a_x}{\lambda a_x+(1-\lambda)b_x}
\end{equation}
so that $\log(\lambda a_x+(1-\lambda)b_x)=q_x\log\lambda a_x+(1-q_x)\log(1-\lambda)b_x+h(q_x)$. For an arbitrary $P\in\distributions(\mathcal{X})$ let
\begin{align}
q & = \sum_{x\in\mathcal{X}}P(x)q_x  \\
Q(x) & = \frac{P(x)q_x}{q}  \\
R(x) & = \frac{P(x)(1-q_x)}{1-q}.
\end{align}
With these definitions $q\in(0,1)$, $Q,R\in\distributions(\mathcal{X})$ and $P=qQ+(1-q)R$ and
\begin{equation}
h(q)+q\entropy(Q)+(1-q)\entropy(R)=\entropy(P)+\sum_{x\in\mathcal{X}}P(x)h(q_x).
\end{equation}
Using this identity we can write
\begin{equation}
\begin{split}
& \langle P,\Lambda(\lambda a+(1-\lambda)b)\rangle  \\
 & = -\sum_{x\in\mathcal{X}}P(x)\left[q_x\log\lambda a_x+(1-q_x)\log(1-\lambda) b_x+h(q_x)\right]  \\
 & = -q\log\lambda+q\langle Q,\Lambda(a)\rangle-(1-q)\log(1-\lambda)+(1-q)\langle R,\Lambda(b)\rangle-\sum_{x\in\mathcal{X}}P(x)h(q_x)  \\
 & \ge -q\log\lambda+qF(Q)-(1-q)\log(1-\lambda)+(1-q)F(R)-\sum_{x\in\mathcal{X}}P(x)h(q_x)  \\
 & \ge F(P)-\entropy(P)+q\entropy(Q)+(1-q)\entropy(R)-q\log\lambda-(1-q)\log(1-\lambda)-\sum_{x\in\mathcal{X}}P(x)h(q_x)  \\
 & = F(P)+d(q\|\lambda)\ge F(P).
\end{split}
\end{equation}
The first inequality follows from $a,b\in A$, the second one is concavity of $\entropy-F$, while the last one is true because the omitted term is a relative entropy. Thus $\lambda a+(1-\lambda)b\in A$ so $A$ is convex, because it is the closure of the convex set $A\cap\mathbb{R}_{>0}^{\mathcal{X}}$.
\end{proof}
\Cref{prop:entropycharacterization} and \ref{it:Fconcave} and \ref{it:HFconcave} in \Cref{prop:probabilisticrefinement} imply that the probabilistic refinement of any spectral point $f$ may be encoded in a function $C_f$ mapping graphs to convex corners on their vertex sets. For example, the fractional clique cover number corresponds to $C_{\complement{\chi}_f}(G)=\vertexpackingpolytope(\complement{G})$ by \cite[3.1 Lemma]{csiszar1990entropy}, whereas the Lov\'asz number corresponds to $C_\vartheta(G)=\thetabody(\complement{G})$ \cite{marton1993shannon}.

Our next goal is to translate \cref{it:probmultiplicative,it:probadditive,it:probmonotone} into conditions on $C_f$. The first ingredient is an extension of \cite[5.1 Theorem, (i)]{csiszar1990entropy}, which is about the entropy of product distributions with respect to the tensor product of convex corners.
\begin{definition}[{\cite[Section 5.]{csiszar1990entropy}}]
Let $A$ be a convex corner on $\mathcal{X}$ and $B$ a convex corner on $\mathcal{Y}$. Their tensor product $A\otimes B$ is the smallest convex corner on $\mathcal{X}\times\mathcal{Y}$ containing the tensor products $a\otimes b$ for all $a\in A$ and $b\in B$.
\end{definition}

\begin{proposition}\label{prop:cornerproduct}
Let $A$ and $B$ be convex corners on $\mathcal{X}$ and $\mathcal{Y}$ and $C$ a convex corner on $\mathcal{X}\times\mathcal{Y}$. The following are equivalent
\begin{enumerate}[(i)]
\item\label{it:cornerbetweentensorproducts} $A\otimes B\subseteq C\subseteq(A^*\otimes B^*)^*$
\item\label{it:cornermultiplicativebounds} $\entropy_C(P)\le\entropy_A(P_X)+\entropy_B(P_Y)\le\entropy_C(P)+\mutualinformation(X:Y)_P$ for every $P\in\distributions(\mathcal{X}\times\mathcal{Y})$.
\end{enumerate}
\end{proposition}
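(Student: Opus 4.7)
The plan is to translate the two set inclusions in (i) into the pairs of entropy inequalities in (ii) by exploiting the generator structure of $A\otimes B$ together with the linear-programming characterization of membership in a convex corner implicit in the proof of \Cref{prop:entropycharacterization}. Since $A\otimes B$ is by definition the smallest convex corner containing the simple tensors $a\otimes b$, the inclusion $A\otimes B\subseteq C$ is equivalent to the assertion that every such generator lies in $C$; and for a vector $c$ with strictly positive entries, $c\in C$ is equivalent to the system $-\sum_{x,y}P(x,y)\log c_{x,y}\ge\entropy_C(P)$ holding for every $P\in\distributions(\mathcal{X}\times\mathcal{Y})$. Finally, the antiblocker identity $\entropy_{D^*}(P)=\entropy(P)-\entropy_D(P)$ reduces the second inclusion $C\subseteq(A^*\otimes B^*)^*$ to its dual form $A^*\otimes B^*\subseteq C^*$, so the same machinery applies.

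For (i) $\Rightarrow$ (ii), I fix $a\in A$ and $b\in B$; then $a\otimes b\in A\otimes B\subseteq C$ gives $-\sum_{x,y}P(x,y)\log(a_xb_y)\ge\entropy_C(P)$ for every $P$. The left-hand side splits as $-\sum_{x}P_X(x)\log a_x-\sum_{y}P_Y(y)\log b_y$, and minimizing the two pieces separately over $a\in A$ and $b\in B$ yields $\entropy_C(P)\le\entropy_A(P_X)+\entropy_B(P_Y)$. The same argument applied to the inclusion $A^*\otimes B^*\subseteq C^*$ produces $\entropy_{C^*}(P)\le\entropy_{A^*}(P_X)+\entropy_{B^*}(P_Y)$, which via the antiblocker identity becomes $\entropy_A(P_X)+\entropy_B(P_Y)\le\entropy_C(P)+\mutualinformation(X:Y)_P$.

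For (ii) $\Rightarrow$ (i), I reverse the chain. For any strictly positive $a\in A$, $b\in B$ and any $P$, combining the defining inequalities $-\sum_xP_X(x)\log a_x\ge\entropy_A(P_X)$ and $-\sum_yP_Y(y)\log b_y\ge\entropy_B(P_Y)$ with the first inequality of (ii) gives $-\sum_{x,y}P(x,y)\log(a_xb_y)\ge\entropy_C(P)$, which places $a\otimes b$ in $C$ by the LP characterization of corner membership. Approximating arbitrary $a\in A$, $b\in B$ by strictly positive elements (using an interior point of each corner and convexity) extends this to every simple tensor, so $A\otimes B\subseteq C$. The second inequality of (ii), rewritten via $\entropy_{C^*}=\entropy-\entropy_C$ as $\entropy_{C^*}(P)\le\entropy_{A^*}(P_X)+\entropy_{B^*}(P_Y)$, allows the identical argument to be run on the antiblockers, yielding $A^*\otimes B^*\subseteq C^*$ and hence $C\subseteq(A^*\otimes B^*)^*$.

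The argument is essentially symmetric between the two inclusions and presents no serious obstacle; the only technical caveat is the approximation step needed to pass from strictly positive vectors to general elements of $A$ and $B$ in the LP characterization of membership, which is routine given that each corner has nonempty interior.
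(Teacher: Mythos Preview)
Your proposal is correct and follows essentially the same approach as the paper. Both directions hinge on the same two ingredients: the LP/closure characterization of corner membership $C=\overline{\{c\in\mathbb{R}_{>0}^{\mathcal{X}\times\mathcal{Y}}:\forall P,\ \langle P,\Lambda(c)\rangle\ge\entropy_C(P)\}}$, and the reduction of the second inclusion to $A^*\otimes B^*\subseteq C^*$ via the antiblocker identity $\entropy_{D^*}=\entropy-\entropy_D$. The only cosmetic difference is that in the (i)$\Rightarrow$(ii) direction the paper first records the bounds $\entropy_{A\otimes B}(P)\le\entropy_A(P_X)+\entropy_B(P_Y)$ and $\entropy_{(A^*\otimes B^*)^*}(P)\ge\entropy_A(P_X)+\entropy_B(P_Y)-\mutualinformation(X:Y)_P$ and then invokes monotonicity under inclusion, whereas you go straight from $a\otimes b\in C$ to the inequality; your explicit mention of the strictly-positive approximation in the reverse direction is a point the paper leaves implicit in the closure.
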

\begin{proof}
We prove \ref{it:cornerbetweentensorproducts}$\implies$\ref{it:cornermultiplicativebounds}. First note that for any $P\in\distributions(\mathcal{X}\times\mathcal{Y})$ we have the inequality
\begin{equation}
\begin{split}
\entropy_{A\otimes B}(P)
 & \le \min_{\substack{a\in A  \\  b\in B}}-\sum_{\substack{x\in\mathcal{X}  \\  y\in\mathcal{Y}}}P(x,y)\log a_xb_y  \\
 & = \min_{\substack{a\in A  \\  b\in B}}-\sum_{x\in\mathcal{X}}P_X(x)\log a_x-\sum_{y\in\mathcal{Y}}P_Y(y)\log b_y  \\
 & = \entropy_A(P_X)+\entropy_B(P_Y),
\end{split}
\end{equation}
which implies
\begin{equation}
\begin{split}
\entropy_{(A^*\otimes B^*)^*}(P)
 & = \entropy(P)-\entropy_{A^*\otimes B^*}(P)  \\
 & \ge \entropy(P)-\entropy_{A^*}(P_X)-\entropy_{B^*}(P_Y)  \\
 & = \entropy(P)-\entropy(P_X)+\entropy_A(P_X)-\entropy(P_Y)+\entropy_B(P_Y)  \\
 & = \entropy_A(P_X)+\entropy_B(P_Y)-\mutualinformation(X:Y)_P.
\end{split}
\end{equation}
Let $A\otimes B\subseteq C\subseteq(A^*\otimes B^*)^*$. Then for any $P\in\distributions(\mathcal{X}\times\mathcal{Y})$ we have
\begin{equation}
\begin{split}
\entropy_A(P_X)+\entropy_B(P_Y)-\mutualinformation(X:Y)_P
 & \le\entropy_{(A^*\otimes B^*)^*}(P)  \\
 & \le\entropy_C(P)  \\
 & \le\entropy_{A\otimes B}(P)  \\
 & \le\entropy_A(P_X)+\entropy_B(P_Y).
\end{split}
\end{equation}

We prove \ref{it:cornermultiplicativebounds}$\implies$\ref{it:cornerbetweentensorproducts}. We use that
\begin{equation}
C=\overline{\setbuild{c\in\mathbb{R}_{>0}^{\mathcal{X}\times\mathcal{Y}}}{\forall P\in\distributions(\mathcal{X}\times\mathcal{Y}):\langle P,\Lambda(c)\rangle\ge\entropy_C(P)}}.
\end{equation}
Suppose that for all $P\in\distributions(\mathcal{X}\times\mathcal{Y})$ the inequality $\entropy_C(P)\le\entropy_A(P_X)+\entropy_B(P_Y)$ is satisfied. Then for any $a\in A\cap\mathbb{R}_{>0}^{\mathcal{X}}$. $b\in B\cap\mathbb{R}_{>0}^{\mathcal{Y}}$ and $P\in\distributions(\mathcal{X}\times\mathcal{Y})$ we have
\begin{equation}
\begin{split}
\langle P,\Lambda(a\otimes b)\rangle
 & = -\sum_{\substack{x\in\mathcal{X}  \\  y\in\mathcal{Y}}}P(x,y)\log a_xb_y  \\
 & = -\sum_{x\in\mathcal{X}}P_X(x)\log a_x-\sum_{y\in\mathcal{Y}}P_Y(y)\log b_y  \\
 & \ge \entropy_A(P_X)+\entropy_B(P_Y) \ge \entropy_C(P),
\end{split}
\end{equation}
therefore $a\otimes b\in C$. Suppose now that $\entropy_A(P_X)+\entropy_B(P_Y)\le\entropy_C(P)+\mutualinformation(X:Y)_P$ for all $P$. Then
\begin{equation}
\begin{split}
\entropy_{C^*}(P)
 & = \entropy(P)-\entropy_C(P)  \\
 & \le \entropy(P)-\entropy_A(P_X)-\entropy_B(P_Y)+\mutualinformation(X:Y)_P  \\
 & \le \entropy(P_X)-\entropy_A(P_X)+\entropy(P_Y)-\entropy_B(P_Y)  \\
 & = \entropy_{A^*}(P_X)+\entropy_{B^*}(P_Y).
\end{split}
\end{equation}
The same argument as above shows $A^*\otimes B^*\subseteq C^*$, therefore $C\subseteq(A^*\otimes B^*)^*$.
\end{proof}

We introduce two additional operations on convex corners.
\begin{definition}
Let $A$ be a convex corner on $\mathcal{X}$ and $B$ a convex corner on $\mathcal{Y}$. Their direct sum $A\oplus B$ is the smallest convex corner on $\mathcal{X}\sqcup\mathcal{Y}$ containing $a\oplus 0$ and $0\oplus b$ for all $a\in A$ and $b\in B$.

If $f:\mathcal{X}\to\mathcal{Y}$ is a function and $B$ is a convex corner on $\mathcal{Y}$ then we define the pullback
\begin{equation}
f^*(B):=\setbuild{a\in\mathbb{R}_{\ge 0}^{\mathcal{X}}}{\exists b\in B:\forall x\in\mathcal{X}:a_x\le b_{f(x)}}.
\end{equation}
\end{definition}
Note that $A\oplus B$ is the same as the convex hull of $(A\oplus 0)\cup(0\oplus B)$.

\begin{proposition}\label{prop:cornersum}
Let $A$ and $B$ be convex corners on $\mathcal{X}$ and $\mathcal{Y}$, $P_X\in\distributions(\mathcal{X})$, $P_Y\in\distributions(\mathcal{Y})$ and $p\in[0,1]$. Then
\begin{equation}
\entropy_C(pP_X\oplus(1-p)P_Y)=p\entropy_A(P_X)+(1-p)\entropy_B(P_Y)+h(p).
\end{equation}
\end{proposition}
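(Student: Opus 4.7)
My plan is as follows. The first thing I would do is give an explicit description of the direct sum: I claim that $A\oplus B$ coincides with
\begin{equation}
\setbuild{c\in\mathbb{R}_{\ge 0}^{\mathcal{X}\sqcup\mathcal{Y}}}{\exists \lambda\in[0,1],\,a\in A,\,b\in B:c|_\mathcal{X}\le\lambda a\text{ and }c|_\mathcal{Y}\le(1-\lambda)b}.
\end{equation}
One inclusion is immediate since every such $c$ lies below the convex combination $\lambda(a\oplus 0)+(1-\lambda)(0\oplus b)$ of two generators. For the reverse inclusion I would verify that the displayed set is already a convex corner containing all $a\oplus 0$ and $0\oplus b$: downward closure is built in, compactness follows from compactness of $A,B$ and $[0,1]$, and convexity follows by regrouping any convex combination $t(\lambda_1 a_1\oplus(1-\lambda_1)b_1)+(1-t)(\lambda_2 a_2\oplus(1-\lambda_2)b_2)$ in terms of $\mu=t\lambda_1+(1-t)\lambda_2$ and the corresponding convex combinations in $A$ and $B$.

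Next I would establish the upper bound on $\entropy_{A\oplus B}(pP_X\oplus(1-p)P_Y)$. Pick minimisers $a\in A$ for $\entropy_A(P_X)$ and $b\in B$ for $\entropy_B(P_Y)$ and test with $c=pa\oplus(1-p)b\in A\oplus B$ (choosing $\lambda=p$). A direct computation splits the logarithm:
\begin{equation}
-\sum_{x}pP_X(x)\log(pa_x)-\sum_{y}(1-p)P_Y(y)\log((1-p)b_y)=h(p)+p\entropy_A(P_X)+(1-p)\entropy_B(P_Y),
\end{equation}
giving the desired upper bound.

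For the matching lower bound I would take an arbitrary $c\in A\oplus B$ and apply the characterisation above to obtain $\lambda\in[0,1]$, $a\in A$, $b\in B$ with $c|_\mathcal{X}\le\lambda a$ and $c|_\mathcal{Y}\le(1-\lambda)b$. Monotonicity of $-\log$ yields
\begin{equation}
-\sum_x pP_X(x)\log c_x\ge -p\log\lambda+p\Bigl(-\sum_x P_X(x)\log a_x\Bigr)\ge -p\log\lambda+p\entropy_A(P_X),
\end{equation}
and analogously for the $\mathcal{Y}$-sum. Adding these and rewriting $-p\log\lambda-(1-p)\log(1-\lambda)=h(p)+d(p\|\lambda)$ gives
\begin{equation}
\langle pP_X\oplus(1-p)P_Y,\Lambda(c)\rangle\ge h(p)+d(p\|\lambda)+p\entropy_A(P_X)+(1-p)\entropy_B(P_Y),
\end{equation}
and since $d(p\|\lambda)\ge 0$ (with equality for $\lambda=p$, which is why the upper bound was achievable) minimising over $c$ yields the lower bound.

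The only real subtlety is pinning down the explicit form of $A\oplus B$ — in particular the fact that the naive down-closure of convex combinations is already convex — and tracking the split-off $h(p)$ via the Bernoulli relative entropy identity $-p\log\lambda-(1-p)\log(1-\lambda)=h(p)+d(p\|\lambda)$. Once these two ingredients are in hand, the two one-line estimates above close the argument and the optimal $\lambda=p$ witnesses that the bound is tight.
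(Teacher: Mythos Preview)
Your proposal is correct and follows essentially the same approach as the paper: both reduce the computation to evaluating $\langle pP_X\oplus(1-p)P_Y,\Lambda(\lambda a\oplus(1-\lambda)b)\rangle$ and then invoke the identity $-p\log\lambda-(1-p)\log(1-\lambda)=h(p)+d(p\|\lambda)$ together with $d(p\|\lambda)\ge 0$. The only difference is that you spell out and verify the parametrisation of $A\oplus B$ by $(\lambda,a,b)$, whereas the paper simply relies on the remark (stated immediately after the definition) that $A\oplus B$ is the convex hull of $(A\oplus 0)\cup(0\oplus B)$, which yields the same parametrisation directly.
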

\begin{proof}
Let $a\in A\cap\mathbb{R}_{>0}^{\mathcal{X}}$, $b\in B\cap\mathbb{R}_{>0}^{\mathcal{Y}}$ and $\lambda\in(0,1)$. Then
\begin{equation}
\begin{split}
& \langle pP_X\oplus(1-p)P_Y,\Lambda(\lambda a\oplus(1-\lambda)b)\rangle  \\
 & = -\sum_{x\in\mathcal{X}}pP_X(x)\log(\lambda a_x)-\sum_{y\in\mathcal{Y}}(1-p)P_Y(y)\log((1-\lambda) b_y)  \\
 & = -p\log\lambda+p\langle P_X,\Lambda(a)\rangle-(1-p)\log(1-\lambda)+(1-p)\langle P_Y,\Lambda(b)\rangle  \\
 & = p\langle P_X,\Lambda(a)\rangle+(1-p)\langle P_Y,\Lambda(b)\rangle+h(p)+d(p\|\lambda).
\end{split}
\end{equation}
The infimum of the last term is $0$ (attained as $\lambda\to p$). The minimum values of the first two terms in $a$ and $b$ are $p\entropy_A(P_X)$ and $(1-p)\entropy_B(P_Y)$.
\end{proof}

\begin{proposition}\label{prop:cornerpullback}
Let $\mathcal{X},\mathcal{Y}$ be finite sets, $B$ a convex corner on $\mathcal{Y}$, $f:\mathcal{X}\to\mathcal{Y}$ a function. Then for any $P\in\distributions(\mathcal{X})$ we have
\begin{equation}
\entropy_{f^*(B)}(P)=\entropy_B(f_*(P)).
\end{equation}
\end{proposition}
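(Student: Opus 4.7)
The plan is to establish both inequalities by unfolding the definitions of $\entropy_{f^*(B)}$, $f^*(B)$ and $f_*(P)$, matching the two minima through the obvious substitution $a_x = b_{f(x)}$.

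First, I rewrite the right-hand side. Using the definition of the pushforward,
\begin{equation*}
\entropy_B(f_*(P)) = \min_{b\in B}-\sum_{y\in\mathcal{Y}}f_*(P)(y)\log b_y = \min_{b\in B}-\sum_{y\in\mathcal{Y}}\sum_{x\in f^{-1}(y)}P(x)\log b_y = \min_{b\in B}-\sum_{x\in\mathcal{X}}P(x)\log b_{f(x)},
\end{equation*}
where reindexing is valid because $\{f^{-1}(y)\}_{y\in\mathcal{Y}}$ partitions $\mathcal{X}$. (Nonempty interior of $B$ ensures the minimum is finite over $B\cap\mathbb{R}_{>0}^{\mathcal{Y}}$.)

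For the inequality $\entropy_{f^*(B)}(P)\le\entropy_B(f_*(P))$, let $b\in B$ and define $a\in\mathbb{R}_{\ge 0}^{\mathcal{X}}$ by $a_x=b_{f(x)}$. Then $a\in f^*(B)$ with witness $b$ itself, and $-\sum_{x\in\mathcal{X}}P(x)\log a_x = -\sum_{x\in\mathcal{X}}P(x)\log b_{f(x)}$. Taking the infimum over $b\in B$ and using the expression above gives the desired bound.

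For the reverse inequality, let $a\in f^*(B)$ and pick a witness $b\in B$ with $a_x\le b_{f(x)}$ for all $x\in\mathcal{X}$. Since $\log$ is monotone, $-\log a_x\ge -\log b_{f(x)}$, and averaging against $P$ yields
\begin{equation*}
-\sum_{x\in\mathcal{X}}P(x)\log a_x\ge -\sum_{x\in\mathcal{X}}P(x)\log b_{f(x)} \ge \entropy_B(f_*(P)).
\end{equation*}
Taking the minimum over $a\in f^*(B)$ completes the proof. There is no real obstacle here: the statement reduces to noticing that the optimal $a$ for the pullback is coordinate-wise as large as allowed, namely $a_x=b_{f(x)}$ for an optimal $b\in B$.
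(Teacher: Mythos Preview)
Your proof is correct and follows essentially the same approach as the paper: both unfold the definition of $f_*(P)$ to rewrite $\entropy_B(f_*(P))$ as $\min_{b\in B}-\sum_{x}P(x)\log b_{f(x)}$ and then identify this minimum with $\min_{a\in f^*(B)}-\sum_x P(x)\log a_x$. The paper compresses the last step into a single equality, while you spell out both inequalities (via the substitution $a_x=b_{f(x)}$ and via the monotonicity of $\log$ applied to a witness $b$), which is a perfectly sound and slightly more explicit version of the same argument.
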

\begin{proof}
\begin{equation}
\begin{split}
\entropy_B(f_*(P))
 & = -\min_{b\in B}\sum_{y\in\mathcal{Y}}f_*(P)(y)\log b_y  \\
 & = -\min_{b\in B}\sum_{y\in\mathcal{Y}}\sum_{x\in f^{-1}(y)}P(x)\log b_y  \\
 & = -\min_{b\in B}\sum_{x\in\mathcal{X}}P(x)\log b_{f(x)}  \\
 & = -\min_{a\in f^*(B)}\sum_{x\in\mathcal{X}}P(x)\log a_x  \\
 & = \entropy_{f^*(B)}(P)
\end{split}
\end{equation}
\end{proof}

\begin{theorem}
A function $F:\probgraphs\to\mathbb{R}$ is the probabilistic refinement of a logarithmic spectral point $f$ if and only if $F(G,P)=\entropy_{C_f(G)}(P)$ for some function $C_f$ mapping each graph $G$ to a convex corner on $V(G)$ which satisfies the following properties for all $G,H\in\graphs$:
\begin{enumerate}[({C}1)]
\item\label{it:cornernormalised} $C_f(K_1)=[0,1]$
\item\label{it:cornermultiplicative} $C_f(G)\otimes C_f(H)\subseteq C_f(G\strongproduct H)\subseteq(C_f(G)^*\otimes C_f(H)^*)^*$
\item\label{it:corneradditive} $C_f(G\sqcup H)=C_f(G)\oplus C_f(H)$
\item\label{it:cornermonotone} if $\varphi:\complement{H}\to\complement{G}$ is a homomorphism then $\varphi^*(C_f(G))\subseteq C_f(H)$.
\end{enumerate}
\end{theorem}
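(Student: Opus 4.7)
The plan is to set up a dictionary between the function $F$ on probabilistic graphs and the graph-to-corner assignment $C_f$: Proposition~\ref{prop:entropycharacterization} provides, for each fixed graph $G$, a bijection between functions on $\distributions(V(G))$ whose entropy-complement is also concave and convex corners on $V(G)$, while Propositions~\ref{prop:cornerproduct}, \ref{prop:cornersum} and \ref{prop:cornerpullback} translate the tensor-product, direct-sum and pullback operations on convex corners into precisely the inequalities and identities that match \cref{it:probmultiplicative,it:probadditive,it:probmonotone} with \cref{it:cornermultiplicative,it:corneradditive,it:cornermonotone}. I would therefore run this correspondence in both directions, closing the loop with Theorems~\ref{thm:spectralpointtoprobabilistic} and \ref{thm:probabilistictospectralpoint} at the endpoints.

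For the forward direction, start from an $f\in\Delta(\graphs)$ with logarithmic probabilistic refinement $F$. By \cref{it:Fconcave,it:HFconcave} of Proposition~\ref{prop:probabilisticrefinement}, for every graph $G$ both $P\mapsto F(G,P)$ and $P\mapsto\entropy(P)-F(G,P)$ are concave, so Proposition~\ref{prop:entropycharacterization} produces a unique convex corner $C_f(G)$ on $V(G)$ satisfying $F(G,P)=\entropy_{C_f(G)}(P)$. Property \cref{it:cornernormalised} then follows from $F(K_1,\delta)=0=\entropy_{[0,1]}(\delta)$ together with uniqueness. Properties \cref{it:cornermultiplicative} and \cref{it:cornermonotone} are obtained by rewriting \cref{it:probmultiplicative} and \cref{it:probmonotone} as entropy (in)equalities and invoking Propositions~\ref{prop:cornerproduct} and \ref{prop:cornerpullback}; for \cref{it:cornermonotone} one additionally applies Lemma~\ref{lem:subcorner} to convert the entropy inequality into the asserted inclusion of corners. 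For \cref{it:corneradditive}, every $P\in\distributions(V(G)\sqcup V(H))$ decomposes uniquely as $pP_G\oplus(1-p)P_H$ with $p=P(V(G))$; \cref{it:probadditive} combined with Proposition~\ref{prop:cornersum} shows that $\entropy_{C_f(G\sqcup H)}$ and $\entropy_{C_f(G)\oplus C_f(H)}$ coincide pointwise on $\distributions(V(G)\sqcup V(H))$, so Lemma~\ref{lem:subcorner} forces the two corners to agree.

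For the reverse direction, given $C_f$ satisfying \cref{it:cornernormalised,it:cornermultiplicative,it:corneradditive,it:cornermonotone}, I would define $F(G,P):=\entropy_{C_f(G)}(P)$. Concavity of entropy on a convex corner gives \cref{it:probconcave} immediately, and \cref{it:probmultiplicative,it:probadditive,it:probmonotone} fall out of \cref{it:cornermultiplicative,it:corneradditive,it:cornermonotone} by running the same three propositions backwards (using Proposition~\ref{prop:cornerpullback} together with Lemma~\ref{lem:subcorner} for \cref{it:probmonotone}). Theorem~\ref{thm:probabilistictospectralpoint} then supplies a spectral point $f\in\Delta(\graphs)$ whose logarithmic probabilistic refinement is $F$ by construction.

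The only mildly subtle point I expect is that \cref{it:corneradditive} demands a literal equality of convex corners rather than just an identity of their entropy functions, so Lemma~\ref{lem:subcorner} must be invoked explicitly in both directions to bridge the two. Beyond this, the argument is a direct translation that reuses the cited propositions verbatim, with no new analytic content.
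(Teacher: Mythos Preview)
Your proposal is correct and follows essentially the same approach as the paper: set up the graph-by-graph bijection via Proposition~\ref{prop:entropycharacterization} (using \cref{it:Fconcave,it:HFconcave} of Proposition~\ref{prop:probabilisticrefinement} in the forward direction), then translate each of \cref{it:probmultiplicative,it:probadditive,it:probmonotone} into \cref{it:cornermultiplicative,it:corneradditive,it:cornermonotone} and back using Propositions~\ref{prop:cornerproduct}, \ref{prop:cornersum}, \ref{prop:cornerpullback} together with Lemma~\ref{lem:subcorner}. If anything, you are slightly more explicit than the paper in two places: you spell out \cref{it:cornernormalised} (which the paper's proof leaves implicit) and you state clearly that Theorem~\ref{thm:probabilistictospectralpoint} is what closes the loop in the reverse direction, turning a function satisfying \cref{it:probconcave,it:probmultiplicative,it:probadditive,it:probmonotone} into an actual logarithmic probabilistic refinement of a spectral point.
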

\begin{proof}
As noted above, $\entropy_{C_f(G)}$ is always concave, which is \cref{it:probconcave}. Conversely, if $F$ is a probabilistic refinement then by \Cref{prop:probabilisticrefinement} and \Cref{prop:entropycharacterization} it is the entropy function of some convex corner. The equivalence of \cref{it:cornermultiplicative,it:probmultiplicative} is \Cref{prop:cornerproduct}. The equivalence of \cref{it:corneradditive,it:probadditive} follows from \Cref{prop:cornersum}. The equivalence of \cref{it:cornermonotone,it:probmonotone} is a consequence of \Cref{prop:cornerpullback} and \Cref{lem:subcorner}.
\end{proof}

We finish with a characterization of perfect graphs.
\begin{proposition}
Let $G$ be a graph. The following are equivalent
\begin{enumerate}[(i)]
\item\label{it:perfect} $G$ is perfect
\item\label{it:evGPconstant} The evaluation function $\ev_{G,P}:\Delta(\graphs)\to\mathbb{R}$, defined as $\ev_{G,P}(f)=f(G,P)$ is constant for all $P\in\distributions(V(G))$
\item\label{it:convexcornerconstant} $C_f(G)$ is the same convex corner for all $f\in\Delta(\graphs)$.
\end{enumerate}
\end{proposition}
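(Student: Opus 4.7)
The plan is to establish \cref{it:evGPconstant}$\iff$\cref{it:convexcornerconstant} by direct translation of the refinement--corner bijection, and then to prove each implication \cref{it:convexcornerconstant}$\Rightarrow$\cref{it:perfect} and \cref{it:perfect}$\Rightarrow$\cref{it:evGPconstant} by comparing a carefully chosen pair of spectral points. The first equivalence is immediate from the framework: $\ev_{G,P}(f)=f(G,P)=2^{F(G,P)}$ is constant in $f$ for every $P$ exactly when the function $P\mapsto F(G,P)$ is independent of $f$; by \Cref{prop:entropycharacterization} this function equals $\entropy_{C_f(G)}$, and by \Cref{lem:subcorner} a convex corner is uniquely determined by its entropy function. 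Hence independence of $F(G,\cdot)$ from $f$ translates term by term into independence of $C_f(G)$ from $f$.

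For \cref{it:convexcornerconstant}$\Rightarrow$\cref{it:perfect}, I would specialize to the two spectral points with already-named corners recorded just after the main theorem of this section: $C_{\complement{\chi}_f}(G)=\vertexpackingpolytope(\complement{G})$ and $C_{\vartheta}(G)=\thetabody(\complement{G})$. Assumption \cref{it:convexcornerconstant} forces $\vertexpackingpolytope(\complement{G})=\thetabody(\complement{G})$, and by the theorem of Lov\'asz \cite{lovasz1979shannon,grotschel1986relaxations}, which identifies $\thetabody$ with $\vertexpackingpolytope$ exactly for perfect graphs, this means that $\complement{G}$, and hence $G$, is perfect.

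For \cref{it:perfect}$\Rightarrow$\cref{it:evGPconstant}, I would invoke the identity $\log\Theta(G,P)=\entropy(\complement{G},P)$ valid for every perfect $G$ and every $P$, which is the probabilistic analogue of $\Theta(G)=\complement{\chi}_f(G)$ for perfect graphs and is exactly the content of the remark following \Cref{prop:Gjoin}. Combining it with \Cref{prop:capacitywithintype}, which expresses $\log\Theta(G,P)$ as $\min_{f\in\Delta(\graphs)}\log f(G,P)$, and with the universal upper bound $\log f(G,P)\le\entropy(\complement{G},P)$ coming from the maximality of $\complement{\chi}_f$ among spectral points, every spectral point is squeezed to the common value $\log\Theta(G,P)$, yielding \cref{it:evGPconstant}.

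The only nontrivial ingredients are imported from perfect-graph theory: Lov\'asz's characterization of perfection by the corner identity $\thetabody=\vertexpackingpolytope$, and the Csisz\'ar--K\"orner constant-composition capacity formula $C(G,P)=\entropy(\complement{G},P)$ for perfect graphs. I expect the latter to be the main conceptual hurdle to reconstruct in detail, but both are classical, and the rest of the argument is routine translation inside the correspondence between spectral points, their probabilistic refinements and convex corners developed in the preceding sections.
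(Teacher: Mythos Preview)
Your argument is correct and the implications \ref{it:evGPconstant}$\Leftrightarrow$\ref{it:convexcornerconstant} and \ref{it:convexcornerconstant}$\Rightarrow$\ref{it:perfect} match the paper's proof almost verbatim (the paper phrases the last step as $\thetabody(\complement{G})=C_\vartheta(G)=C_{\complement{\chi}_f}(G)$ and then invokes \cite[(3.12) Corollary]{grotschel1986relaxations}, which is the same Gr\"otschel--Lov\'asz--Schrijver characterization you cite).

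The only genuine difference is in \ref{it:perfect}$\Rightarrow$\ref{it:evGPconstant}. The paper argues directly: since strong powers of perfect graphs are perfect, every induced subgraph $\typegraph{G}{km}{P}$ is perfect, so $\alpha=f=\complement{\chi}$ on it for every spectral point, and the limit defining $F(G,P)$ is independent of $f$. Your route instead invokes the identity $\log\Theta(G,P)=\entropy(\complement{G},P)$ (citing the remark after \Cref{prop:Gjoin}) and then squeezes via \Cref{prop:capacitywithintype} and the maximality of $\complement{\chi}_f$. This works, but note that the remark you cite is stated without proof, and its justification is precisely the fact that strong powers of perfect graphs are perfect---i.e.\ the paper's direct argument. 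So your version is a repackaging rather than an independent proof: what you gain is a clean min-equals-max squeeze, what you lose is that you defer the actual perfect-graph input to an unproved remark rather than making it explicit. The paper's version is slightly more self-contained.
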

\begin{proof}
\ref{it:perfect}$\implies$\ref{it:evGPconstant}: $G^{\strongproduct n}$ is perfect for every $n$, therefore $\alpha(\typegraph{G}{km}{P})=f(\typegraph{G}{km}{P})=\complement{\chi}(\typegraph{G}{km}{P})$ for every $f\in\Delta(\graphs)$ and $P\in\distributions[m](V(G))$. Thus for any $P\in\distributions[\mathbb{Q}](V(G))$ the limit
\begin{equation}
\lim_{k\to\infty}\frac{1}{km}\log f(\typegraph{G}{km}{P})
\end{equation}
is independent of $f\in\Delta(\graphs)$.

\ref{it:evGPconstant}$\implies$\ref{it:convexcornerconstant}: This follows from the fact that a convex corner is uniquely determined by its entropy function.

\ref{it:convexcornerconstant}$\implies$\ref{it:perfect}: $\thetabody(\complement{G})=C_\vartheta(G)=C_{\complement{\chi}_f}(G)=\fractionalvertexpackingpolytope(G)$, therefore $G$ is perfect \cite[(3.12) Corollary]{grotschel1986relaxations}.
\end{proof}

\section*{Acknowledgements}
I thank Jeroen Zuiddam and Mil\'an Mosonyi for useful discussions.
This research was supported by the National Research, Development and Innovation Fund of Hungary within the Quantum Technology National Excellence Program (Project Nr.~2017-1.2.1-NKP-2017-00001) and via the research grants K124152, KH~129601. 

\appendix

\section{Proofs}\label{sec:proof}

\begin{proof}[Proof of \cref{eq:probrefinementsequence}]
$f(G,P_n)\ge \sqrt[n]{f(\typegraph{G}{n}{P_n})}$ by \Cref{prop:probabilisticrefinement} and $f(G,P)$ is continuous in $P$, therefore
\begin{equation}
f(G,P)=\limsup_{n\to\infty}f(G,P_n)\ge\limsup_{n\to\infty}\sqrt[n]{f(\typegraph{G}{n}{P_n})}.
\end{equation}

For the other direction, let $\epsilon>0$ be small, choose some $m\in\mathbb{N}$ and distribution $Q\in\distributions[m](V(G))$ such that $\norm[1]{P-Q}<\epsilon/2$ and $\support Q=\support P$. When $n$ is large enough, $\norm[1]{P_n-P}<\epsilon/2$ and therefore
\begin{equation}
\begin{split}
k_n
 & :=\left\lfloor\min_{v\in \support Q}\frac{nP_n(v)}{mQ(v)}\right\rfloor\ge\min_{v\in V(G)}\frac{nP_n(v)}{mQ(v)}-1  \\
 & \ge\min_{v\in \support Q}\frac{n(Q(v)-\epsilon)}{mQ(v)}-1=\frac{n}{m}\left(1-\frac{\epsilon}{\min_{v\in \support Q}Q(v)}\right)-1
\end{split}
\end{equation}
goes to $\infty$ as $n\to\infty$. This choice ensures $nP_n-k_nmQ\ge 0$, therefore we can apply \Cref{prop:convexcombinationbounds} to get
\begin{equation}
\typegraph{G}{k_nm}{Q}\le\typegraph{G}{k_nm}{Q}\strongproduct\typegraph{G}{(n-k_nm)}{\frac{nP_n-k_nmQ}{n-k_nm}}\le\typegraph{G}{n}{P_n},
\end{equation}
which in turn implies
\begin{equation}
\left(\sqrt[k_nm]{f(\typegraph{G}{k_nm}{Q})}\right)^{\frac{k_nm}{n}}\le\sqrt[n]{f(\typegraph{G}{n}{P_n})}.
\end{equation}
As $n\to\infty$, the inequality becomes
\begin{equation}
\liminf_{n\to\infty}\sqrt[n]{f(\typegraph{G}{n}{P_n})}\ge f(G,Q)^{\lim_{n\to\infty}\frac{k_nm}{n}}=f(G,Q)^{1-\frac{\epsilon}{\min_{v\in \support Q}Q(v)}}.
\end{equation}
Finally, let $Q\to P$ and then $\epsilon\to 0$.
\end{proof}

\begin{proposition}\label{prop:supFopen}
Let $G$ be a graph, $\mathcal{U}\subseteq\distributions(V(G))$ be open and consider the subsets $\typeclass{n}{\mathcal{U}}=\bigcup_{P\in \mathcal{U}\cap\distributions[n](V(G))}\typeclass{n}{P}$. Then
\begin{equation}
\sup_{P\in \mathcal{U}}f(G,P)=\lim_{n\to\infty}\sqrt[n]{f(\typegraph{G}{n}{\mathcal{U}})}.
\end{equation}
\end{proposition}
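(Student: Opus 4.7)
The plan is to squeeze $\sqrt[n]{f(\typegraph{G}{n}{\mathcal{U}})}$ between two quantities that both converge to $\sup_{P\in\mathcal{U}}f(G,P)$, thereby proving existence of the limit and its value simultaneously.

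For the upper bound $\limsup_n\sqrt[n]{f(\typegraph{G}{n}{\mathcal{U}})}\le\sup_{P\in\mathcal{U}}f(G,P)$, I would exploit the decomposition
\begin{equation*}
\typeclass{n}{\mathcal{U}}=\bigsqcup_{P\in\mathcal{U}\cap\distributions[n](V(G))}\typeclass{n}{P},
\end{equation*}
which realises $\typegraph{G}{n}{\mathcal{U}}$ as the disjoint union of the type subgraphs $\typegraph{G}{n}{P}$. Applying \cref{it:spectraladditive} turns $f(\typegraph{G}{n}{\mathcal{U}})$ into a sum of at most $(n+1)^{|V(G)|}$ terms. \Cref{prop:probabilisticrefinement} supplies the crucial per-term bound $f(\typegraph{G}{n}{P})\le f(G,P)^n$, because the limit defining $F(G,P)$ there is in fact a supremum (Fekete's lemma). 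Taking $n$-th roots and $n\to\infty$ then kills the polynomial type-count prefactor and yields the desired inequality.

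For the matching lower bound, I would fix an arbitrary rational $P\in\mathcal{U}\cap\distributions[m](V(G))$, observe that $\typeclass{km}{P}\subseteq\typeclass{km}{\mathcal{U}}$ makes $\typegraph{G}{km}{P}$ an induced subgraph of $\typegraph{G}{km}{\mathcal{U}}$, and invoke monotonicity of $f$ together with the convergence $\lim_k\sqrt[km]{f(\typegraph{G}{km}{P})}=f(G,P)$ from \Cref{prop:probabilisticrefinement} to get $f(G,P)\le\liminf_n\sqrt[n]{f(\typegraph{G}{n}{\mathcal{U}})}$. To upgrade the left hand side to $\sup_{P\in\mathcal{U}}f(G,P)$ I would use that $\mathcal{U}$ is open, so $\mathcal{U}\cap\distributions[\mathbb{Q}](V(G))$ is dense in $\mathcal{U}$, and combine this with the continuity of $P\mapsto f(G,P)$ (also supplied by \Cref{prop:probabilisticrefinement}) to take a supremum.

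No step presents a genuine obstacle: both non-trivial ingredients (the inequality $f(\typegraph{G}{n}{P})\le f(G,P)^n$ and the continuity of $f(G,\cdot)$) are already packaged in \Cref{prop:probabilisticrefinement}. The proof is essentially a bookkeeping argument that lifts single-type results to a statement about the union $\typeclass{n}{\mathcal{U}}$ using additivity under disjoint union, monotonicity, and the polynomial bound on the number of $n$-types.
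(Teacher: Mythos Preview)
Your upper-bound argument is essentially the paper's, with one slip: $\typegraph{G}{n}{\mathcal{U}}$ is \emph{not} the disjoint union of the graphs $\typegraph{G}{n}{P}$, because $G^{\strongproduct n}$ typically has edges between different type classes. What holds is only the cohomomorphism relation $\typegraph{G}{n}{\mathcal{U}}\le\bigsqcup_{P}\typegraph{G}{n}{P}$ (same vertex set, fewer edges on the right), so you must invoke monotonicity \cref{it:spectralmonotone} together with additivity \cref{it:spectraladditive} to get $f(\typegraph{G}{n}{\mathcal{U}})\le\sum_P f(\typegraph{G}{n}{P})$. After that correction the rest of your upper bound matches the paper.

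The lower bound, however, has a genuine gap. Fixing a rational $P\in\mathcal{U}\cap\distributions[m](V(G))$ and using $\typeclass{km}{P}\subseteq\typeclass{km}{\mathcal{U}}$ yields only
\[
f(G,P)=\lim_{k\to\infty}\sqrt[km]{f(\typegraph{G}{km}{P})}\le\liminf_{k\to\infty}\sqrt[km]{f(\typegraph{G}{km}{\mathcal{U}})},
\]
a liminf along the subsequence $n\in m\mathbb{Z}$, not the full $\liminf_{n}$. Taking a supremum over rational $P$ and appealing to density and continuity does not repair this: different $P$ produce different arithmetic progressions of indices, and none of them says anything about $\sqrt[n]{f(\typegraph{G}{n}{\mathcal{U}})}$ for $n$ off that progression. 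The paper closes this gap by invoking \cref{eq:probrefinementsequence}: it takes a maximizer $Q$ in the closure of $\mathcal{U}$, chooses for \emph{every} $n$ an $n$-type $P_n\in\mathcal{U}$ with $P_n\to Q$ (here openness of $\mathcal{U}$ is used), and applies the convergence $\sqrt[n]{f(\typegraph{G}{n}{P_n})}\to f(G,Q)$ established there. You could patch your argument in the same spirit by padding strings in $\typeclass{\lfloor n/m\rfloor m}{P}$ with a bounded number of extra symbols to land in some $n$-type still inside $\mathcal{U}$, but that step is essentially the content of \cref{eq:probrefinementsequence} and is not free.
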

\begin{proof}
Let $Q$ be in the closure of $\mathcal{U}$ such that $f(G,Q)=\sup_{P\in \mathcal{U}}f(G,P)$ and choose a sequence $P_n\in \mathcal{U}\cap\distributions[n](V(G))$ such that $P_n\to Q$. By \cref{eq:probrefinementsequence} we get
\begin{equation}
\sup_{P\in \mathcal{U}}f(G,P)=\lim_{n\to\infty}\sqrt[n]{f(\typegraph{G}{n}{P_n})}\le\liminf_{n\to\infty}\sqrt[n]{f(\typegraph{G}{n}{\mathcal{U}})}.
\end{equation}

For the other inequality, applying $f$ to
\begin{equation}
\typegraph{G}{n}{\mathcal{U}}\le\bigsqcup_{P\in \mathcal{U}\cap\distributions[n](V(G))}\typegraph{G}{n}{P}
\end{equation}
gives the upper bound
\begin{equation}
\begin{split}
\limsup_{n\to\infty}\sqrt[n]{f(\typegraph{G}{n}{\mathcal{U}})}
 & \le \limsup_{n\to\infty}\sqrt[n]{\sum_{P\in \mathcal{U}\cap\distributions[n](V(G))}f(\typegraph{G}{n}{P})}  \\
 & \le \limsup_{n\to\infty}\sqrt[n]{|\distributions[n](V(G))|\max_{P\in \mathcal{U}\cap\distributions[n](V(G))}f(\typegraph{G}{n}{P})}  \\
 & = \lim_{n\to\infty}\sqrt[n]{|\distributions[n](V(G))|}\limsup_{n\to\infty}\sqrt[n]{\max_{P\in \mathcal{U}\cap\distributions[n](V(G))}f(\typegraph{G}{n}{P})}  \\
 & \le \limsup_{n\to\infty}\max_{P\in \mathcal{U}\cap\distributions[n](V(G))}f(G,P) \le \sup_{P\in \mathcal{U}}f(G,P).
\end{split}
\end{equation}
\end{proof}

\begin{proof}[Proof of \cref{eq:probrefinementtypical}]
By \Cref{prop:supFopen} and continuity of $f(G,\cdot)$,
\begin{equation}
\lim_{\epsilon\to 0}\lim_{n\to\infty}\frac{1}{n}\log f(\typegraph{G}{n}{\ball{\epsilon}{P}})
 = \log\lim_{\epsilon\to 0}\sup_{Q\in\ball{\epsilon}{P}}f(G,Q)
 = F(G,P).
\end{equation}
\end{proof}

\begin{proof}[Proof of \cref{eq:probrefinementessential}]
For any $\epsilon>0$ we have $P^{\otimes n}(\typeclass{n}{\ball{\epsilon}{P}})\to 1$ by the weak law of large numbers, thus
\begin{equation}
\limsup_{n\to\infty}\min_{\substack{S\subseteq V(G)^n  \\  P^{\otimes n}(S)>c}}\frac{1}{n}\log f(G^{\strongproduct n}[S])
 \le \lim_{n\to\infty}\frac{1}{n}\log f(\typegraph{G}{n}{\ball{\epsilon}{P}}).
\end{equation}
The limit of the right hand side as $\epsilon\to 0$ is still an upper bound and equals $F(G,P)$ by \cref{eq:probrefinementtypical}.

Let $S_n\subseteq V(G)^n$ with $P^{\otimes n}(S_n)>c>0$ minimize $f(G^{\strongproduct n}[S_n])$ and let $(\epsilon_n)_n$ be such that $\epsilon\to 0$ and $P^{\otimes n}(\typeclass{n}{\ball{\epsilon_n}{P}})\to 1$. Then
\begin{equation}
\liminf_{n\to\infty}P^{\otimes n}(S\cap\typeclass{n}{\ball{\epsilon_n}{P}})\ge \liminf_{n\to\infty}P^{\otimes n}(S_n)+P^{\otimes n}(\typeclass{n}{\ball{\epsilon_n}{P}})-1\ge c,
\end{equation}
therefore for any $0<c'<c$ and large enough $n$ we have $P^{\otimes n}(S_n\cap\typeclass{n}{\ball{\epsilon_n}{P}})>c'$. Let $P_n\in\distributions[n](V(G))\cap\ball{\epsilon_n}{P}$ be one such that $P^{\otimes n}(S_n\cap\typeclass{n}{P_n})$ is maximal. Then
\begin{equation}
\frac{|S_n\cap\typeclass{n}{P_n}|}{|\typeclass{n}{P_n}|}=\frac{P^{\otimes n}(|S_n\cap\typeclass{n}{P_n}|)}{P^{\otimes n}(|\typeclass{n}{P_n}|)}\ge\frac{c'}{(n+1)^{|V(G)|}}
\end{equation}
We can use \Cref{lem:transitiveinduced} for the vertex-transitive graph $\typegraph{G}{n}{P_n}$ and the subset $S_n\cap\typeclass{n}{P_n}$, which says that
 $\typegraph{G}{n}{P_n}\le\complement{K_N}\strongproduct G^{\strongproduct n}[S_n\cap\typeclass{n}{P_n}]$ with
\begin{equation}
N
 =\left\lfloor\frac{|\typeclass{n}{P_n}|}{|S_n\cap\typeclass{n}{P_n}|}\ln|\typeclass{n}{P_n}|\right\rfloor+1
 \le\frac{(n+1)^{|V(G)|}}{c'\log e}n\entropy(P_n)+1.
\end{equation}
Using that $f$ is monotone, multiplicative and normalized, this implies
\begin{equation}
\begin{split}
\liminf_{n\to\infty}\min_{\substack{S\subseteq V(G)^n  \\  P^{\otimes n}(S)>c}}\frac{1}{n}\log f(G^{\strongproduct n}[S])
 & = \liminf_{n\to\infty}\frac{1}{n}\log f(G^{\strongproduct n}[S_n])  \\
 & \ge \liminf_{n\to\infty}\frac{1}{n}\log f(G^{\strongproduct n}[S_n\cap\typeclass{n}{P_n}])  \\
 & \ge \liminf_{n\to\infty}\frac{1}{n}\log \frac{f(\typegraph{G}{n}{P_n})}{\frac{(n+1)^{|V(G)|}}{c'\log e}n\entropy(P_n)+1}=F(G,P).
\end{split}
\end{equation}
\end{proof}

\begin{proof}[Proof of \Cref{prop:ffromF}]
Choose $\mathcal{U}=\distributions(V(G))$ in \Cref{prop:supFopen}. Then $\typeclass{n}{\mathcal{U}}=V(G)^n$ and by multiplicativity of $f$ we have
\begin{equation}
\max_{P\in\distributions(V(G))}f(G,P)=\lim_{n\to\infty}\sqrt[n]{f(G^{\strongproduct n})}=f(G).
\end{equation}
\end{proof}

\begin{proof}[Proof of \Cref{prop:Fstarproperties}]
\begin{equation}
\begin{split}
F^*(G\sqcup H,pP_G\oplus(1-p)P_H)
 & = \entropy(pP_G\oplus(1-p)P_H)-F(\complement{G\sqcup H},pP_G\oplus(1-p)P_H)  \\
 & = p\entropy(P_G)+(1-p)\entropy(P_H)+h(p)-F(\complement{G}+\complement{H},pP_G\oplus(1-p)P_H)  \\
 & = p\entropy(P_G)+(1-p)\entropy(P_H)+h(p)-pF(\complement{G},P_G)-(1-p)F(\complement{H},P_H)  \\
 & = pF^*(G,P_G)+(1-p)F^*(H,P_H)+h(p).
\end{split}
\end{equation}
\begin{equation}
\begin{split}
F^*(G+H,pP_G\oplus(1-p)P_H)
 & = \entropy(pP_G\oplus(1-p)P_H)-F(\complement{G+H},pP_G\oplus(1-p)P_H)  \\
 & = p\entropy(P_G)+(1-p)\entropy(P_H)+h(p)-F(\complement{G}\sqcup\complement{H},pP_G\oplus(1-p)P_H)  \\
 & = p\entropy(P_G)+(1-p)\entropy(P_H)+h(p)-pF(\complement{G},P_G)-(1-p)F(\complement{H},P_H)-h(p)  \\
 & = pF^*(G,P_G)+(1-p)F^*(H,P_H).
\end{split}
\end{equation}

\begin{equation}
\begin{split}
F^*(G\costrongproduct H,P)
 & = \entropy(P)-F(\complement{G\costrongproduct H},P)  \\
 & = \entropy(P)-F(\complement{G}\strongproduct\complement{H},P)  \\
 & \ge \entropy(P)-F(\complement{G},P_G)-F(\complement{H},P_H)  \\
 & = \entropy(P)-\entropy(P_G)+F^*(G,P_G)-\entropy(P_H)+F^*(H,P_H)  \\
 & = F^*(G,P_G)+F^*(H,P_H)-\mutualinformation(G:H)_P.
\end{split}
\end{equation}

\begin{equation}
\begin{split}
F^*(G\costrongproduct H,P)
 & = \entropy(P)-F(\complement{G\costrongproduct H},P)  \\
 & = \entropy(P)-F(\complement{G}\strongproduct\complement{H},P)  \\
 & \le \entropy(P)-F(\complement{G},P_G)-F(\complement{H},P_H)+\mutualinformation(G:H)_P  \\
 & = \entropy(P)-\entropy(P_G)+F^*(G,P_G)-\entropy(P_H)+F^*(H,P_H)+\mutualinformation(G:H)_P  \\
 & = F^*(G,P_G)+F^*(H,P_H).
\end{split}
\end{equation}
\end{proof}

\bibliography{refs}{}

\newcommand{\etalchar}[1]{$^{#1}$}
\begin{thebibliography}{CMR{\etalchar{+}}14}

\bibitem[Aud07]{audenaert2007sharp}
Koenraad~MR Audenaert.
\newblock A sharp continuity estimate for the von {N}eumann entropy.
\newblock {\em Journal of Physics A: Mathematical and Theoretical},
  40(28):8127, 2007.
\newblock \href {http://arxiv.org/abs/quant-ph/0610146}
  {\path{arXiv:quant-ph/0610146}}, \href
  {http://dx.doi.org/10.1088/1751-8113/40/28/S18}
  {\path{doi:10.1088/1751-8113/40/28/S18}}.

\bibitem[BC18]{bukh2018fractional}
Boris Bukh and Christopher Cox.
\newblock On a fractional version of {H}aemers' bound.
\newblock 2018.
\newblock \href {http://arxiv.org/abs/1802.00476} {\path{arXiv:1802.00476}}.

\bibitem[Bla13]{blasiak2013graph}
Anna Blasiak.
\newblock {\em A graph-theoretic approach to network coding}.
\newblock PhD thesis, Cornell University, 2013.
\newblock URL: \url{https://hdl.handle.net/1813/34147}.

\bibitem[CK81]{csiszar1981capacity}
Imre Csisz{\'a}r and J{\'a}nos K{\"o}rner.
\newblock On the capacity of the arbitrarily varying channel for maximum
  probability of error.
\newblock {\em Zeitschrift f{\"u}r Wahrscheinlichkeitstheorie und verwandte
  Gebiete}, 57(1):87--101, 1981.
\newblock \href {http://dx.doi.org/10.1007/BF00533715}
  {\path{doi:10.1007/BF00533715}}.

\bibitem[CK11]{csiszar2011information}
Imre Csisz{\'a}r and J{\'a}nos K{\"o}rner.
\newblock {\em Information theory: coding theorems for discrete memoryless
  systems}.
\newblock Cambridge University Press, second edition, 2011.
\newblock \href {http://dx.doi.org/10.1017/CBO9780511921889}
  {\path{doi:10.1017/CBO9780511921889}}.

\bibitem[CKL{\etalchar{+}}90]{csiszar1990entropy}
Imre Csisz{\'a}r, J{\'a}nos K{\"o}rner, L{\'a}szl{\'o} Lov{\'a}sz, Katalin
  Marton, and G{\'a}bor Simonyi.
\newblock Entropy splitting for antiblocking corners and perfect graphs.
\newblock {\em Combinatorica}, 10(1):27--40, 1990.
\newblock \href {http://dx.doi.org/10.1007/BF02122693}
  {\path{doi:10.1007/BF02122693}}.

\bibitem[CMR{\etalchar{+}}14]{cubitt2014bounds}
Toby Cubitt, Laura Man{\v{c}}inska, David~E Roberson, Simone Severini, Dan
  Stahlke, and Andreas Winter.
\newblock Bounds on entanglement-assisted source-channel coding via the
  {L}ov{\'a}sz {$\vartheta$} number and its variants.
\newblock {\em IEEE Transactions on Information Theory}, 60(11):7330--7344,
  2014.
\newblock \href {http://arxiv.org/abs/1310.7120} {\path{arXiv:1310.7120}},
  \href {http://dx.doi.org/10.1109/TIT.2014.2349502}
  {\path{doi:10.1109/TIT.2014.2349502}}.

\bibitem[Fri17]{fritz2017resource}
Tobias Fritz.
\newblock Resource convertibility and ordered commutative monoids.
\newblock {\em Mathematical Structures in Computer Science}, 27(6):850--938,
  2017.
\newblock \href {http://arxiv.org/abs/1504.03661} {\path{arXiv:1504.03661}},
  \href {http://dx.doi.org/10.1017/S0960129515000444}
  {\path{doi:10.1017/S0960129515000444}}.

\bibitem[Ful71]{fulkerson1971blocking}
Delbert~Ray Fulkerson.
\newblock Blocking and anti-blocking pairs of polyhedra.
\newblock {\em Mathematical programming}, 1(1):168--194, 1971.
\newblock \href {http://dx.doi.org/10.1007/BF01584085}
  {\path{doi:10.1007/BF01584085}}.

\bibitem[GLS86]{grotschel1986relaxations}
Martin Gr{\"o}tschel, L{\'a}szl{\'o} Lov{\'a}sz, and Alexander Schrijver.
\newblock Relaxations of vertex packing.
\newblock {\em Journal of Combinatorial Theory, Series B}, 40(3):330--343,
  1986.
\newblock \href {http://dx.doi.org/10.1016/0095-8956(86)90087-0}
  {\path{doi:10.1016/0095-8956(86)90087-0}}.

\bibitem[Hae78]{haemers1978upper}
Willem Haemers.
\newblock An upper bound for the {S}hannon capacity of a graph.
\newblock In {\em Colloq. Math. Soc. J{\'a}nos Bolyai}, volume~25, pages
  267--272, 1978.

\bibitem[HTS18]{hu2018bound}
Sihuang Hu, Itzhak Tamo, and Ofer Shayevitz.
\newblock A bound on the {S}hannon capacity via a linear programming variation.
\newblock {\em SIAM Journal on Discrete Mathematics}, 32(3):2229--2241, 2018.
\newblock \href {http://arxiv.org/abs/1804.05529} {\path{arXiv:1804.05529}},
  \href {http://dx.doi.org/10.1137/17M115565X} {\path{doi:10.1137/17M115565X}}.

\bibitem[KL73]{korner1973two}
J{\'a}nos K{\"o}rner and Giuseppe Longo.
\newblock Two-step encoding for finite sources.
\newblock {\em IEEE Transactions on Information Theory}, 19(6):778--782, 1973.
\newblock \href {http://dx.doi.org/10.1109/TIT.1973.1055109}
  {\path{doi:10.1109/TIT.1973.1055109}}.

\bibitem[KM88]{korner1988graphs}
J{\'a}nos K{\"o}rner and Katalin Marton.
\newblock Graphs that split entropies.
\newblock {\em SIAM journal on discrete mathematics}, 1(1):71--79, 1988.
\newblock \href {http://dx.doi.org/10.1137/0401008}
  {\path{doi:10.1137/0401008}}.

\bibitem[Knu94]{knuth1994sandwich}
Donald~E Knuth.
\newblock The sandwich theorem.
\newblock {\em The Electronic Journal of Combinatorics}, 1(1):1, 1994.
\newblock \href {http://arxiv.org/abs/math/9312214}
  {\path{arXiv:math/9312214}}.

\bibitem[K{\"o}r73]{korner1973coding}
J{\'a}nos K{\"o}rner.
\newblock Coding of an information source having ambiguous alphabet and the
  entropy of graphs.
\newblock In {\em 6th Prague conference on information theory}, pages 411--425,
  1973.

\bibitem[K{\"o}r86]{korner1986fredman}
J{\'a}nos K{\"o}rner.
\newblock Fredman--{K}oml{\'o}s bounds and information theory.
\newblock {\em SIAM Journal on Algebraic Discrete Methods}, 7(4):560--570,
  1986.
\newblock \href {http://dx.doi.org/10.1137/0607062}
  {\path{doi:10.1137/0607062}}.

\bibitem[Lov72]{lovasz1972normal}
L{\'a}szl{\'o} Lov{\'a}sz.
\newblock Normal hypergraphs and the perfect graph conjecture.
\newblock {\em Discrete Mathematics}, 2(3):253--267, 1972.
\newblock \href {http://dx.doi.org/10.1016/0012-365X(72)90006-4}
  {\path{doi:10.1016/0012-365X(72)90006-4}}.

\bibitem[Lov79]{lovasz1979shannon}
L{\'a}szl{\'o} Lov{\'a}sz.
\newblock On the {S}hannon capacity of a graph.
\newblock {\em IEEE Transactions on Information theory}, 25(1):1--7, 1979.
\newblock \href {http://dx.doi.org/10.1109/TIT.1979.1055985}
  {\path{doi:10.1109/TIT.1979.1055985}}.

\bibitem[Mar93]{marton1993shannon}
Katalin Marton.
\newblock On the {S}hannon capacity of probabilistic graphs.
\newblock {\em Journal of Combinatorial Theory, Series B}, 57(2):183--195,
  1993.
\newblock \href {http://dx.doi.org/10.1006/jctb.1993.1015}
  {\path{doi:10.1006/jctb.1993.1015}}.

\bibitem[Pet07]{petz2007quantum}
D{\'e}nes Petz.
\newblock {\em Quantum information theory and quantum statistics}.
\newblock Springer, 2007.
\newblock \href {http://dx.doi.org/10.1007/978-3-540-74636-2}
  {\path{doi:10.1007/978-3-540-74636-2}}.

\bibitem[Sab61]{sabidussi1961graph}
Gert Sabidussi.
\newblock Graph derivatives.
\newblock {\em Mathematische Zeitschrift}, 76(1):385--401, 1961.
\newblock \href {http://dx.doi.org/10.1007/BF01210984}
  {\path{doi:10.1007/BF01210984}}.

\bibitem[Sha56]{shannon1956zero}
Claude Shannon.
\newblock The zero error capacity of a noisy channel.
\newblock {\em IRE Transactions on Information Theory}, 2(3):8--19, 1956.
\newblock \href {http://dx.doi.org/10.1109/TIT.1956.1056798}
  {\path{doi:10.1109/TIT.1956.1056798}}.

\bibitem[Str88]{strassen1988asymptotic}
Volker Strassen.
\newblock The asymptotic spectrum of tensors.
\newblock {\em Journal f{\"u}r die reine und angewandte Mathematik},
  384:102--152, 1988.
\newblock \href {http://dx.doi.org/10.1515/crll.1988.384.102}
  {\path{doi:10.1515/crll.1988.384.102}}.

\bibitem[SU97]{scheinerman1997fractional}
Edward~R Scheinerman and Daniel~H Ullman.
\newblock Fractional graph theory: A rational approach to the theory of graphs,
  1997.

\bibitem[Wit76]{witsenhausen1976zero}
Hans Witsenhausen.
\newblock The zero-error side information problem and chromatic numbers
  (corresp.).
\newblock {\em IEEE Transactions on Information Theory}, 22(5):592--593, 1976.
\newblock \href {http://dx.doi.org/10.1109/TIT.1976.1055607}
  {\path{doi:10.1109/TIT.1976.1055607}}.

\bibitem[Zui18]{zuiddam2018asymptotic}
Jeroen Zuiddam.
\newblock The asymptotic spectrum of graphs and the {S}hannon capacity.
\newblock 2018.
\newblock \href {http://arxiv.org/abs/1807.00169} {\path{arXiv:1807.00169}}.

\end{thebibliography}

\end{document}